\documentclass[a4paper,11pt]{amsart}
\usepackage[all,arc]{xy}
\usepackage[all]{xy}
\usepackage{latexsym}
\usepackage{amssymb}
\usepackage{amsmath}
\usepackage{amsthm}

\newtheorem{definition}{Definition}[section]
\newtheorem{lemma}[definition]{Lemma}
\newtheorem{prop}[definition]{Proposition}
\newtheorem{theorem}[definition]{Theorem}
\newtheorem{cor}[definition]{Corollary}

\newtheorem{remark}[definition]{Remark}
\theoremstyle{definition}
\newtheorem{exam}[definition]{Example}
\newtheorem{fact}[definition]{Fact}

\newcommand*{\AR}{\mathop{{\rm AR}}\nolimits}

\newcommand*{\End}{\mathop{{\rm End}}\nolimits}

\newcommand*{\Ext}{\mathop{{\rm Ext}}\nolimits}
\newcommand*{\Hom}{\mathop{{\rm Hom}}\nolimits}

\newcommand*{\Latt}{\mathop{{\rm Latt}}\nolimits}

\newcommand*{\Mod}{\mathop{{\rm Mod}}\nolimits}

\newcommand*{\pp}{\mathop{{\rm pp}}\nolimits}

\newcommand*{\Supp}{\mathop{{\rm Supp}}\nolimits}
\newcommand*{\Tf}{\mathop{{\rm Tf}}\nolimits}

\newcommand*{\Tor}{\mathop{{\rm Tor}}\nolimits}

\newcommand{\Zg}{\mathop{{\rm Zg}}\nolimits}

\newcommand*{\bsm}{\left(\begin{smallmatrix}}
\newcommand*{\esm}{\end{smallmatrix}\right)}
\newcommand*{\bp}{\begin{pmatrix}}
\newcommand*{\ep}{\end{pmatrix}}

\newcommand*{\xr}{\xrightarrow}

\newcommand*{\C}{\mathbb{C}}
\newcommand*{\N}{\mathbb{N}}
\newcommand*{\Q}{\mathbb{Q}}

\newcommand*{\Z}{\mathbb{Z}}

\renewcommand*{\phi}{\varphi}

\textwidth=16cm
\hoffset=-1.5cm
\textheight=23cm
\voffset=-1.5cm

\date{}

\begin{document}

\footskip=30pt

\title[]{The torsionfree part of the Ziegler spectrum of orders over Dedekind domains}

\author[]{Lorna Gregory}

\address[L.~Gregory]{University of Camerino, School of Science and Technologies,
Division of Mathematics, Via Madonna delle Carceri 9, 62032 Camerino, Italy}

\email{lorna.gregory@gmail.com}

\author[]{Sonia L'Innocente}

\address[S.~L'Innocente]{University of Camerino, School of Science and Technologies,
Division of Mathematics, Via Madonna delle Carceri 9, 62032 Camerino, Italy}

\email{sonia.linnocente@unicam.it}

\author[]{Carlo Toffalori}

\address[C.~Toffalori]{University of Camerino, School of Science and Technologies,
Division of Mathematics, Via Madonna delle Carceri 9, 62032 Camerino, Italy}

\email{carlo.toffalori@unicam.it}

\thanks{The second and third authors thank the Italian GNSAGA-INdAM for its support.}

\subjclass[2010]{03C60 (primary), 03C98, 16G30, 16H20}

\keywords{Order over a Dedekind domain, Lattice, Ziegler spectrum}

\begin{abstract}
We study the $R$-torsionfree part of the Ziegler spectrum of an order $\Lambda$ over a
Dedekind domain $R$. We underline and comment on the role of lattices over
$\Lambda$. We describe the torsionfree part of the spectrum when $\Lambda$
is of finite lattice representation type.
\end{abstract}

\maketitle

\pagestyle{plain}

\vspace{3mm}

\section{Introduction}\label{S-intro}

In his posthumous paper \cite{Pu} Gena Puninski made substantial progress in the description
of the Cohen-Macaulay part of the Ziegler spectrum over Cohen-Macaulay rings.

Gena also raised a similar question for torsionfree modules over orders. In fact this topic
has been investigated in just few cases.
The most advanced (see \cite{PT11}) deals with the integral group ring $\mathbb{Z} G$, where
$G$ is Klein's four group; note that $\mathbb{Z} G$ is of infinite lattice
representation type. On the other hand a general theoretical analysis over group rings
$RG$, where $R$ is a Dedekind domain of characteristic 0 and $G$ is a finite group, had
been previously developed in \cite{MPT}. Motivations came from the interest in the model theory of abelian-by-finite groups. Recall that a group $H$ is said to be abelian-by-finite if and only if $H$ admits
an abelian normal subgroup $N$ of finite index. Let $G$ denote the quotient group $H/N$. So $N$ inherits a natural structure of module over the group ring $\Z G$ and most model theory of $H$ as a group is given by that of $N$ as a module (see \cite{MPT0}).

In this note we plan to extend the approach of \cite{MPT} to orders over Dedekind domains.
Thus let us first introduce this setting. We start with a Dedekind domain $R$ that is not a field.
Let $Q$ be the field of fractions of $R$. Now let $A$ be a finite dimensional $Q$-algebra. We will sometimes assume $A$ {\sl semisimple},
and even {\sl separable} (i.e. $A$ remains semisimple when extending scalars).
An {\sl $R$-order} in $A$ is a subring $\Lambda$ of $A$ such that the centre of $\Lambda$
contains $R$, $\Lambda$ is finitely generated as an $R$-module and $Q \cdot \Lambda = A$.
For instance the group ring  $RG$, with $G$ a finite group, is an $R$-order in the
group algebra $A = Q G$.

We are interested in (say right) modules over such an order $\Lambda$, in particular
in $R$-torsionfree $\Lambda$-modules. Recall that
a $\Lambda$-module $M$ is {\sl $R$-torsionfree} if for all $0 \neq m \in M$ and $0 \neq r \in R$
we have $mr \neq 0$. Finitely generated $R$-torsionfree $\Lambda$-modules
are known as {\sl $\Lambda$-lattices}. Over a Dedekind domain $R$, $\Lambda$-lattices can
be equivalently introduced as $\Lambda$-modules finitely generated and projective over $R$,
or also as direct summands (still over $R$) of $R^n$ for some positive integer $n$.

Observe that, if $M$ is any $\Lambda$-module, then $\Tor  M \, : = \, \{ m \in M \, \mid
\, mr = 0$ for some $r \in R \setminus \{ 0 \} \}$ is a $\Lambda$-submodule of $M$
and the quotient module $M / \Tor  M$ is $R$-torsionfree.

Let us fix some further notation. For $\Lambda$ an order over a Dedekind domain $R$,
\begin{itemize}
\item $\Tf_\Lambda$ is the category of all $R$-torsionfree (right) $\Lambda$-modules,
\item $\Latt_\Lambda$ is the category of (right) $\Lambda$-lattices.
\end{itemize}

Moreover $L_\Lambda$ is the first order language of $\Lambda$-modules, and
in this language $T^{tf}_\Lambda$ is the first order theory of $R$-torsionfree
$\Lambda$-modules, so of $\Tf_\Lambda$.

Note that $\Tf_\Lambda$ is the smallest {\sl definable} subcategory of the
category of all $\Lambda$-modules containing $\Latt_\Lambda$.

For every positive integer $n$, $\pp_\Lambda^n$ denotes the lattice of pp-formulas
with $n$ free variables of $L_\Lambda$ (warning: here the word {\sl lattice} has a different
meaning, that of an ordered structure, see below). In detail, $\pp_\Lambda^n$
is the quotient set of these pp-formulas with respect to the logical equivalence
relation (in the theory of $\Lambda$-modules). The lattice structure is given by the
partial order relation determined by logical implication (modulo the same theory).
Then meet corresponds to the conjunction of pp-formulas,
and join to their sum $+$. If one identifies pp-formulas in $n$ variables equivalent
in the first order theory of some given $\Lambda$-module $M$, then one
forms another lattice $\pp^n_\Lambda (M)$ -- a quotient lattice of $\pp^n_\Lambda$.
The same can be done starting from a class of $\Lambda$-modules instead of
a single $M$. For instance,
one builds in this way $\pp^n_\Lambda (\Tf_\Lambda)$.

We will denote the binary relation in these lattices by $\leq$ (with the usual meaning
for $<$). When necessary,
a subscript will specify to which lattice we refer. For instance, we write
$\leq_{\pp_\Lambda^1 (M)}$ when dealing with pp-formulas in 1 free variable
with respect to the first order theory of a module $M$. Likewise $[ \, \, , \, \, ]$
will denote a closed interval in a lattice, with possible use of subscripts to say
which lattice we deal with, as before.
Similar conventions will regard open or half closed intervals.

The m-dimension of these lattices $L = \pp_\Lambda^n, \pp_\Lambda^n (\Tf_\Lambda), \ldots$
is defined as follows, see \cite{Preb1} and \cite{Preb2} for details. Construct a sequence of
lattices $L^\alpha$ (with $\alpha$ an ordinal) collapsing at each successor step intervals of
finite length. For instance, in the basic step two pp-formulas $\phi$ and $\psi$ are identified if
and only if the closed interval $[\phi \land \psi, \phi + \psi]$ is of finite length. Then the
m-dimension of $L$, $\text{m-dim} \, L$, is defined as the smallest ordinal $\alpha$ such
that $L^\alpha$ is the one-point lattice, if such an ordinal exists, and $\infty$ (or undefined) otherwise.

Let us come back to illustrate the aim of the paper. As said, we consider $R$-torsionfree modules
over
an order $\Lambda$ over a Dedekind domain $R$. Let $\Zg_\Lambda$ denote the whole
(right) Ziegler spectrum of $\Lambda$, that is, the topological space of (isomorphism classes
of) indecomposable pure injective $\Lambda$-modules. A basis of open sets of the
topology is given by $(\phi / \psi) = \{ N \in \Zg_\Lambda \, : \, \phi \land \psi
<_{\pp^1_\Lambda (N)} \phi \}$ where $\phi$ and $\psi$ range over $\pp^1_\Lambda$.
We are interested in the subset of $\Zg_\Lambda$ formed by
$R$-torsionfree indecomposable pure injective $\Lambda$-modules.
Notice that this is a closed set, as the complement of the union of $(xr = 0 / x = 0)$
where $r$ ranges over the non zero elements of $R$. Let $\Zg_\Lambda^{tf}$ denote it.
Observe that since $\Zg_\Lambda$ is compact and  $\Zg_\Lambda^{tf}$ is a closed subset of $\Zg_\Lambda$, $\Zg_\Lambda^{tf}$ is also compact.

One may wonder what is the role of $\Lambda$-lattices in this framework,
for instance if indecomposable $\Lambda$-lattices are pure injective, so
points of $\Zg_\Lambda^{tf}$. We cannot expect that in general, but we will see
in the next section that the answer is
positive at least over complete discrete valuation domains. Apart from this, we will
also discuss the relevance of lattices in the $R$-torsionfree part of the spectrum, just
as \cite{MPT} did over group rings.

Here is a more detailed plan of this paper. In $\S$ \ref{S-latt} we prove some first results
on lattices, and above all that, when $R$ is a complete discrete valuation domain and $A$ is separable,
they are isolated points of $\Zg_\Lambda^{tf}$, dense in the whole space
$\Zg_\Lambda^{tf}$. In $\S$ \ref{S-zieg} we provide a description
of the torsionfree part of the Ziegler spectrum of an order $\Lambda$ over
a Dedekind domain $R$ in a semisimple $Q$-algebra, extending that over
group rings in \cite{MPT}. We also investigate the m-dimension of $\pp_\Lambda^1(\Tf_\Lambda)$ in that section.
Applications of the (classical) Maranda theorem to our setting will be treated in
$\S$ 	\ref{S-marapp}. The final section \ref{S-finite} considers
orders of finite lattice representation type and provides a complete
description of their Ziegler spectrum, confirming a conjecture of Gena
Puninski. As an application, it is shown
that the theory of integral group rings $\Z \, G$ torsionfree over
$\Z$ (with $G$ a cyclic group of order $p$ or $p^2$ for some prime $p$)
is decidable, which positively answers questions in \cite{MPT0}.

We assume some familiarity with model theory of modules, as treated in
\cite{Preb1}, \cite{Preb2} and \cite{Zi}.

Finally let us call again the reader's attention to the fact that, as this introduction
already witnesses, the word {\sl lattice} denotes in this paper
two different concepts: lattice as a module, and as a partially ordered set.
Indeed the same is true of {\sl order}, that can be meant in the usual sense
but also as a ring.
We hope this coincidence will not cause any misunderstanding and
the meaning of any occurrence of {\sl lattice} or {\sl order} will always be clear.

\section{The role of lattices}\label{S-latt}

We mainly devote this section to some first results on lattices. We keep $R$,
$\Lambda$, and so on, in agreement with the introduction.

Since $R$ is hereditary and noetherian, $\Lambda$-lattices are closed under
submodules. On the contrary, quotients of lattices need not be lattices, but
the category of $\Lambda$-lattices does have pseudo-kernels.


\begin{fact}\label{oplus}
{\sl Every lattice $M$ over an $R$-order $\Lambda$ decomposes as a finite direct
sum of indecomposable lattices.}
\end{fact}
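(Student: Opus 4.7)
The plan is to argue by straightforward induction on the $R$-rank of $M$, where by the $R$-rank I mean the $Q$-dimension of $Q \otimes_R M$ (equivalently, since $R$ is Dedekind and $M$ is finitely generated and $R$-torsionfree, the rank of the projective $R$-module $M$). The two ingredients we need are already available in the excerpt: first, the paper reminds us at the start of \S \ref{S-latt} that $\Lambda$-lattices are closed under submodules (this is where the fact that $R$ is hereditary and noetherian is used, via the fact that $\Lambda$ is noetherian and a submodule of a finitely generated $R$-projective module is again finitely generated and $R$-projective). Second, $R$-rank is additive on direct sums, non-negative, and strictly positive on nonzero lattices (the latter because any nonzero $R$-torsionfree module has an element generating a free rank-one $R$-submodule).

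With these in place, the induction is essentially forced. If $M$ has $R$-rank $0$, then $M = 0$ and there is nothing to prove (an empty direct sum). If $M$ is indecomposable we are already done. Otherwise write $M = M_1 \oplus M_2$ with both $M_i$ nonzero $\Lambda$-submodules. By the closure of $\Latt_\Lambda$ under submodules, each $M_i$ is itself a $\Lambda$-lattice, and by additivity and positivity of rank, each has $R$-rank strictly smaller than that of $M$. The inductive hypothesis applied to $M_1$ and to $M_2$ yields finite decompositions into indecomposable lattices, and their concatenation gives the desired decomposition of $M$.

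There is no real obstacle here; the only point worth underlining, and the reason the argument works at all, is that the decomposition $M = M_1 \oplus M_2$ (a priori a $\Lambda$-module decomposition) automatically stays inside $\Latt_\Lambda$, which is what lets the induction run. Note that this argument gives only existence of a decomposition into indecomposables; uniqueness (Krull--Schmidt) would require the additional input that the endomorphism ring of an indecomposable lattice is local, which is known but not needed for the stated fact.
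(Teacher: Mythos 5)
Your proof is correct and follows essentially the same route as the paper's: the paper likewise observes that tensoring a nontrivial decomposition $M = L \oplus N$ with $Q$ strictly drops the $Q$-dimension on each summand, and lets the (implicit) induction on $\dim_Q QM$ do the rest. You merely spell out the induction and the closure of $\Latt_\Lambda$ under direct summands more explicitly than the paper does.
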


\begin{proof}
Let $M = L \oplus N$. As tensor products preserve direct sums,
$QM$ decomposes as $QL \oplus QN$. Thus if both $L$ and $N$ are non zero
then $\dim QL, \, \dim QN < \dim QM$.
\end{proof}

This decomposition may not be unique. In other words the category of lattices
over an order may not be Krull-Schmidt (see \cite[p. 768]{CR}. But this is true
over complete discrete valuation domains (see \cite[(30.6), p. 620]{CR}).

\begin{prop}\label{lattpi}
Let $R$ be a complete discrete valuation domain and $\Lambda$ be an $R$-order.
Then every $\Lambda$-lattice $L$ is pure injective and the set of indecomposable $\Lambda$-lattices is dense in $\Zg_\Lambda^{tf}$.
\end{prop}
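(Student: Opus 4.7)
The plan is to treat the two claims in turn. First I would establish pure injectivity of every lattice via an algebraic compactness argument; I then combine this with Fact \ref{oplus} and the fact, noted in the introduction, that $\Tf_\Lambda$ is the smallest definable subcategory containing $\Latt_\Lambda$, to derive density.

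For the first claim, the key observation is that $R$ being a complete discrete valuation domain and $L$ being finitely generated and $R$-torsionfree forces $L \cong R^n$ as an $R$-module, with $n = \dim_Q (Q \otimes_R L)$. Completeness of $R$ in its $\pi$-adic topology makes $R$ algebraically compact as an $R$-module, and this property is inherited by $R^n \cong L$. To lift algebraic compactness from $R$ to $\Lambda$, I would take any finitely satisfiable system of $\Lambda$-linear equations in finitely many variables $x_1, \dots, x_k$ with parameters from $L$, and, on expressing each $x_j$ as an $n$-tuple of $R$-unknowns via $L \cong R^n$, rewrite it as a finitely satisfiable system of $R$-linear equations in $nk$ unknowns with parameters still in $L$. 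The $R$-algebraic compactness of $L$ then produces a solution, which, read back in the original coordinates, solves the $\Lambda$-system; hence $L$ is $\Lambda$-pure-injective.

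For density, let $(\phi / \psi)$ be a non-empty basic open subset of $\Zg_\Lambda^{tf}$. The corresponding pp-pair is non-closed on $\Tf_\Lambda$, and since $\Tf_\Lambda$ is the definable subcategory generated by $\Latt_\Lambda$, it is also non-closed on $\Latt_\Lambda$: some lattice $L$ carries an element $a$ with $L \models \phi(a) \wedge \neg \psi(a)$. Invoking Fact \ref{oplus}, decompose $L = L_1 \oplus \cdots \oplus L_s$ into indecomposable lattices and write $a = a_1 + \cdots + a_s$ accordingly. Since pp-formulas commute with finite direct sums, $L \models \phi(a)$ forces each $L_i \models \phi(a_i)$, while $L \not\models \psi(a)$ forces some $L_j \not\models \psi(a_j)$. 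Hence the indecomposable lattice $L_j$ lies in $(\phi/\psi)$, and by the first claim it is pure injective, giving the desired point of $\Zg_\Lambda^{tf} \cap (\phi/\psi)$.

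I expect the main obstacle to be the transfer of pure injectivity from the $R$-structure to the $\Lambda$-structure in the first step: it relies crucially both on $L$ being finitely generated as an $R$-module (so that the $\Lambda$-linear to $R$-linear encoding stays finite-variable) and on $R$ being complete (so that $R$ is itself algebraically compact). Once this reduction is secure, the density argument is essentially formal.
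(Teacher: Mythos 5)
Your pure-injectivity argument takes a genuinely different route from the paper's. The paper passes through \emph{linear} compactness: a complete discrete valuation domain $R$ is linearly compact over itself, linear compactness is preserved under extensions so $R^n \cong L$ is linearly compact as an $R$-module, and since every pp-definable subgroup of $L$ as a $\Lambda$-module is an $R$-submodule, algebraic compactness of $L$ over $\Lambda$ follows. Your direct encoding of $\Lambda$-linear systems over $L$ as $R$-linear systems over $R^n$ is a valid alternative in principle, but as written there is a gap: you restrict to systems in \emph{finitely many} variables $x_1,\dots,x_k$, whereas algebraic compactness requires solving every finitely satisfiable system with parameters in $L$, including systems in infinitely many variables (the one-free-variable pp-type characterisation of pure injectivity does not rescue this, since the existential witnesses of the pp-formulas in the type unfold into an infinite family of bound unknowns). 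The repair is immediate --- your encoding via $L \cong R^n$ and $\Lambda \hookrightarrow \mathrm{Mat}_n(R)$ sends a $\Lambda$-system in any index set of variables to an $R$-system with $n$ times as many, and the $R$-algebraic compactness of $R^n$ applies unchanged --- but the restriction to $k$ finite must be dropped.

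Your density argument is correct and is essentially the paper's, repackaged: the paper states directly that an $R$-torsionfree module is a direct limit of its finitely generated submodules (which are lattices) and hence lies in the closure of the set of lattices, while you cite the equivalent fact that $\Tf_\Lambda$ is the definable subcategory generated by $\Latt_\Lambda$; Fact~\ref{oplus}, together with pp-formulas commuting with finite direct sums, then places an indecomposable lattice in any prescribed nonempty basic open set, just as you argue.
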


\begin{proof} 

Since $R$ is a discrete valuation domain and $L$ is finitely generated and torsionfree over $R$, as an $R$-module, $L$ is isomorphic to $R^n$. Since $R$ is complete, it is linearly compact as a module over itself, see \cite[Sect. 4.2.2.]{Preb2} for the definition of linear compactness. Since the class of linearly compact modules is closed under extensions \cite[4.2.10]{Preb2}, $R^n$ is linearly compact. Since all pp-definable subgroups of $L$ as a $\Lambda$-module are $R$-submodules, $L$ is algebraically compact over $\Lambda$.  


If $M$ is an $R$-torsionfree $\Lambda$-module, then it is a direct limit of its finitely
generated submodules, which are lattices. Then $M$ is in the closure of these lattices.

\end{proof}

When $Q\Lambda$ is separable and $R$ is complete, the category of lattices has almost split sequences
(see \cite{RS76}, \cite{Rog95}, \cite{AS82}). We will use this to show that every indecomposable $\Lambda$-lattice is isolated.

The following result may have its own interest and indeed will be used also later.

Let $\phi \in \pp_{\Lambda}^n$ and $(M, \mathbf{m})$, $\mathbf{m} \in M^n$,
a free realisation of $\phi$ \cite[1.2.2 p. 23]{Preb2}.
Look at the pp-type of $\mathbf{m}+\Tor M$ in $M / \Tor M$ and take a pp-formula
$\overline{\phi} \in \pp_{\Lambda}^n$ generating this pp-type.

\begin{lemma}\label{overline}
The map $\phi \mapsto \overline{\phi}$ defines a $+$-semi-lattice
homomorphism from $\pp_{\Lambda}^n$ to $\pp_{\Lambda}^n$ such that $\overline{\phi} \leq \phi$
for every $\phi$.

Moreover, if $N$ is an $R$-torsionfree $\Lambda$-module then $\phi(N)=\overline{\phi}(N)$.

The partially ordered set $\{\overline{\phi} \, \mid \, \phi\in \pp_{\Lambda}^n\}$ is isomorphic to $\pp_\Lambda^n(\Tf_\Lambda)$ and hence is a lattice.
\end{lemma}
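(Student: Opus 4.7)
My plan is to work directly from the free-realization definition of $\overline{\phi}$. First I would check well-definedness. Given a free realization $(M, \mathbf{m})$ of $\phi$, the module $M$ is finitely presented, and since $\Lambda$ is finitely generated over the noetherian ring $R$ it is right noetherian; hence $\Tor M$ is a finitely generated $\Lambda$-submodule of $M$ and $M/\Tor M$ is again finitely presented. Consequently the pp-type of $\mathbf{m} + \Tor M$ in $M/\Tor M$ is generated by a single pp-formula, yielding $\overline{\phi}$. Independence from the choice of free realization is standard: any two free realizations of $\phi$ admit $\Lambda$-homomorphisms in each direction matching the distinguished tuples, and these descend to the $R$-torsionfree quotients because $\Lambda$-maps send torsion to torsion. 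The inequality $\overline{\phi} \leq \phi$ is immediate: the canonical surjection $M \to M/\Tor M$ preserves the pp-formula $\phi$, so $\mathbf{m} + \Tor M \in \phi(M/\Tor M)$ and $\phi$ belongs to the pp-type generated by $\overline{\phi}$.

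For the $+$-semi-lattice property and the second assertion I would argue in parallel. If $(M, \mathbf{m})$ and $(N, \mathbf{n})$ freely realize $\phi$ and $\psi$, then $(M \oplus N, (\mathbf{m}, \mathbf{n}))$ freely realizes $\phi + \psi$; passing to torsion quotients gives $(M \oplus N)/\Tor(M \oplus N) = M/\Tor M \oplus N/\Tor N$, whose distinguished tuple has pp-type generated by $\overline{\phi} + \overline{\psi}$, so $\overline{\phi + \psi} = \overline{\phi} + \overline{\psi}$. For the inclusion $\phi(N) \subseteq \overline{\phi}(N)$ when $N$ is torsionfree, given $\mathbf{a} \in \phi(N)$ freeness provides $f : M \to N$ with $f(\mathbf{m}) = \mathbf{a}$; since $N$ is torsionfree, $f$ kills $\Tor M$ and factors as $\overline{f} : M/\Tor M \to N$ sending $\mathbf{m} + \Tor M$ to $\mathbf{a}$. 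As $\mathbf{m} + \Tor M$ satisfies $\overline{\phi}$ and $\overline{f}$ preserves pp-formulas, $\mathbf{a}$ satisfies $\overline{\phi}$ in $N$; the reverse inclusion is the first part.

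The decisive point for the third assertion is that $(M/\Tor M, \mathbf{m} + \Tor M)$ is itself a free realization of $\overline{\phi}$, and crucially it is torsionfree. By part two, composition of $\phi \mapsto \overline{\phi}$ with the quotient map $q : \pp_\Lambda^n \to \pp_\Lambda^n(\Tf_\Lambda)$ agrees with $q$, so $q$ restricted to $\{\overline{\phi} \mid \phi \in \pp_\Lambda^n\}$ is surjective. To obtain both injectivity and order preservation in one stroke I would prove that $\overline{\phi_1} \leq \overline{\phi_2}$ in $\pp_\Lambda^n$ if and only if $\phi_1(L) \subseteq \phi_2(L)$ for every torsionfree $L$: the forward direction is part two, and for the reverse one evaluates at the torsionfree free realization $M_1/\Tor M_1$ of $\overline{\phi_1}$, where $\mathbf{m}_1 + \Tor M_1 \in \phi_1(M_1/\Tor M_1) \subseteq \phi_2(M_1/\Tor M_1) = \overline{\phi_2}(M_1/\Tor M_1)$ by part two, forcing $\overline{\phi_2}$ into the pp-type generated by $\overline{\phi_1}$ and hence $\overline{\phi_1} \leq \overline{\phi_2}$ in the full lattice $\pp_\Lambda^n$. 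I expect the main obstacle to be exactly this promotion of equivalence on torsionfree modules to equivalence in all of $\pp_\Lambda^n$; it is resolved precisely by the built-in torsionfree free realization that every $\overline{\phi}$ carries.
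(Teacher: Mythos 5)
Your proof is correct and follows the paper's approach: free realizations, the canonical surjection onto the torsion quotient, and factoring homomorphisms through $\Tor M$, exactly as the paper does for $\overline{\phi}\leq\phi$, for the $+$-semilattice property, and for $\phi(N)=\overline{\phi}(N)$. You go further than the paper by actually proving the third assertion (the paper's proof stops after $\phi(N)=\overline{\phi}(N)$ and leaves the poset isomorphism to the reader); your key device — that $(M/\Tor M,\ \mathbf{m}+\Tor M)$ is a torsionfree free realization of $\overline{\phi}$, so that $\overline{\phi_1}\leq\overline{\phi_2}$ in $\pp_\Lambda^n$ if and only if $\phi_1(L)\subseteq\phi_2(L)$ for every torsionfree $L$ — is exactly the right one and cleanly completes the argument.
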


\begin{proof}
Let $(M, \mathbf{m})$ be a free realisation of
$\phi \in \pp_{\Lambda}^n$. Since there is a homomorphism from $M$ to $M / \Tor M$
sending $\mathbf{m}$ to $\mathbf{m}+\Tor M$, $\overline{\phi}\leq \phi$.

Suppose that $\psi \leq \phi$ in $\pp_{\Lambda}^n$. Let $(N, \mathbf{n})$
be a free realisation of $\psi$. Since $\psi\leq \phi$ there is a homomorphism
$f : M \rightarrow N$ with $f(\mathbf{m}) = \mathbf{n}$. Let $\overline{f} : M / \Tor M
\rightarrow N / \Tor N$ be the homomorphism induced by $f$. Then $\overline{f}
(\mathbf{m} + \Tor M) = \mathbf{n} + \Tor N$.
Thus $\overline{\psi} \leq \overline{\phi}$. This also shows that the map sending $\phi$
to $\overline{\phi}$ is well-defined.

We now just have to observe that for all $\phi,\psi \in \pp_\Lambda^n$, $\overline{\phi+\psi}=\overline{\phi}+\overline{\psi}$. This is true because
if $(M, \mathbf{m})$, $(N, \mathbf{n})$ are free realisations of $\phi$, $\psi$
respectively, then, see \cite[1.2.27]{Preb2}, $(M \oplus N, \mathbf{m} + \mathbf{n})$ is a free realisation
of $\phi+\psi$ and $\Tor (M \oplus N) = \Tor M \oplus \Tor N$.

Now suppose that $\phi \in \pp_\Lambda^n$ is freely realised by $(M, \mathbf{m})$
and $N$ is an $R$-torsionfree $\Lambda$-module. Suppose $\mathbf{n}\in \phi(N)$.
There exists $f : M \rightarrow N$ such that $f(\mathbf{m})=\mathbf{n}$. Since $N$
is $R$-torsionfree, $\Tor M\subseteq \ker  f$. Thus the homomorphism $\overline{f} :
M / \Tor M \rightarrow N$ induced by $f$ satisfies $\overline{f}(\mathbf{m} + \Tor M) =
\mathbf{n}$. Hence $\mathbf{n}\in \overline{\phi}(N)$.

\end{proof}

We now provide a detailed proof that the indecomposable $\Lambda$-lattices are isolated in $\Zg_{\Lambda}^{tf}$, when $R$ is complete and $Q\Lambda$ is separable, following that of the analogous result for Artin algebras.

\begin{lemma}\label{lattiso}
Let $R$ be a complete discrete valuation domain with field of fractions $Q$,
$\Lambda$ be an $R$-order in a finite dimensional separable $Q$-algebra $A$.
Then all indecomposable $\Lambda$-lattices are isolated in $\Zg_\Lambda^{tf}$.
\end{lemma}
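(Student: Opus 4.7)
The plan is to adapt the classical argument for Artin algebras (due to Auslander; see Prest \cite{Preb2}, Theorem 5.3.33), exploiting the almost split sequences available in $\Latt_\Lambda$ under the present hypotheses (\cite{AS82}, \cite{RS76}, \cite{Rog95}). For each indecomposable $\Lambda$-lattice $L$, the aim is to construct a pp-pair $(\phi/\psi)$ that isolates $L$ in $\Zg_\Lambda^{tf}$.

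Assume first that $L$ is non-projective in $\Latt_\Lambda$, and take an almost split sequence
$$0 \to L \xrightarrow{\alpha} B \xrightarrow{\beta} C \to 0$$
starting at $L$, with $\alpha$ left almost split. Since $\Lambda$ is noetherian and $L$ is finitely generated, $L$ is finitely presented. Fix a tuple $\bar{\ell}$ of $\Lambda$-module generators of $L$, and let $\phi(\bar{x}) \in \pp_\Lambda^n$ be freely realised by $(L, \bar{\ell})$ while $\psi(\bar{x}) \in \pp_\Lambda^n$ is freely realised by $(B, \alpha(\bar{\ell}))$. The map $\alpha$ places $\alpha(\bar{\ell})$ in $\phi(B)$, so $\psi \leq \phi$ by the minimality of $\psi$; and the inequality is strict, for $\bar{\ell} \in \psi(L)$ would produce $\rho : B \to L$ satisfying $\rho \circ \alpha = \mathrm{id}_L$ (as $\bar{\ell}$ generates $L$), contradicting the non-splitness of $\alpha$. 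Hence $\bar{\ell} \in \phi(L) \setminus \psi(L)$, witnessing $L \in (\phi/\psi)$.

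For the isolation step, I would take $N \in (\phi/\psi) \cap \Zg_\Lambda^{tf}$, pick $\bar{n} \in \phi(N) \setminus \psi(N)$, and apply the free realisation of $\phi$ to obtain $h : L \to N$ with $h(\bar{\ell}) = \bar{n}$. Because $L$ is finitely presented and $N$ is $R$-torsionfree, $h$ factors through a finitely generated (hence lattice) submodule $N_0 \subseteq N$ as $L \xrightarrow{h_0} N_0 \hookrightarrow N$. If $h_0$ were not a split monomorphism in $\Latt_\Lambda$, the left almost split property of $\alpha$ would yield $h_0 = g \circ \alpha$ with $g : B \to N_0$; then $B \to N_0 \hookrightarrow N$ would send $\alpha(\bar{\ell})$ to $\bar{n}$, forcing $\bar{n} \in \psi(N)$, a contradiction. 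So $h_0$ is a split monomorphism. Applying the same factorisation argument to an arbitrary pp-formula $\chi$ strictly below $\phi$ shows $\chi \leq \psi$ in the lattice $\pp_\Lambda^n(\Tf_\Lambda)$, so that $(\phi, \psi)$ is a minimal pair there; this forces $\bar{n}$ to share the pp-type of $\bar{\ell}$, whence the induced map $L \to N$ is a pure embedding. Since $L$ is pure-injective by Proposition \ref{lattpi} and $N$ is an indecomposable pure-injective, $N \cong L$, as required.

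The projective case, in which the almost split sequence starting at $L$ need not exist, has to be handled separately: an indecomposable projective lattice $L = \Lambda e$ (with $e$ a primitive idempotent) is isolated by a pp-pair involving the idempotent relation $\bar{x}e = \bar{x}$ and a suitable proper sub-pp-formula, in the usual manner. The principal technical obstacle is the minimality claim: transferring the left almost split property of $\alpha$, formulated in $\Latt_\Lambda$, into a statement about pp-formulas valid for arbitrary $R$-torsionfree $\Lambda$-modules requires careful factoring of maps from $L$ through lattice submodules of the ambient pure-injective, assisted by Lemma \ref{overline} to keep the argument within $\pp_\Lambda^n(\Tf_\Lambda)$.
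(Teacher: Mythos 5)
Your overall strategy — using almost split structure to produce a pp-pair $(\phi/\psi)$ from a generating tuple of $L$ and its image, then showing $(\phi/\psi)$ is $\Tf_\Lambda$-minimal — is the same as the paper's, and your minimality argument (passing via Lemma~\ref{overline} to a free realisation in a lattice) is the right one. However, your case split contains a genuine error. An almost split sequence $0\to L\to B\to C\to 0$ \emph{starting} at $L$ exists in $\Latt_\Lambda$ precisely when $L$ is non-\emph{injective} in that category; non-projectivity is the condition for such a sequence to \emph{end} at $L$. So the exceptional case you must treat separately consists of the relatively injective indecomposable lattices, not the projective ones, and your idempotent sketch for "$L=\Lambda e$" addresses the wrong set of modules (besides being left unargued). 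Relatively injective lattices need not be projective unless $\Lambda$ is Gorenstein, so the gap is real.

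The paper avoids the case split altogether by invoking (from \cite[2.1]{AS82}) the existence of a \emph{left almost split morphism} $f\colon L\to E$ in $\Latt_\Lambda$ for \emph{every} indecomposable lattice $L$ (with $R$ complete and $A$ separable). Such an $f$ is required only to be \emph{not a split monomorphism} — in particular it need not be a monomorphism at all, which is exactly what happens when $L$ is relatively injective — and to have the property that every non-split-mono out of $L$ factors through it. With this replacement your argument runs uniformly for all $L$: let $\phi$ generate the pp-type of a generating tuple $\mathbf{n}$ of $L$, $\psi$ the pp-type of $f(\mathbf{n})$ in $E$; then $L\in(\phi/\psi)$, and for any $\sigma<\phi$ in $\pp_\Lambda^n(\Tf_\Lambda)$ the map $L\to M$ to a free realisation $(M,\mathbf{m})$ of $\overline{\sigma}$ in a lattice is either a split mono (forcing $\overline{\sigma}=\phi$, a contradiction) or factors through $f$ (forcing $\overline{\sigma}\le\psi$). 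A small further remark: once $(\phi/\psi)$ is a $\Tf_\Lambda$-minimal pair, isolation of $L$ is immediate, so the preliminary step of factoring $h\colon L\to N$ through a lattice submodule $N_0\subseteq N$ and splitting $h_0$ duplicates the same computation rather than adding anything.
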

\begin{proof}
As said, the category of $\Lambda$-lattices has left almost split morphisms (see \cite[2.1]{AS82}, for instance). That is, for all indecomposable lattices $N$ there exists a homomorphism of lattices
$ f : N \rightarrow
E$ such that $f$ is a non split monomorphism and for any $\Lambda$-homomorphism of lattices $h : N \rightarrow X$ which is a non split monomorphism, there exists $\lambda \in \Hom(E,X)$ such that $f \lambda=h$.


Pick $\mathbf{n}\in N$ a generating tuple for $N$. Let $\phi$ generate the pp-type of
$\mathbf{n}$ and $\psi$ generate the pp-type of $f(\mathbf{n})$. We first show that $N \in \left( \phi / \psi \right)$. By definition $\mathbf{n}\in\phi(N)$. Suppose, for a contradiction, that
$\mathbf{n}\in\psi(N)$. Then there exists $g:E\rightarrow N$ sending $f(\mathbf{n})$ to $\mathbf{n}$.
Since $\mathbf{n}$ is a generating tuple for $N$, $g f=\text{Id}_{N}$. But this contradicts our assumption that $f$ is not split. Thus $N\in\left(\phi/\psi\right)$.

Now we take any $\sigma \in \pp_\Lambda^n (\Tf_\Lambda)$ and we claim that, if $\sigma < \phi$ then
$\sigma \leq \psi$. Since we are working modulo the theory of $R$-torsionfree $\Lambda$-modules, we may replace $\sigma$ by $\overline{\sigma}$ (see Lemma \ref{overline}). Let $M\in \Latt_\Lambda$ and $\mathbf{m}\in M$ be such that $(M, \mathbf{m})$ is a free realisation of $\overline{\sigma}$. Thus there is a homomorphism $h : N \rightarrow M$ such that $h(\mathbf{n}) = \mathbf{m}$. So either $h$ is a split monomorphism or there exists $\lambda\in \Hom(E,X)$ such that $f\lambda=h$. If $h$ is a split
monomorphism then the pp-type of $\mathbf{n}$ is equal to the pp-type of $f(\mathbf{n}) =
\mathbf{m}$, so $\mathbf{\sigma}=\phi$. In the second case, $\mathbf{\sigma} \leq \psi$. Thus $\sigma\leq_{\Tf_{\Lambda}} \psi$.

Therefore $\phi/\psi$ is a $\Tf_\Lambda$-minimal pair. Hence $(\phi/\psi)$ isolates $N$ in $\Zg_\Lambda^{tf}$.
\end{proof}

Next let us deal with the closed points of $\Zg^{tf}_\Lambda$. We start by proving an auxiliary result.

\begin{lemma}\label{ppdefQN}
For all $N\in\Tf_\Lambda$ and $\phi\in\pp_\Lambda^1$, $Q\phi(N)=\phi(QN)$.
\end{lemma}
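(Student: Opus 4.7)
The plan is to prove both inclusions $Q\phi(N) \subseteq \phi(QN)$ and $\phi(QN) \subseteq Q\phi(N)$ by exploiting the explicit syntactic form of a pp-formula: every $\phi \in \pp_\Lambda^1$ can be written as
\[
\phi(x) \;\equiv\; \exists \bar y\; (x,\bar y)\,H = 0
\]
for some matrix $H$ with entries in $\Lambda$, so $\phi(M) = \{m \in M \mid \exists \bar n \in M^{k-1},\, (m,\bar n)H = 0\}$ for any $\Lambda$-module $M$. Because $N$ is $R$-torsionfree, the canonical map $N \to QN = Q \otimes_R N$ is injective, so $\phi(N)$ may be regarded as a subset of $\phi(QN)$, and $Q\phi(N)$ makes sense as the $Q$-subspace of $QN$ generated by $\phi(N)$.

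For the inclusion $Q\phi(N) \subseteq \phi(QN)$, I would take a generator $q \cdot n$ of $Q\phi(N)$ with $q \in Q$, $n \in \phi(N)$, pick a witness $\bar n \in N^{k-1}$ with $(n,\bar n)H = 0$, and observe that multiplying by $q$ (which belongs to the centre of $Q\Lambda$, since $R$ is in the centre of $\Lambda$) yields $(qn, q\bar n)H = 0$ in $QN$. Hence $qn \in \phi(QN)$, and since $\phi(QN)$ is an abelian subgroup of $QN$, the whole $Q$-span lies in it.

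The content is in the other inclusion $\phi(QN) \subseteq Q\phi(N)$, which is where the pp-structure really matters. Given $x \in \phi(QN)$ with witness tuple $\bar y \in (QN)^{k-1}$, I would clear denominators: since there are only finitely many coordinates of $(x,\bar y)$, I can choose a single $r \in R \setminus \{0\}$ and elements $n_0,\dots,n_{k-1} \in N$ with $x = n_0/r$ and $y_i = n_i/r$. Substituting into the system gives $(n_0,\dots,n_{k-1})H \cdot r^{-1} = 0$ in $QN$; equivalently, the $N$-tuple $(n_0,\dots,n_{k-1})H$ has image $0$ in $QN$. Since $N$ is $R$-torsionfree, the map $N \to QN$ is injective, so $(n_0,\dots,n_{k-1})H = 0$ in $N$. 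Thus $n_0 \in \phi(N)$ with witness $(n_1,\dots,n_{k-1})$, and $x = r^{-1} n_0 \in Q\phi(N)$.

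The only mildly delicate step is the common-denominator argument, which requires that $QN$ really is the localization $(R\setminus\{0\})^{-1}N$ and that the denominator $r$ can be chosen centrally (so it commutes past the $\Lambda$-coefficients of $H$); both hold because $R$ is central in $\Lambda$ and $N$ is $R$-torsionfree. There is no induction on formula complexity needed: the whole argument is a single clearing-of-denominators.
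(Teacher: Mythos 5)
Your proof is correct and follows essentially the same route as the paper's: write $\phi$ in matrix form, use that $Q$-scalars commute with the $\Lambda$-coefficients for $Q\phi(N)\subseteq\phi(QN)$, and clear denominators for the reverse inclusion. The only cosmetic difference is that the paper establishes the first inclusion by noting that $\phi(QN)$ is a $Q$-subspace of $QN$, which is exactly the fact your direct multiplication by $q$ verifies.
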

\begin{proof}
Let $\phi(x) \doteq \exists \mathbf{y} \ (x, \, \mathbf{y})T_\phi=0$ where $T_\phi$ is a matrix of a suitable size with entries in $\Lambda$. Since $N$ is a submodule of $QN$, $\phi(N)\subseteq \phi(QN)$. All pp-definable subsets of $QN$ are $Q$-vector subspaces, so $Q\phi(N)\subseteq \phi(QN)$.

Now suppose that $m\in \phi(QN)$. There exists $\mathbf{l}=(l_1,\ldots,l_k)$ in $QN$ such that $(m,\mathbf{l})T_\phi=0$. Let $c\in R$ be such that $mc,l_1c,\ldots l_kc\in N$. Then $(mc,\mathbf{l}c)T_{\phi}=0$. So $mc\in\phi(N)$. Thus $m\in Q\phi(N)$. Therefore $Q\phi(N)=\phi(QN)$.
\end{proof}

When $R$ is a complete noetherian valuation domain, we are now able to describe the closure of a $\Lambda$-lattice.

\begin{prop}\label{closure}
Let $R$ be a complete discrete valuation domain and $\Lambda$ an order over $R$. Let $\pi$ denote a generator of the maximal ideal of $R$.
If $N$ is an indecomposable $\Lambda$-lattice and $M$ is in the (Ziegler) closure of
$N$ but is not equal to $N$, then $M$ is a direct summand of $QN$. In particular $M$ is a closed point and $\pp_\Lambda^1(M)$ is of finite length.
\end{prop}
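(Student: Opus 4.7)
The strategy is to identify $\overline{\{N\}}$ with $\{N, M_1, \dots, M_k\}$, where the $M_i$ are the indecomposable $\Lambda$-summands of $QN$.

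First I would analyze $QN$. Since $N$ is finitely generated over $R$, $QN$ is finite-dimensional over $Q$, hence of finite length as an $A := Q\Lambda$-module. By Lemma~\ref{ppdefQN}, every pp-definable subgroup of $QN$ is of the form $Q\phi(N)$, hence a $Q$-subspace and in particular an $A$-submodule. Finite length over $A$ gives DCC on pp-definable subgroups, so $QN$ is $\Sigma$-pure-injective. Krull--Schmidt for pure-injectives (valid over the complete local ring $R$, as noted after Fact~\ref{oplus}) decomposes $QN = \bigoplus_{i=1}^{k} M_i^{n_i}$ into pairwise non-isomorphic indecomposable pure-injective $\Lambda$-modules. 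Each $M_i$ is $R$-torsionfree, $R$-divisible, and has $\pp_\Lambda^1(M_i)$ of finite length, so $M_i$ is a closed point of $\Zg_\Lambda^{tf}$.

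Next I would verify $QN \in \overline{\{N\}}$. Since $\pi$ is central in $\Lambda$, multiplication by $\pi^n$ is a $\Lambda$-isomorphism $N \xr{\sim} \pi^{-n}N$, so $QN = \bigcup_{n\ge 0}\pi^{-n}N$ is a directed union of copies of $N$. Because pp-formulas commute with directed colimits, any element of $\phi(QN)\sm\psi(QN)$ arises from an element of $\phi(N)\sm\psi(N)$ through one of these isomorphisms; consequently $QN \in (\phi/\psi)$ forces $N \in (\phi/\psi)$. Each $M_i$, being a summand of $QN$, then also lies in $\overline{\{N\}}$.

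The main step is the converse: for $M \in \overline{\{N\}}\sm\{N\}$, to show $M \cong M_i$ for some $i$. I would first establish that $M$ is $R$-divisible. If it were not, the maximal divisible submodule of $M$ would split off as a pure-injective direct summand, and indecomposability would then force $M$ to be torsionfree, reduced, indecomposable, and pure-injective; in this separable complete-DVR setting such a module is necessarily an indecomposable $\Lambda$-lattice, so Lemma~\ref{lattiso} would make $M$ isolated in $\Zg_\Lambda^{tf}$. The isolating open would contain $N$ by $M \in \overline{\{N\}}$, forcing $M = N$ and contradicting our hypothesis.

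Granted $R$-divisibility, I would show that the canonical lattice surjection $\pp_\Lambda^1(N)\twoheadrightarrow\pp_\Lambda^1(M)$ (arising from $M \in \overline{\{N\}}$) factors through $\pp_\Lambda^1(QN)$; this places $M$ in $\overline{\{QN\}}$, which equals $\{M_1,\dots,M_k\}$ since each $M_i$ is closed and $QN = \bigoplus M_i$. Reducing (WLOG via $\phi\wg\psi$ and $\phi+\psi$) to pairs $\psi\le\phi$ with $Q\phi(N) = Q\psi(N)$, commensurability of the finitely generated $R$-submodules $\phi(N),\psi(N)\seq N$ yields $r \in R\sm\{0\}$ with $r\phi(N) \seq \psi(N)$. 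Setting $\psi'(x) := \ex y\,(\psi(y) \wg y = xr)$ we have $\phi \le \psi'$ in $\pp_\Lambda^1(N)$, hence by closure in $\pp_\Lambda^1(M)$, so $r\phi(M) \seq \psi(M)$; divisibility of $M$ makes $\psi(M)$ a $Q$-subspace, cancelling $r$ to give $\phi(M) \seq \psi(M)$ as required. The hard part of the argument is the $R$-divisibility step, which rests on the structural dichotomy for indecomposable torsionfree pure-injectives over orders in complete discrete valuation domains.
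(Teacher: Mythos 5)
Your step (b) --- using commensurability of $\phi(N)$ and $\psi(N)$ to push the surjection $\pp_\Lambda^1(N)\twoheadrightarrow\pp_\Lambda^1(M)$ through $\pp_\Lambda^1(QN)$ once $M$ is known to be $R$-divisible --- is correct and is a reasonable alternative to the paper's direct manipulation of a basic open $(\phi/\psi)$. But your divisibility step (a) has a genuine gap: you assert that a torsionfree, reduced, indecomposable pure-injective $\Lambda$-module over a complete DVR ``is necessarily an indecomposable $\Lambda$-lattice,'' and that is simply false. Nothing forces such a module to be finitely generated; the nontrivial content of the infinite lattice representation type case (for instance $\widehat{\Z_2}\,C(2)^2$, cf.\ \cite{PT11}) is precisely that $\Zg^{tf}_\Lambda$ contains reduced indecomposable pure-injective points that are not lattices. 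Since your contradiction hinges on that assertion, the argument does not establish that $M$ is divisible. You also lean on Lemma~\ref{lattiso}, which assumes $Q\Lambda$ separable --- a hypothesis absent from Proposition~\ref{closure}.

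The paper sidesteps the divisibility question entirely and needs no dichotomy for reduced points. Given any $(\phi/\psi)$ containing $M$, it first rules out $\phi(N)/\psi(N)$ having finite $R$-length: in that case the interval $[\psi,\phi]_N$ has finite length, and decomposing it into $N$-simple steps $[\phi_{i+1},\phi_i]_N$, the minimal-pair result \cite[7.10]{Zi} forces each $(\phi_i/\phi_{i+1})$, and hence $(\phi/\psi)$, to meet $\mathrm{cl}(N)$ only in $N$, contradicting $M\in(\phi/\psi)$ with $M\neq N$. Once $\phi(N)/\psi(N)$ has infinite length, its free rank is positive, giving $m\in\phi(N)$ with $m\pi^l\notin\psi(N)$ for all $l$; by Lemma~\ref{ppdefQN} this $m$ lies in $\phi(QN)\setminus\psi(QN)$, so some indecomposable summand $L_i$ of $QN$ lies in $(\phi/\psi)$. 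Hence $M\in\mathrm{cl}(\{L_1,\dots,L_m\})=\{L_1,\dots,L_m\}$. This route requires no separability and no structural classification of reduced points, which is why the proposition can be stated for an arbitrary $R$-order.
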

\begin{proof}
Let $M$ be in the closure of $N$ but not equal to $N$. Suppose that $M\in\left(\phi/\psi\right)$. We aim to show that $\phi(QN)\supsetneq \psi(QN)$.

Since $M\in\left(\phi/\psi\right)$ and $M$ is in the closure of $N$, $N\in\left(\phi/\psi\right)$. If $\phi(N)/\psi(N)$ were finite length as an $R$-module then the interval $[\psi,\phi]_N\subseteq \pp_\Lambda^1(N)$ would be finite length. Let $\phi=:\phi_0\geq \phi_1\geq\ldots\geq\phi_{n+1}:=\psi$ be such that $[\phi_{i+1},\phi_i]_N$ is simple. Then $\left(\phi/\psi\right)=\bigcup^n_{i=0}\left(\phi_i/\phi_{i+1}\right)$. Since $[\phi_{i+1},\phi_i]_N$ is simple, by \cite[7.10]{Zi}, $\left(\phi_i/\phi_{i+1}\right)$ isolates $N$ in its closure and hence $\left(\phi/\psi\right)$ isolates $N$ in its closure. Therefore $\phi(N)/\psi(N)$ is infinite length as an $R$-module.

Since $R$ is noetherian and $N$ is finitely generated as an $R$-module, $\phi(N)/\psi(N)$ is finitely generated as an $R$-module. Thus $\phi(N)/\psi(N)$ is isomorphic to $R^n\oplus T$ where $T$ is a finitely generated torsion $R$-module. Since $\phi(N)/\psi(N)$ is infinite length as an $R$-module, $n\geq 1$. Thus there exists $m\in \phi(N)$ such that $m\pi^l\notin \psi(N)$ for all $l\in\N$. By Lemma \ref{ppdefQN}, $m\in\phi(QN)$ and $m\notin\psi(QN)$.

Since $N$ is a lattice, $QN$ is finite dimensional. Let $L_1,\ldots,L_m$ be the indecomposable summands of $QN$. If $\phi(QN)\supsetneq\psi(QN)$ then $L_i\in \left(\phi/\psi\right)$ for some $1\leq i\leq m$. Thus if $M\in\left(\phi/\psi\right)$ then $L_i\in \left(\phi/\psi\right)$ for some $1\leq i\leq m$. So $M$ is in the closure of $\{L_1,\ldots,L_m\}$. Since each $L_i$ is a closed point, $M$ is in the closure of $\{L_1,\ldots,L_m\}$ if and only if $M=L_i$ for some $1\leq i\leq m$.
\end{proof}

Recall that the {\sl support} of a $\Lambda$-module $M$, $\Supp (M)$, is the
set of indecomposable pure injective $\Lambda$-modules $N$ such that for all pp-pairs
$\phi/\psi$, $\phi(M)=\psi(M)$ implies $\phi(N)=\psi(N)$. In particular,
if $M \in \Zg_\Lambda$ then $\Supp (M)$ is the closure of $M$ in $\Zg_\Lambda$.

\begin{lemma}\label{close1}
If $N \in \Zg_\Lambda^{tf}$  and $S \in \Supp (QN)$ then $S$ is in the closure of $N$.
In particular, if $N\in \Zg_\Lambda^{tf}$  and $S\in\Zg_A$ is a direct summand of $QN$,
then $S$ is in the closure of $N$.
\end{lemma}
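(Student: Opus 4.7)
The plan is to show directly that every basic open neighbourhood of $S$ in $\Zg_\Lambda$ also contains $N$, which by definition means $S \in \overline{\{N\}}$. So I would fix a pp-pair $\phi/\psi$ with $S \in (\phi/\psi)$ and aim to deduce $N \in (\phi/\psi)$.

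From $S \in \Supp(QN)$ and the definition of support (in its contrapositive form), $\phi(S) > \psi(S)$ forces $\phi(QN) > \psi(QN)$, i.e.\ $QN \in (\phi/\psi)$. This is the step where the hypothesis on $S$ is consumed. Now I would invoke Lemma \ref{ppdefQN} to rewrite $\phi(QN)=Q\phi(N)$ and $\psi(QN)=Q\psi(N)$. Thus $Q\phi(N) \supsetneq Q\psi(N)$, and since passing to the $Q$-span is monotone in the inclusion of $R$-submodules of $N$, we must already have $\phi(N) \supsetneq \psi(N)$, i.e.\ $N \in (\phi/\psi)$. Since $(\phi/\psi)$ was an arbitrary basic open containing $S$, $S$ lies in the closure of $\{N\}$.

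For the ``in particular'' statement it suffices to observe that any direct summand $S$ of $QN$ automatically belongs to $\Supp(QN)$: pp-subgroups distribute over finite direct sums, so $\phi(QN)=\psi(QN)$ restricts to $\phi(S)=\psi(S)$. Combined with the first part, this gives $S \in \overline{\{N\}}$.

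I do not foresee a real obstacle here; the entire argument is an application of Lemma \ref{ppdefQN}, and the only mild point to check is the implication $Q\phi(N) \neq Q\psi(N) \Rightarrow \phi(N) \neq \psi(N)$, which is immediate from the obvious inclusion $\psi(N) \subseteq \phi(N)$ of $R$-submodules.
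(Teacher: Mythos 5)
Your proof is correct and follows essentially the same route as the paper's: both use Lemma \ref{ppdefQN} together with the definition of support, with the only cosmetic difference being that the paper writes the chain of implications in contrapositive form (from $\phi(N)=\psi(N)$ to $\phi(S)=\psi(S)$) while you argue forwards from $\phi(S)\supsetneq\psi(S)$. The ``in particular'' observation about direct summands lying in the support is also the standard remark the paper relies on.
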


\begin{proof}
Suppose that $\phi\geq\psi\in\pp_\Lambda^1$. By \ref{ppdefQN}, if $\phi(N)=\psi(N)$ then $\phi(QN)=Q\phi(N)=Q\psi(N)=\psi(QN)$. By definition, if $S\in\Supp (QN)$ then $\phi(QN)=\psi(QN)$ implies $\phi(S)=\psi(S)$. Thus $S$ is in the closure of $N$.
\end{proof}

\begin{cor}\label{closedina}
If $N\in \Zg_\Lambda^{tf}$ is a closed point then $N\in\Zg_A$ and $N$ is a closed point in $\Zg_A$.
\end{cor}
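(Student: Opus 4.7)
The plan is to use Lemma \ref{close1} to force $N$ to be $R$-divisible, which together with $R$-torsionfreeness makes $N$ an $A$-module, and then transfer the remaining structural properties (pure-injectivity, indecomposability, closedness) between $\Zg_\Lambda$ and $\Zg_A$ via the ring epimorphism $\Lambda \hookrightarrow A$.

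First, I would show $N$ is an $A$-module. Because $\Zg_\Lambda^{tf}$ is closed in $\Zg_\Lambda$, $N$ being closed in $\Zg_\Lambda^{tf}$ means $N$ is also closed in $\Zg_\Lambda$, so $\Supp(N) = \{N\}$. Since $N \neq 0$ is $R$-torsionfree, $QN \neq 0$, hence $\Supp(QN) \neq \emptyset$. Lemma \ref{close1} gives $\Supp(QN) \subseteq \overline{\{N\}} = \{N\}$, so $\Supp(QN) = \{N\}$ and in particular $N \in \Supp(QN)$. Now fix $0 \neq r \in R$. Since $QN$ is $R$-divisible, the pp-pair $(x=x)/(\exists y\, yr = x)$ is closed on $QN$, and from $N \in \Supp(QN)$ (by the definition of $\Supp$) this pair is closed on $N$ as well. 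Thus $N$ is $r$-divisible for every $0 \neq r \in R$, and combining with $R$-torsionfreeness yields a unique compatible $Q$-vector space structure on $N$, making it an $A$-module.

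Next I would verify that $N$ is still indecomposable pure-injective as an $A$-module. Indecomposability is automatic, since any $A$-module direct sum decomposition of $N$ is also a $\Lambda$-module decomposition. For pure-injectivity, any pure embedding of $A$-modules $M \hookrightarrow M'$ is in particular a pure embedding of $\Lambda$-modules, and any $A$-linear map $f : M \to N$ is $\Lambda$-linear, so it extends to a $\Lambda$-linear map $g : M' \to N$ by pure-injectivity over $\Lambda$; this $g$ is automatically $A$-linear because $N$ is $R$-torsionfree (for $q = a/b \in Q$ and $m \in M'$, one gets $b(g(qm) - qg(m)) = 0$, hence $g(qm) = qg(m)$). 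Therefore $N \in \Zg_A$.

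For closedness in $\Zg_A$, I would appeal to the standard fact that, because $\Lambda \to A$ is a ring epimorphism (being the localization at $R \setminus \{0\}$), restriction of scalars embeds $\Zg_A$ as a closed subspace of $\Zg_\Lambda$, and the two topologies on $\Zg_A$ agree — concretely, any $A$-pp-formula can be turned into a $\Lambda$-pp-formula defining the same subset on any $A$-module by clearing denominators using $R$-torsionfreeness. Since $\Zg_A \subseteq \Zg_\Lambda^{tf}$ with the subspace topology, $\{N\}$ closed in $\Zg_\Lambda^{tf}$ forces $\{N\}$ closed in $\Zg_A$. The main obstacle I anticipate is the clean handling of this last topological step; the $R$-divisibility argument above is the conceptual core, and everything else is careful bookkeeping about scalar extension and pp-definability.
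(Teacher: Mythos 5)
Your proof is correct and follows exactly the intended route: the paper states this corollary without proof, as an immediate consequence of Lemma~\ref{close1}, and your argument — using $\Supp(QN)\subseteq\overline{\{N\}}=\{N\}$ to force $N\in\Supp(QN)$, deducing $R$-divisibility (hence the $A$-module structure) from the support condition, and then passing to the closed subset $\Zg_A\subseteq\Zg_\Lambda^{tf}$ via the epimorphism $\Lambda\to A$ — is precisely the reasoning the authors leave implicit.
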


We include in this section some further useful remarks.
For every $R$-module $M$, let $\text{ Sub}_R (M)$ be the lattice of $R$-submodules of $M$.

\begin{lemma}\label{md1}
Let $R$ be a Dedekind domain and $M$ a torsionfree finitely generated module over $R$. Then
$\text{ Sub}_R (M)$ has m-dimension $1$.
\end{lemma}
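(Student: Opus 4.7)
To show that $\text{Sub}_R(M)$ has m-dimension $1$, I plan to identify the lattice obtained by the first collapse of finite-length intervals with the lattice of $Q$-subspaces of $QM$, which is a modular lattice of finite length $\text{rank}_R(M)$.

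The heart of the argument lies in characterising when two submodules $N_1, N_2 \subseteq M$ are identified at the first step, i.e., when the interval $[N_1 \cap N_2, N_1 + N_2]$ has finite length. This interval is isomorphic to the submodule lattice of $(N_1 + N_2)/(N_1 \cap N_2)$, so it has finite length iff this quotient $R$-module does. Over the Dedekind domain $R$, a finitely generated module has finite length exactly when it is torsion, which by tensoring with $Q$ is equivalent to $Q(N_1 + N_2) = Q(N_1 \cap N_2)$. Here $Q \otimes_R (-)$ commutes with sum trivially and with intersection by a denominator-clearing argument: any $v \in QN_1 \cap QN_2$ admits a nonzero $R$-multiple lying in $N_1 \cap N_2$. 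Hence the condition reduces to $QN_1 + QN_2 = QN_1 \cap QN_2$, equivalent to $QN_1 = QN_2$ as $Q$-subspaces of $QM$.

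Consequently, the map $N \mapsto QN$ from $\text{Sub}_R(M)$ to the subspace lattice of $QM$ is a lattice homomorphism whose fibres are precisely the step-1 equivalence classes; it is surjective because any $Q$-subspace $V$ of $QM$ satisfies $V = Q(V \cap M)$, since every $v \in V$ has a nonzero $R$-multiple in $M$ (as $M$ spans $QM$ over $Q$, any $v$ can be written with $Q$-coefficients on finitely many elements of $M$, and a common denominator clears them). This yields a lattice isomorphism between the step-1 quotient of $\text{Sub}_R(M)$ and the subspace lattice of $QM$. Since the latter is of finite length, we conclude $\text{m-dim}\ \text{Sub}_R(M) = 1$. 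No real obstacle arises; the only mildly subtle ingredient is the commutation of $Q$-scalar extension with intersection of $R$-submodules, handled by the denominator-clearing argument above.
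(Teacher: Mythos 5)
Your argument is correct, but it takes a different route from the paper's. The paper argues by reduction: it observes that $M$ is a direct summand of $R^n$, so $\text{Sub}_R(M)$ embeds as a sublattice of $\text{Sub}_R(R^n)$, then uses a filtration of $R^n$ by copies of $R$ to reduce further to $\text{Sub}_R(R)$, where the computation is immediate (every interval $[J,R]$ with $J\neq 0$ has finite length). Your proof instead computes the step-one collapse explicitly, showing that the equivalence ``$[N_1\cap N_2,\,N_1+N_2]$ has finite length'' coincides with ``$QN_1=QN_2$,'' and that $N\mapsto QN$ is a surjective lattice homomorphism onto the finite-length subspace lattice of $QM$. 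The paper's approach is shorter and leans on general monotonicity and filtration facts about m-dimension; yours is more self-contained and gives a concrete identification of the first derived lattice, which conveys more structural information. Two small points worth making explicit in your write-up: (i) the equivalence relation you describe is a congruence because it is the kernel of a lattice homomorphism (this is why the step-one quotient really is the subspace lattice and not something coarser); and (ii) since $QM\neq 0$ the subspace lattice is nontrivial, so $L^1$ is a nontrivial lattice of finite length, which simultaneously gives $\text{m-dim}\geq 1$ and $\text{m-dim}\leq 1$, completing the equality rather than just the upper bound.
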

\begin{proof}
Since $M$ is torsionfree, $R$ is a submodule of $M$ and hence $\text{ Sub}_R (M)$ is not of finite length.
Thus $\text{m-dim} \, \text{ Sub}_R (M) \neq 0$.

Since $M$ is finitely generated and torsionfree, $M$ is a direct summand of $R^n$ for some positive integer
$n$. It follows that, if $\text{m-dim} \, \text{ Sub}_R (R^n) \leq 1$ for all $n\in\N$, then $\text{m-dim}
\, \text{ Sub}_R (M) \leq 1$ and hence $\text{m-dim} \, \text{ Sub}_RM=1$.

On the other hand, since $R^n$ can be filtered as a finite chain of submodules with quotients isomorphic to $R$, $\text{m-dim} \, \text{ Sub}_R (R^n) = \text{m-dim} \, \text{ Sub}_R (R)$.

Let $J$ be a non zero ideal of $R$. Then $R/J$ is of finite length. Thus the interval $[J,R]$ in $\text{Sub}_R
(R)$ is of finite length. So $\text{m-dim} \, \text{ Sub}_R (R) =1$.
\end{proof}

\begin{cor}\label{md2}
Let $R$ be a Dedekind domain, $\Lambda$ an order over $R$ and $M$ a $\Lambda$-lattice. Then $\pp_\Lambda^1 (M)$ has m-dimension $1$.
\end{cor}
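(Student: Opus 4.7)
The plan is to sandwich $\text{m-dim}\,\pp_\Lambda^1(M)$ between $1$ and $1$, with the upper bound coming immediately from Lemma \ref{md1} and the lower bound from a concrete infinite chain. The main conceptual point is that the centrality of $R$ in $\Lambda$ forces every pp-definable subgroup of $M$ (as a $\Lambda$-module) to be an $R$-submodule of $M$: multiplication by any $r\in R$ is a $\Lambda$-endomorphism of $M$, and pp-subgroups are closed under endomorphisms. Therefore the evaluation map $\phi \mapsto \phi(M)$ defines an order-preserving embedding of lattices
\[
\pp_\Lambda^1(M)\;\hookrightarrow\;\text{Sub}_R(M),
\]
which is injective by the very definition of $\pp_\Lambda^1(M)$ as a quotient of $\pp_\Lambda^1$ by the theory of $M$. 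Since m-dimension does not increase under passage to a sublattice, Lemma \ref{md1} gives $\text{m-dim}\,\pp_\Lambda^1(M) \le \text{m-dim}\,\text{Sub}_R(M) = 1$.

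For the reverse inequality I need to produce an infinite strictly descending chain in $\pp_\Lambda^1(M)$, witnessing that the lattice is not of finite length (assuming, as is implicit, that $M \neq 0$). Since $R$ is a Dedekind domain that is not a field, there exists a nonzero non-unit $r \in R$. For each $k \in \N$ the pp-formula $\phi_k(x) \doteq \exists y\,(x = y r^k)$ has interpretation $\phi_k(M) = Mr^k$. Because $M$ is $R$-torsionfree, multiplication by $r^k$ is an $R$-linear isomorphism $M \xrightarrow{\sim} Mr^k$, which induces $Mr^k/Mr^{k+1} \cong M/Mr$. Since $M$ is finitely generated and $R$-torsionfree over a Dedekind domain, it is a projective $R$-module of positive rank, so $M/Mr \neq 0$ for any non-unit $r$. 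Thus the chain
\[
M \;\supsetneq\; Mr \;\supsetneq\; Mr^2 \;\supsetneq\; \cdots
\]
is strictly descending in $\pp_\Lambda^1(M)$, showing that the latter is not of finite length and hence $\text{m-dim}\,\pp_\Lambda^1(M) \ge 1$.

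The two bounds together give $\text{m-dim}\,\pp_\Lambda^1(M) = 1$, as claimed. There is no real obstacle here beyond packaging: the upper bound is a direct consequence of Lemma \ref{md1} via the embedding into $\text{Sub}_R(M)$, and the lower bound is a short torsionfreeness argument. The only mildly delicate point is verifying strictness of the chain $Mr^k \supsetneq Mr^{k+1}$, which reduces via the isomorphism $Mr^k/Mr^{k+1} \cong M/Mr$ to the non-vanishing of $M/Mr$, guaranteed by $M$ being projective of positive rank over $R$ and $r$ being a non-unit.
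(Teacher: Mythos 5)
Your proof is correct and takes essentially the same approach as the paper: the upper bound is obtained exactly as there, by embedding $\pp_\Lambda^1(M)$ into $\mathrm{Sub}_R(M)$ via centrality of $R$ and invoking Lemma \ref{md1}, and the lower bound comes from the same strictly descending chain $M \supsetneq Mr \supsetneq Mr^2 \supsetneq \cdots$ for a non-unit $r$. The only cosmetic difference is that the paper justifies strictness by choosing a specific $c$ witnessing non-$R$-divisibility of $M$, whereas you fix an arbitrary non-unit and appeal to $M$ being projective of positive rank; both are routine and equivalent.
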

\begin{proof}
Since all pp-definable subgroups of $M$ are $R$-submodules, $\pp_{\Lambda}^1(M)$
is a sublattice of $\text{Sub}_R (M)$. Thus m-dim $\pp_{\Lambda}^1 (M) \leq \text{ m-dim}  \text{Sub}_R (M) =1$. Since $M$ is $R$-torsionfree and not $R$-divisible, there exist $c \in R\backslash\{0\}$ and $m\in M$ such that $m\notin Mc$.
Since $M$ is $R$-torsionfree, $mc^n\notin Mc^{n+1}$. Thus $Mc^{n+1}\subsetneq Mc^n$ for all $n\in\N$. Thus
$pp_\Lambda^1 (M)$ is not of finite length, whence its m-dimension cannot be 0.
\end{proof}

\section{The torsionfree part of the Ziegler spectrum}\label{S-zieg}

In this section we extend the main results of \cite{MPT}, about the torsionfree part
of the Ziegler spectrum of a group ring $RG$, with $R$ a Dedekind domain of characteristic 0
and $G$ a finite group, to arbitrary orders $\Lambda$ over a Dedekind domain
$R$ in a {\sl semisimple} $Q$-algebra $A=Q\Lambda$. We also investigate the m-dimension of $\pp_\Lambda^1(\Tf_\Lambda)$ and the Cantor-Bendixson rank of $\Zg_\Lambda^{tf}$ in both this
framework and the more general setting where $A$ is not assumed to be semisimple.

If $P$ is a prime ideal of $R$ then we write $\Lambda_P$ for the central localisation of $\Lambda$ at $P$ and $\widehat{\Lambda_P}$ for its completion at $P$. Note that $\Lambda_P$ is an $R_P$-order in $A$ and $\widehat{\Lambda_P}$ is an $\widehat{R_P}$-order.

We start in the general setting.  The endomorphism ring of every indecomposable pure injective module $N$, $\End(N)$, is local. Let $P(N)$ denote its maximal ideal. When $N$ is $R$-torsionfree, $R$
embeds in a natural way into $\End(N)$. Moreover  $P(N)\cap R$ is a prime ideal of $R$. Thus, every indecomposable pure injective $\Lambda$-module is a module over $\Lambda_P$ for some prime
and even maximal ideal $P$ of $R$. The homomorphism $\Lambda\rightarrow \Lambda_P$ is an epimorphism and hence restriction of scalars induces an embedding of $\Zg_{\Lambda_P}$ into $\Zg_\Lambda$ whose image is a closed subset. This embedding restricts to an embedding of $\Zg_{\Lambda_P}^{tf}$ into $\Zg_{\Lambda}^{tf}$ and again the image is a closed subset. Identifying $\Zg_{\Lambda_P}^{tf}$ with the set of $N\in \Zg_{\Lambda}^{tf}$ such that $P(N)\cap R\subseteq P$, we may write \[\Zg_\Lambda^{tf}=\bigcup_{P}\Zg_{\Lambda_P}^{tf}\] where $P$ ranges over maximal ideals of $R$. Since $R$ has Krull dimension $1$, if $P,P'$ are distinct maximal ideals of $R$ then $\Zg_{\Lambda_P}^{tf}\cap\Zg_{\Lambda_{P'}}^{tf}=\Zg_A$.

This description of the space is not particularly useful for computing the Cantor-Bendixson rank of $\Zg_\Lambda^{tf}$ because if $T$ is a topological space, $X$ is a closed subset of $T$ and $p\in X$ then the Cantor-Bendixson rank of $p$ as a point in $X$ may strictly less than the Cantor-Bendixson rank of $p$ as a point in $T$. Thus we now work to give a more useful description.

Since $R$ is noetherian, every (maximal) ideal $P$ of $R$ is finitely generated,
whence there is a pp-formula of $L_R$ defining in any $R$-module $M$ just $MP$: if $\mathbf{r} =
(r_1, \ldots, r_l)$ is a generating tuple of $P$, it suffices to take $\exists y_1 \, \ldots \, \exists y_l \,
(x = y_1 r_1 + \ldots + y_l r_l )$. Let $ P \mid x$ denote this formula. For instance, when $R$ is a
discrete valuation domain and $\pi$ is a generator of its maximal ideal $P$, then the formula is
$\pi \mid x$, that is, $\exists y (x = y \pi)$.

If $N\in \Zg_{\Lambda_{P'}}^{tf}$ and $P\neq P'$ then $NP=N$ since some element of $P$ is not in $P'$, that is, some element of $P$ acts invertibly on $N$.

Now suppose that $N\in \Zg_{\Lambda_{P}}^{tf}$ and $N\notin\left(x=x/P|x\right)$. Let $(r_1, \ldots, r_l)$ still denote a tuple of generators of $P$. Since $R_P$ is a valuation domain, there exists $1\leq j \leq l$ such that $r_i \in r_j R_P$  for all $1\leq i \leq l$. Put $r = r_j$ and for all $i$ write $r_i =  r \, c_i/a_i$ with $c_i, a_i \in R$ and $a_i \notin P$. Then $r c_i = r_i a_i$ for all $i$. Set $a = \prod_{1 \leq h \leq l} a_h$. Then $a \in R \setminus P$. Multiply the $i$-th equation above by $\prod_{h \neq i} a_h$ and get for every $i$ a new equation $ r b_i = r_i a$ for a suitable $b_i \in R$.
It follows that $NP=Nr$. Hence the fact that $NP=N$ implies that $r$ acts invertibly on $N$. So $P(N) \cap
R \subsetneq P$. Therefore $P(N) \cap R$, as a prime ideal of $R$, coincides with $0$. So $N$ is $R$-divisible i.e. $N\in\Zg_A$. Thus we have shown that

\[\Zg_\Lambda^{tf} = \bigcup_P (\left( x=x / P \mid x\right) \cap \Zg_\Lambda^{tf}) \cup \Zg_A\]

\noindent
where $P$ ranges over maximal ideals of $R$.

As promised, we now generalise the main results \cite{MPT} to orders in semisimple algebras. A large part of the proof is the same as over group rings, but adaptions are sometimes necessary.

\begin{theorem}\label{spectrum}
Let $R$ be a Dedekind domain with field of fractions $Q$, and $\Lambda$
an $R$-order in a semisimple $Q$-algebra $A$. If $N \in \Zg_\Lambda^{tf}$, then either
\begin{itemize}
\item $N$ is a simple $A$-module, or
\item there is some maximal ideal $P$ of $R$ such that $N \in \Zg^{tf}_{\widehat{\Lambda_P}}$ and $N$ is $R_P$-reduced.
\end{itemize}
Moreover, if $N\in\Zg^{tf}_{\widehat{\Lambda_P}}$ is $R_P$-reduced then $N\in \Zg_\Lambda^{tf}$.
\end{theorem}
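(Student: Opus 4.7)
The plan is to read off both halves of the theorem from the decomposition
\[\Zg_\Lambda^{tf}=\bigcup_P\bigl((x=x/P\mid x)\cap\Zg_\Lambda^{tf}\bigr)\cup \Zg_A\]
established immediately before the statement. For the first case, if $N\in \Zg_A$ then $N$ is an indecomposable pure injective $A$-module; since $A$ is semisimple every $A$-module is injective and semisimple, so $N$ must be a simple $A$-module. The real work is in the second alternative, where $NP\neq N$ for some maximal ideal $P$ of $R$.

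In that alternative, centrality of $R$ and the embedding $R\hookrightarrow \End_\Lambda(N)$ into the local ring, together with $NP\neq N$, force $P(N)\cap R=P$, so $R\sm P$ acts invertibly and $N$ becomes a $\Lambda_P$-module. The heart of the argument is to show that the maximal $R_P$-divisible submodule $D\seq N$ vanishes. Centrality of $R$ makes $D$ a $\Lambda$-submodule; divisibility plus $R$-torsionfreeness makes $D$ a $Q$-vector space, hence an $A$-module. I would then establish two subclaims. First, every $A$-module is pure injective over $\Lambda$: using that $A$ is flat over $\Lambda$ (as $Q$ is a central localisation of $R$) and semisimple, any pure $\Lambda$-monomorphism $M\hookrightarrow M'$ becomes a split $A$-monomorphism after $-\otimes_\Lambda A$, and a $\Lambda$-map $M\to X$ into an $A$-module $X$ extends to $M'\to X$ via the retraction together with the canonical isomorphism $X\otimes_\Lambda A\cong X$. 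Second, $D$ is pure in $N$ as a $\Lambda$-submodule: given $\phi\in\pp_\Lambda^n$ and $d\in D\cap \phi(N)$ with a witness tuple in $N$, view the tuple inside the $A$-module $QN$; semisimplicity of $A$ yields an $A$-splitting $QN=D\oplus E$, and applying the $A$-linear projection onto $D$ preserves the defining matrix equations of $\phi$ (whose entries lie in $\Lambda\seq A$), producing a witness tuple inside $D$. Pure plus pure injective then makes $D$ a direct $\Lambda$-summand of $N$, and indecomposability of $N$ together with $N\notin \Zg_A$ forces $D=0$; that is, $N$ is $R_P$-reduced.

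To produce the $\widehat{\Lambda_P}$-structure, combine $R_P$-reducedness ($\bigcap_n NP^n=0$) with pure injectivity: every $P$-adic Cauchy sequence in $N$ has a unique limit (existence by algebraic compactness, uniqueness by reducedness), so the $R_P$-action extends continuously and uniquely to an $\widehat{R_P}$-action commuting with $\Lambda_P$, which gives the desired $\widehat{\Lambda_P}=\Lambda_P\otimes_{R_P}\widehat{R_P}$-module structure on $N$. Because $R_P\to\widehat{R_P}$, and hence $\Lambda_P\to\widehat{\Lambda_P}$, is a ring epimorphism, restriction of scalars along the composite $\Lambda\to\Lambda_P\to\widehat{\Lambda_P}$ is a full embedding that preserves both pure injectivity and indecomposability; this simultaneously yields $N\in \Zg_{\widehat{\Lambda_P}}^{tf}$ and, read in the opposite direction together with the fact that $\widehat{R_P}$-torsionfreeness implies $R$-torsionfreeness, gives the moreover part. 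I expect the main obstacle to be verifying that $D$ is pure in $N$ as a $\Lambda$-submodule rather than merely as an $R$-submodule: this is precisely where the hypothesis that $A$ is semisimple (not just a finite-dimensional $Q$-algebra) is used, through the $A$-splitting of $QN$.
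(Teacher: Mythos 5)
Your treatment of the first alternative and of the splitting $N = N' \oplus N''$ over $\Lambda_P$ is correct and close to the paper in spirit, though the route differs in one respect: the paper's Step 2 shows directly that an $R_P$-divisible, $R_P$-torsionfree $\Lambda_P$-module is \emph{injective} over $\Lambda_P$ (so that $N''$ automatically splits off), whereas you prove only \emph{pure} injectivity of $D$ over $\Lambda$ and then compensate by verifying that $D$ is pure in $N$ via the $A$-linear projection from the semisimple $A$-module $QN$. Both arguments are valid and both invoke semisimplicity in essentially the same place; your route just does a bit of extra work. (In fact, since $\Lambda \to A$ is a flat ring epimorphism — a central localisation — the tensor–hom adjunction shows that any $A$-module, being injective over $A$, is already injective over $\Lambda$, which is the paper's sharper conclusion and renders the purity verification unnecessary.)

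The transfer between $\Lambda$ and $\widehat{\Lambda_P}$, however, contains a genuine gap. You assert that $R_P \to \widehat{R_P}$, and hence $\Lambda_P \to \widehat{\Lambda_P}$, is a ring epimorphism, and from this deduce that restriction of scalars along $\Lambda \to \widehat{\Lambda_P}$ is a full embedding preserving pure injectivity and indecomposability in both directions. But the completion map $R_P \to \widehat{R_P}$ is \emph{not} a ring epimorphism: it is flat, and flat ring epimorphisms out of a discrete valuation domain are (up to isomorphism) only the identity and the inclusion into the field of fractions; concretely $\widehat{R_P} \otimes_{R_P} \widehat{R_P}$ is far larger than $\widehat{R_P}$. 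Without the epimorphism you still get, by plain restriction of scalars, that any $\widehat{\Lambda_P}$-decomposition of $N$ is a $\Lambda$-decomposition and that pure injectivity over $\widehat{\Lambda_P}$ passes down to $\Lambda$. But the two remaining implications — that the module you constructed is pure injective \emph{over} $\widehat{\Lambda_P}$, and that an $R_P$-reduced indecomposable pure injective $\widehat{\Lambda_P}$-module stays indecomposable over $\Lambda$ — do not come for free, and these are exactly the contents of the paper's Steps 4 and 5, which appeal to \cite[Lemma, p.~1129]{MPT} and \cite[Remark 1, p.~1130]{MPT} respectively. Your Cauchy-sequence construction of the $\widehat{\Lambda_P}$-module structure on an $R_P$-reduced pure injective is fine and matches the paper's Step 4, but it establishes only the module structure, not pure injectivity over the completed order; and the ``moreover'' direction is missing the indecomposability argument entirely once the epimorphism crutch is removed.
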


Here {\sl $N$ being $R_P$-reduced} means that $\cap_{i=0}^{\infty} N P^i = 0$. Recall that $\widehat{\Lambda_P}$ is an order over $\widehat{R_P}$ in $\widehat{A}=\widehat{Q}\otimes_QA$ where $\widehat{Q}$ denotes the field of fractions of $\widehat{R_P}$.

\begin{proof}
We follow the proof of \cite[Theorem 2.1, pp. 1127-1130]{MPT}. For simplicity
we divide our argument in several steps. Let $N$ be an indecomposable pure injective $R$-module.

\smallskip \noindent

{\sl Step 1.} For some maximal ideal $P$ of $R$, $N$ is a module over $\Lambda_P$.

This step has already been covered in the discussion preceding this theorem.
Let $\pi$ denote a generator of the maximal ideal $P R_P$ of $R_P$.

\smallskip

{\sl Step 2.} Any $\Lambda_P$-module divisible and torsionfree over $R_P$ is
injective over $\Lambda_P$.

The proof is the same as \cite[Claim 2, p. 1128]{MPT}.

\smallskip

{\sl Step 3.} $N$, as an $R_P$-torsionfree module over $\Lambda_P$,
decomposes  over $\Lambda_P$ as $N'\oplus N''$ where $N''$ is
$R_P$-divisible (hence an $A$-module) and $N'$ is $R_P$-reduced, i.e.
$\cap_{i=1}^\infty N' P^i = \cap_{i=1}^\infty N' \pi^i = 0$.

To prove this claim, first we put \[ N'':=\{m\in N \, \mid \, \pi^n | m \text{ for all } n\in\N\}.\]

Take $n\in\N$, $m,m'\in N''$ and $r\in \Lambda$. Since $m,m'\in N''$ there exists $a,a'\in N$ such that $m=a\pi^n$ and $m'=a'\pi^n$. Thus $mr+m'=a\pi^nr+a'\pi^n=(ar+a')\pi^n$ because $\pi$ is central. Thus $N''$ is a submodule of $N$. Since $N''$ is $R_P$-divisible by definition, it is injective by Step 2
and thus a direct summand of $N$.

Let $N'$ be a complement of $N''$ in $N$. If $m\in \cap_{i=1}^\infty N'\pi^i$ then $\pi^n|m$ for all $n\in\N$ and thus $m\in N''$. So $m=0$.

This concludes Step 3.

\smallskip

As $N$ is indecomposable, either
\begin{itemize}
\item[(a)] $N$ is an $A$-module, or
\item[(b)] $N$ is $R_P$-reduced.
\end{itemize}
In the former case $N$ must be a simple $A$-module. So let us turn to (b). We assume from now
on that $N$ is $R_P$-reduced.

\smallskip

{\sl Step 4.}
Every $R_P$-reduced pure injective $\Lambda_P$-module $M$ can be equipped with a
$\widehat{\Lambda_P}$-module structure, and $M$ remains pure injective over
$ \widehat{\Lambda_P}$.

To see this, one proceeds exactly as in \cite[pp. 1128-1129]{ MPT}.
Suppose $r \in \widehat{\Lambda_P}$ and $m \in M$. For each $i \in \N$, let $r_i \in \Lambda_P$
satisfy $\pi^i | r-r_i$. For each $i \in \N$, look at the equation $x - m r_i= y_i \pi^i$. When $i$
ranges over $\N$, this set of equations is finitely solvable and so, since $M$ is pure injective,
solvable in $M$.
Let $n,n' \in M$ be such that $\pi^i | n-mr_i$ and $\pi^i | n'-mr_i$ for all $i$. Then
$n-n'\in \cap_{i=1}^\infty M \pi^i = 0$.

Define $mr$ to be the unique element $n \in M$ such that  $\pi^i | n-mr_i$ for all $i$. Note that this definition of
$mr$ does not depend on the particular choice of $r_i$ above. If $r, s \in \widehat{\Lambda_P}$
then $mr$ is the unique element $m_1$ in $M$ such that $\pi^i | m_1 - mr_i$ and $ms$ is the unique element $m_2$ in $M$ such that $\pi^i | m_2 - ms_i$. Thus $\pi^i | m_1+m_2 - m(r_i+s_i)$ for
all $i\in \N$. Hence $m_1+m_2=m(r+s)$.

That $M$ is pure injective as a $ \widehat{\Lambda_P}$-module is a consequence of
\cite[Lemma p. 1129]{MPT}.

\smallskip

Conversely, independently of the assumption that $N$ is $R_P$ reduced, every pure injective
$\widehat{\Lambda_P}$-module $N$ remains pure injective after restricting it over $\Lambda_P$.
This is simply because $\Lambda_P$ is a subring of its $P$-adic completion. For the same
reason any decomposable module over $\widehat{\Lambda_P}$ is decomposable over
$\Lambda_P$. On the other hand the following holds.

\smallskip

{\sl Step 5.} If $N$ is an $R_P$-reduced indecomposable pure injective
$\widehat{\Lambda_P}$-module, then $N$ is indecomposable as a $\Lambda$-module.

This is explained in \cite[Remark 1, p. 1130]{MPT}.

\end{proof}

The above theorem has shown that, when $A$ is semisimple, $R_P$-reduced $R$-torsionfree indecomposable pure injective modules are the same
over $\Lambda_P$ and over $\widehat{\Lambda_P}$. Moreover, the set of $R_P$-reduced $R$-torsionfree indecomposable pure injective modules are exactly those modules in the open set $\left(x=x /P\mid x\right)$. For this reason we will sometimes write $\Zg_{\widehat{\Lambda_P}}^{tf+red}$ for this open set.

Note that any module $N \in \Zg_\Lambda^{tf}$ that can be regarded as a $\Lambda_P$-module but
does not belong to $\left( x = x / P \mid x \right)$, that is, satisfies $N = PN$, is an $A$-module.
Furthermore any two different sets $\left( x=x / P \mid x\right) \cap \Zg_\Lambda^{tf}$ are disjoint from each other. Thus Theorem \ref{spectrum} asserts that $\Zg_\Lambda^{tf}$ is the disjoint union of $\Zg_A$ and of the various $\Zg_{\widehat{\Lambda_P}}^{tf+red}$ where $P$ ranges over maximal ideals of $R$.

Now let us deal with the topology. Notice that for every maximal ideal $P$
of $R$ the embedding of $\Lambda$ into $\widehat{\Lambda_P}$ induces an inclusion
of the $R_P$-reduced part of $\left (x = x / P \mid x \right) \cap \Zg^{tf}_\Lambda =
\Zg_{\widehat{\Lambda_P}}^{tf+red}$ into $\Zg_\Lambda^{tf}$.
By the same argument given in \cite[Theorem 2.2]{MPT} over group rings, this inclusion
is homeomorphic:

\begin{theorem}\label{top} Suppose that $A=Q\Lambda$ and $\widehat{A}=\widehat{Q}\Lambda$ (where $\widehat{Q}$ is the field of fractions of $\widehat{R_P}$) are semisimple. For every maximal ideal $P$ of $R$,
$(x=x / P \mid x) \cap \Zg_\Lambda^{tf} = \Zg_{\widehat{\Lambda_P}}^{tf+red}$
has the same topology whether viewed as a subspace of $\Zg_{\widehat{\Lambda_P}}^{tf}$
or of $\Zg_\Lambda^{tf}$.
\end{theorem}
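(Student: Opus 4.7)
The plan is to follow closely the strategy of \cite[Theorem 2.2]{MPT}. First I would note that the ring embedding $\Lambda \hookrightarrow \widehat{\Lambda_P}$ induces via restriction of scalars a set-theoretic injection $\Zg_{\widehat{\Lambda_P}}^{tf+red} \hookrightarrow \Zg_\Lambda^{tf}$ whose image is the open set $(x=x/P\mid x) \cap \Zg_\Lambda^{tf}$ (this is implicit in Theorem \ref{spectrum}). This injection is continuous for a trivial reason: every $\Lambda$-pp-formula is literally a $\widehat{\Lambda_P}$-pp-formula and defines the same subgroup on any $\widehat{\Lambda_P}$-module, so preimages of $\Lambda$-basic opens are $\widehat{\Lambda_P}$-basic opens intersected with $\Zg_{\widehat{\Lambda_P}}^{tf+red}$. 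Thus the $\Lambda$-induced topology on $\Zg_{\widehat{\Lambda_P}}^{tf+red}$ is coarser than the $\widehat{\Lambda_P}$-one, and the content of the theorem is the reverse comparison.

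The substantive task is therefore: for every $\widehat{\Lambda_P}$-pp-pair $(\phi/\psi)$ and every $N \in (\phi/\psi) \cap \Zg_{\widehat{\Lambda_P}}^{tf+red}$ with witness $m \in \phi(N)\setminus\psi(N)$, produce a $\Lambda$-pp-pair $(\alpha/\beta)$ with $N \in (\alpha/\beta)$ and $(\alpha/\beta) \cap \Zg_{\widehat{\Lambda_P}}^{tf+red} \subseteq (\phi/\psi)$. The tool is $\pi$-adic approximation of the defining matrices. If $M_\phi, M_\psi$ are the matrices defining $\phi,\psi$ over $\widehat{\Lambda_P}$, choose matrices $M_{\phi,n}, M_{\psi,n}$ over $\Lambda_P$ with entrywise differences in $\pi^n\widehat{\Lambda_P}$ and define the relaxed pp-formulas
\[
\phi_n(x) \equiv \exists \mathbf{y}\,\exists \mathbf{z}\ M_{\phi,n}(x,\mathbf{y})^T = \pi^n \mathbf{z},
\]
and similarly $\psi_n$. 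After clearing central denominators from $R\setminus P$ (which act invertibly on every module in $\Zg_{\widehat{\Lambda_P}}^{tf+red}$), these become honest $\Lambda$-pp-formulas.

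The crucial identity is $\phi(N') = \bigcap_n \phi_n(N')$ (and likewise for $\psi$) on every $N' \in \Zg_{\widehat{\Lambda_P}}^{tf+red}$. The inclusion $\phi(N') \subseteq \phi_n(N')$ is immediate from the approximation. For the other, if $m' \in \bigcap_n \phi_n(N')$ then the family of pp-conditions on $\mathbf{y}$ saying $M_{\phi,n}(m',\mathbf{y})^T \in \pi^n N'^k$ is finitely satisfiable (a tuple realising the largest index of a finite subsystem works, using $M_{\phi,i}-M_{\phi,j}\in\pi^{\min(i,j)}\widehat{\Lambda_P}$); pure-injectivity of $N'$ supplies a common realisation $\mathbf{y}^*$; and $R_P$-reducedness forces $M_\phi(m',\mathbf{y}^*)^T = 0$, i.e.\ $m' \in \phi(N')$. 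This mirrors Step 4 of the proof of Theorem \ref{spectrum}.

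Given the identity, the pair $(\alpha/\beta)$ is assembled as follows. Since $m \notin \psi(N) = \bigcap_n \psi_n(N)$, pick $n_0$ with $m \notin \psi_{n_0}(N)$ and set $\beta := \psi_{n_0}$; then $\beta \geq \psi$ on $\Zg_{\widehat{\Lambda_P}}^{tf+red}$ and $m\notin\beta(N)$. For $\alpha$, the $\Lambda$-pp-type $p$ of $m$ in $N$ contains every $\phi_n$, so by the identity any realisation of $p$ in $N''\in\Zg_{\widehat{\Lambda_P}}^{tf+red}$ lies in $\phi(N'')$; equivalently, $p$ implies $\phi$ on the $\widehat{\Lambda_P}$-definable subcategory generated by $\Zg_{\widehat{\Lambda_P}}^{tf+red}$. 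First-order compactness then yields a finite $p_0 \subseteq p$ already implying $\phi$ there; set $\alpha := \bigwedge p_0$. Then $m \in \alpha(N)\setminus\beta(N)$, and any $N'' \in (\alpha/\beta)\cap\Zg_{\widehat{\Lambda_P}}^{tf+red}$ produces a witness in $\alpha(N'')\subseteq\phi(N'')$ but outside $\beta(N'')\supseteq\psi(N'')$, so $N''\in(\phi/\psi)$. The main obstacle is the approximation identity itself; once it is in place, openness follows routinely from pure-injectivity and compactness.
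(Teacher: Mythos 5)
Your proposal captures the two essential ingredients of the paper's argument for Proposition~\ref{top2} (which is the paper's actual proof of the theorem): the $\pi$-adic approximation $\phi \mapsto (\phi_n)_n$ of a $\widehat{\Lambda_P}$-pp-formula by $\Lambda$-pp-formulas, and the identity $\phi(N')=\bigcap_n\phi_n(N')$ for reduced pure injective $N'$, proved via pure-injectivity plus reducedness exactly as in the paper. Where you diverge is in how the identity is exploited. The paper shows that the descending chain $(\phi_n)$ actually \emph{stabilises} at $\phi$ modulo $T^{tf}_{\widehat{\Lambda}}$, via the filter/ideal construction of an irreducible pp-type from \cite[4.33]{Preb1}, and packages this as a lattice isomorphism $[\pi\mid x, x=x]_{\Tf_\Lambda}\cong[\pi\mid x, x=x]_{\Tf_{\widehat{\Lambda}}}$; the homeomorphism then falls out because these intervals index a basis of opens. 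You instead argue pointwise, extracting from the pp-type of a witness $m$ a finite $p_0$ that isolates $N$ inside $(\phi/\psi)$. The pointwise route avoids proving stabilisation, but at the cost of not producing the lattice isomorphism, which the paper uses elsewhere (e.g.\ for the m-dimension transfer in Corollary~\ref{mdimloctoglob}).

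There is one genuine gap, at the word ``equivalently.'' You move from ``every realisation of $p$ in $N''\in\Zg_{\widehat{\Lambda_P}}^{tf+red}$ lies in $\phi(N'')$'' to ``$p$ implies $\phi$ on the definable subcategory generated by $\Zg_{\widehat{\Lambda_P}}^{tf+red}$.'' These are not equivalent. That definable subcategory is all of $\Tf_{\widehat{\Lambda_P}}$ --- the Ziegler closure of the reduced part is $\Zg_{\widehat{\Lambda_P}}^{tf}$, because the lattices are dense --- so its indecomposable pure injectives include the $\widehat R_P$-divisible points, i.e.\ the simple $\widehat A$-modules. On a divisible torsionfree $N''$ every $\phi_n$ defines all of $N''$, so $p$ imposes no constraint there and $p\to\phi$ can certainly fail unless $\phi(N'')=N''$. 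The compactness (really, the filter/ideal argument) needs the implication to hold on \emph{all} indecomposable pure injectives in the definable subcategory, so without something ensuring it on the divisibles, no finite $p_0$ need exist. The paper avoids this by assuming $\pi\mid x\le\phi$ from the outset, which forces $\phi(N'')=N''$ on divisibles and makes the implication trivial there. You should therefore restrict to $\widehat{\Lambda_P}$-pp-pairs $\phi/\psi$ with $\pi\mid x\le\psi<\phi$; as the paper observes, such pairs already give a basis for the open set $\left(x=x/\pi\mid x\right)$, so this is no loss. With that restriction added, your argument goes through. A smaller remark: what you call first-order compactness is, precisely, the irreducible-type construction of \cite[4.33]{Preb1}, which is what licenses passing from ``holds on indecomposable pure injectives'' to a finitary pp-implication; it is worth citing it rather than invoking compactness generically.
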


Here we give a different proof of a slightly stronger claim.

First of all, observe that every pp-formula $\alpha$ of $L_{\Lambda_P}$, $\alpha \doteq
\exists \mathbf{y} (\mathbf{x} S = \mathbf{y} T)$, with $S$, $T$ matrices of suitable sizes
with entries in $\Lambda_P = R_P \Lambda$, can be
translated into a pp-formula $\alpha'$ of $L_\Lambda$ equivalent to $\alpha$
in all $R$-torsionfree $\Lambda_P$-modules. To build $\alpha'$, calculate the
product $r$ of all multiplicative inverses of scalars of $R$ occurring in the
entries of $S$ and $T$. Then $r \in R \setminus P$, in particular $r \neq 0$.
Now multiply the previous scalars by $r$ and get $\alpha'$ as required, as
$\exists \mathbf{y} (\mathbf{x} rS = \mathbf{y} rT)$. In fact the entries of
$r S$ and $r T$ are in $\Lambda$. The torsionfree condition guarantees the
equivalence to $\alpha$. That is, for every $R$-torsionfree $\Lambda_P$-module
$M$ and $\mathbf{m}, \mathbf{n}$ in $M$,
$r (\mathbf{m} S - \mathbf{n} T) = 0$ if and only if $\mathbf{m} S - \mathbf{n} T = 0$.

Thus we have to compare $\Lambda_P$ and $\widehat{\Lambda_P}$. We may now assume that $R$ is a discrete valuation
domain and $\pi$ is a generator of its maximal ideal $P$; $Q$ is still the field of
fraction of $R$, $A$ a finite dimensional $Q$-algebra, $\Lambda$ an order over $R$ in
$A$, $\widehat{\Lambda}$ its $\pi$-adic completion. We also assume both
$A$ and $\widehat{A}$ semisimple, which is true, in particular when $A$ is separable.
Under these conditions we prove the following.

\begin{prop}\label{top2}
Suppose that $R$ is a discrete valuation domain whose maximal ideal is generated by $\pi$ and that both $A$ and $\widehat{A}$ are semisimple. The closed intervals $[\pi  \mid x,x=x]_{\Tf_\Lambda}$ and $[\pi \mid x,x=x]_{\Tf_{\widehat{\Lambda}}}$ are isomorphic as lattices.
Moreover the Ziegler open sets $\left(x=x/ \pi \mid x \right)$ in $\Zg^{tf}_\Lambda$ and
$\left(x=x/\pi \mid x \right)$ in $\Zg^{tf}_{\widehat{\Lambda}}$ are homeomorphic.
\end{prop}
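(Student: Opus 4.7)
The approach is to prove the lattice isomorphism first and then obtain the homeomorphism of Ziegler open sets as a consequence.

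For the lattice isomorphism, since $\Lambda \subseteq \widehat{\Lambda}$, every pp-formula of $L_\Lambda$ is also a pp-formula of $L_{\widehat{\Lambda}}$, giving a natural $(+,\wedge)$-preserving map $\Phi\colon \pp_\Lambda^1 \to \pp_{\widehat{\Lambda}}^1$ fixing the endpoints $\pi \mid x$ and $x=x$. I would show that $\Phi$ descends to a bijection between the stated intervals. Well-definedness and injectivity both reduce to the following observation from Theorem \ref{spectrum}: any $N \in \Zg_\Lambda^{tf}$ is either $R$-divisible, hence an $A$-module on which every $\phi \geq \pi \mid x$ collapses to $x=x$, or $R$-reduced, in which case Steps~4 and~5 of the theorem's proof equip $N$ with a canonical $\widehat{\Lambda}$-module structure extending its $\Lambda$-action, and for any $\Lambda$-pp-formula the subgroups defined in $N$ over $\Lambda$ and over $\widehat{\Lambda}$ coincide. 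So the equivalence of two $\Lambda$-formulas in $[\pi\mid x, x=x]$ modulo $\Tf_\Lambda$ is the same as modulo $\Tf_{\widehat{\Lambda}}$.

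Surjectivity of $\Phi$ is the main technical content. Given $\alpha(x) \doteq \exists \mathbf{y}\,(x,\mathbf{y})T = 0$ with $T$ a matrix over $\widehat{\Lambda}$, I would approximate $T$ by $S \in M(\Lambda)$ with each entry of $T - S$ in $\pi \widehat{\Lambda}$, and form the relaxed $\Lambda$-pp-formula $\alpha'(x) \doteq \exists \mathbf{y}\,\exists \mathbf{z}\,\bigl((x,\mathbf{y})S = \pi \mathbf{z}\bigr)$. Centrality of $\pi$ gives $\alpha \leq \alpha'$ at once. The reverse inequality $\alpha' \leq \alpha + (\pi\mid x)$ in $\Tf_{\widehat{\Lambda}}$ is the crux and is essentially a $\pi$-adic lifting: for a pure injective $R$-reduced $\widehat{\Lambda}$-module $N$ and a witness of $\alpha'$, iteratively refined approximations $S_n \in M(\Lambda)$ with $S_n \equiv T \pmod{\pi^n}$ produce a chain of $\Lambda$-pp systems whose finite solvability (via pure injectivity) yields a Cauchy sequence of tuples whose $\pi$-adic limit, well-defined by $R$-reducedness, is an exact witness of $\alpha$ at some $\tilde x \equiv x \pmod{N\pi}$. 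This construction is modelled on Step~4 of the proof of Theorem \ref{spectrum}, where the $\widehat{\Lambda}$-action on $R$-reduced pure injectives is built by the same $\pi$-adic Cauchy mechanism.

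For the homeomorphism of Ziegler open sets, the underlying point sets of $(x=x / \pi\mid x) \cap \Zg_\Lambda^{tf}$ and $(x=x / \pi\mid x) \cap \Zg_{\widehat{\Lambda}}^{tf}$ coincide by Theorem \ref{spectrum}; on each side the basic opens of the relative topology are indexed by pp-pairs from the interval $[\pi\mid x, x=x]$ modulo the respective theory, so the lattice isomorphism transports basic opens bijectively. The main obstacle is the surjectivity step, which encodes the completion of $\Lambda$ at the level of pp-definable subgroups and requires the interplay of pure injectivity (to solve the refinement systems) with $R$-reducedness (for uniqueness of the limit); everything else in the argument is a formal consequence of Theorem \ref{spectrum}.
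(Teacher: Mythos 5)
The setup (lattice isomorphism first, then homeomorphism; well-definedness and injectivity of the restriction map via Theorem~\ref{spectrum}, using that each $N\in\Zg^{tf}_\Lambda$ is either a simple $A$-module or carries a canonical $\widehat\Lambda$-structure) is essentially the paper's. The gap is in surjectivity. You approximate the matrix $T$ over $\widehat\Lambda$ by $S$ over $\Lambda$ once, at level $\pi$, and assert that the relaxed formula $\alpha'(x)\doteq\exists\mathbf y\,\exists\mathbf z\,((x,\mathbf y)S=\pi\mathbf z)$ already equals $\alpha$ on the interval $[\pi\mid x,\,x=x]$, i.e.\ that $\alpha'\leq\alpha+(\pi\mid x)$ holds over $\Tf_{\widehat\Lambda}$. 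Your justification is a $\pi$-adic lifting: from a witness of $\alpha'$ in a reduced pure injective $N$, refine through approximations $S_n\equiv T\pmod{\pi^n}$ and take a Cauchy limit. But the inductive step of such a lifting is not automatic: if $(x,\mathbf y)T=\pi w$, finding $(x',\mathbf y')\equiv(x,\mathbf y)\pmod\pi$ with $(x',\mathbf y')T\in\pi^2 N^{\ldots}$ requires solving $w\equiv -(\mathbf a,\mathbf b)T\pmod{\pi N^{\ldots}}$ for some $(\mathbf a,\mathbf b)$, which there is no reason to be able to do. A witness of the level-$1$ relaxation does not feed into the finite solvability of the level-$n$ systems for $n>1$, so pure injectivity cannot be invoked to produce the Cauchy sequence. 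In effect you are asserting that the decreasing chain $\phi_1\geq\phi_2\geq\cdots$ of the paper's approximations stabilizes at $i=1$, which is not true in general.

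The paper avoids this by keeping the whole decreasing chain $(\phi_i)_i$ of $\Lambda$-formulas (level-$i$ approximation with level-$i$ slack), proving $\bigcap_i\phi_i(N)=\phi(N)$ for every indecomposable pure injective $\widehat\Lambda$-module $N$ — here pure injectivity is used to solve the \emph{entire} tower of conditions at once, and reducedness then kills the $\bigcap_i N\pi^i$ ambiguity — and then invoking a compactness argument (\cite[4.33]{Preb1}): if $\phi<\phi_i$ for all $i$, one builds an irreducible pp-type $p$ containing every $\phi_i$ but not $\phi$, and the hull $N(p)$ contradicts $\bigcap_i\phi_i=\phi$. This shows $\phi_i=\phi$ for \emph{some} $i$ depending on $\phi$, which is all that surjectivity needs. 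Your argument needs to be replaced by (or reduced to) something of this form; as written, the crux claim is unsupported.
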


\begin{proof}
For $1\leq j\leq n$ and $1\leq k\leq l$, let $s_j,r_{jk} \in \widehat{\Lambda}$, and let $\phi$ be
the pp-formula
\[\exists \ y_1 \exists y_2 \ldots \exists y_l \bigwedge_{j=1}^n ( xs_j+\sum_{k=1}^ly_kr_{jk}=0 ).\]
Further suppose that $\pi|x\leq \phi$.

For $i\in\N$, let $s^i_j,r^i_{jk}\in\Lambda$ be such that $\pi^i \mid s_j^i-s_j$ and
$\pi^i \mid r_{jk}^i-r_{jk}$. For each $i\in \N$, let $\phi_i$ be the pp-formula
\[\exists \ y_1 \exists y_2 \ldots \exists y_l \bigwedge_{j=1}^n \pi^i|(xs^i_j+\sum_{k=1}^ly_kr^i_{jk}).\]

Clearly, $\phi\leq \phi_i$ and $\phi_i\geq \phi_{i+1}$ for each $i\in \N$.

We now show that for all indecomposable pure injective $\widehat{\Lambda}$-modules $N$, $\bigcap_{i\in\N}\phi_i(N)=\phi(N)$.

Since $\widehat{A}$ is semisimple, every indecomposable pure injective $\widehat{R}$-torsionfree $\widehat{\Lambda}$-module $N$ is either $R_P$-reduced or $\widehat{R}$-divisible. In the
latter case, since $\phi_i\geq\phi\geq \pi|x$, $\phi_i(N)=\phi(N)=N$.
Hence assume that $N$ is reduced. Suppose that $m\in \phi_i(N)$ for all $i\in\N$. Then the infinite system of linear equations
\[ms_j^i+\sum_{k=1}^ly_kr^i_{jk}=z_j^i\pi^i\] where $i\in\N$, $1\leq j\leq n$ and $1\leq k\leq l$, is finitely solvable. Consequently, since $N$ is pure injective, it is solvable say with $y_k=a_k\in N$.
So for each $1\leq j\leq n$, $ms_j+\sum_{k=1}^la_kr^i_{jk}\in N\pi^i$ for all $i\in\N$. Thus, since $N$ is reduced, $ms_j+\sum_{k=1}^la_kr^i_{jk}=0$. Thus $m\in\phi(N)$.

We now show that there exists an $i\in\N$ such that $\phi_i=\phi$.
Suppose that $\phi<\phi_i$ for all $i\in\N$ with respect to the theory of $\widehat{R}$-torsionfree $\widehat{\Lambda}$-modules. Let $F$ be the filter generated by $\{\phi_i \, \mid \, i\in \N\}$
and $I$ be the ideal generated by $\phi$. Since $F\cap I$ is empty, by \cite[4.33]{Preb1}, we can construct an irreducible pp-type $p$ such that $\phi_i\in p$ for all $i\in \N$ and $\phi\notin p$. Now, $N(p)$, the pure injective hull of $p$, has an element $m$ such that $m\in\phi_i(N(p))$ for
all $i\in\N$ but $m\notin \phi(N(p))$. This contradicts the fact that $\phi(N)=\bigcap_{i\in\N}\phi_i(N)$ for all indecomposable pure injective $\widehat{R}$-torsionfree
$\widehat{\Lambda}$-modules $N$. Thus $\phi_i=\phi$ for some $i\in \N$.

Restriction of scalars gives a lattice homomorphism from $\pp^1_{\Lambda}(\Tf_\Lambda)$ to $\pp^1_{\widehat{\Lambda}}(\Tf_{\widehat{\Lambda}})$. We have shown that the restriction
of this map to $[\pi \mid x,x=x]_{\Tf_R(\Lambda)}$ has image $[\pi \mid x, x=x]_{\Tf_{\widehat{\Lambda}}}$. Suppose that $\phi> \psi\geq \pi|x$ in $\pp^1_{\Lambda}(\Tf_\Lambda)$.
There exists an indecomposable pure injective $\Lambda$-module $N$ such that
$\phi(N)\supsetneq
\psi(N)$. Since $N\pi\neq N$, this implies that $N$ is $R_P$-reduced and hence $N$ can
be equipped with the structure of a $\widehat{\Lambda}$-module. Hence
$\phi>\psi$ as pp-formulas in $\pp^1_{\widehat{\Lambda}}(\Tf_{\widehat{\Lambda}})$.
Thus we have shown that $[\pi \mid x,x=x]_{\Tf_\Lambda}$
and $[\pi \mid x, x=x]_{\Tf_{\widehat{\Lambda}}}$ are isomorphic as lattices.

Since both $A$ and $\widehat{A}$ are semisimple, the open sets
$\left(x=x/\pi|x\right)\subseteq \Zg^{tf}_\Lambda$ and $\left(x=x/\pi|x\right)\subseteq \Zg^{tf}_{\widehat{\Lambda}}$ contain exactly the reduced indecomposable
pure injective modules. Thus restriction of scalars gives a bijection from $\left(x=x/\pi \mid x\right) \subseteq \Zg^{tf}_{\widehat{\Lambda}}$ to $\left(x=x/\pi \mid x\right)\subseteq \Zg^{tf}_\Lambda$. Since the sets of
the form $\left(\phi/\psi\right)$ for $\pi \mid x \leq \psi < \phi$ in $\pp^1_{\Lambda}(\Tf_\Lambda)$
(respectively $\pi \mid x \leq \psi < \phi$ in $\pp^1_{\widehat{\Lambda}} (\Tf_{\widehat{\Lambda}})$) give a basis for $\left(x=x/\pi \mid x\right)\subseteq \Zg^{tf}_\Lambda$ (respectively $\left(x=x/\pi \mid x \right)\subseteq \Zg^{tf}_{\widehat{\Lambda}}$), this bijection is a homeomorphism.

\end{proof}

\begin{prop}\label{cb} Let $A$ be semisimple. Then
$\Zg_\Lambda^{tf}$ has Cantor-Bendixson rank if and only if $\Zg_A$ and each  $\Zg_{\widehat{\Lambda_P}}^{tf+red}$, with $P$ a maximal ideal of $R$, have
Cantor-Bendixson rank.
\end{prop}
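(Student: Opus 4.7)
The plan is to exploit the disjoint decomposition
\[\Zg_\Lambda^{tf} \; = \; \Zg_A \; \sqcup \; \bigsqcup_P \Zg_{\widehat{\Lambda_P}}^{tf+red}\]
furnished by Theorem \ref{spectrum}, where $P$ ranges over maximal ideals of $R$. Each piece $\Zg_{\widehat{\Lambda_P}}^{tf+red}=(x=x/P\mid x)\cap \Zg_\Lambda^{tf}$ is open in $\Zg_\Lambda^{tf}$, whereas $\Zg_A$ is closed. By Theorem \ref{top}, the topology on $\Zg_{\widehat{\Lambda_P}}^{tf+red}$ is the same whether viewed inside $\Zg_\Lambda^{tf}$ or inside $\Zg_{\widehat{\Lambda_P}}^{tf}$, so ``having Cantor-Bendixson rank'' is unambiguous for the pieces.

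For the forward direction I would invoke two standard facts about Cantor-Bendixson derivatives, each proved by a routine transfinite induction: for every open subset $U$ of a topological space $T$, $U^{(\xi)}=U\cap T^{(\xi)}$ for all ordinals $\xi$; and for every closed subset $Z$ of $T$, $Z^{(\xi)}\subseteq Z\cap T^{(\xi)}$. Applied to the pieces of the decomposition above, if $\Zg_\Lambda^{tf}$ has Cantor-Bendixson rank $\alpha$ then so do $\Zg_A$ and each $\Zg_{\widehat{\Lambda_P}}^{tf+red}$, with ranks bounded by $\alpha$.

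For the converse, write $T=\Zg_\Lambda^{tf}$, let $\alpha_P$ be the Cantor-Bendixson rank of $\Zg_{\widehat{\Lambda_P}}^{tf+red}$ and set $\gamma=\sup_P \alpha_P$; this is a well-defined ordinal, since the maximal ideals of $R$ form a set. The open-subspace identity gives $T^{(\gamma)}\cap \Zg_{\widehat{\Lambda_P}}^{tf+red}=\emptyset$ for every $P$, whence $T^{(\gamma)}\subseteq \Zg_A$. Since $T^{(\gamma)}$ is closed in $T$ and contained in $\Zg_A$, it is closed in $\Zg_A$, and its subspace topology from $T$ coincides with that inherited from $\Zg_A$. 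If $\beta$ denotes the Cantor-Bendixson rank of $\Zg_A$, the closed-subspace inequality applied to $T^{(\gamma)}\subseteq \Zg_A$ yields $(T^{(\gamma)})^{(\beta)}=\emptyset$, which unwinds to $T^{(\gamma+\beta)}=\emptyset$. So $\Zg_\Lambda^{tf}$ has Cantor-Bendixson rank at most $\gamma+\beta$.

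The main point to watch is the asymmetry between open and closed subspaces: an isolated point of $\Zg_A$ need not remain isolated inside $\Zg_\Lambda^{tf}$, because points from the reduced pieces may accumulate onto it, so one cannot simply paste together the ranks. The strategy above circumvents this by using only the open parts during the first $\gamma$ derivatives to expel everything outside $\Zg_A$, and only then invoking the intrinsic Cantor-Bendixson rank of $\Zg_A$ on the residual closed set $T^{(\gamma)}$.
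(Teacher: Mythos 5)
Your argument is correct and essentially follows the route the paper gestures at: decompose $\Zg_\Lambda^{tf}$ into the closed piece $\Zg_A$ and the open pieces $\Zg_{\widehat{\Lambda_P}}^{tf+red}$, derive past the open pieces, and then use the residual closed set. Where you diverge from the paper's suggested proof (which is to adapt the end of Lemma~\ref{genmdim}) is in the toolkit: the paper's route through Lemma~\ref{genmdim} establishes the isolation condition via minimal pairs and then invokes Prest's equivalence of Cantor-Bendixson rank with m-dimension and the fact that the open pairs collapse on $X^{(\alpha)}$ once $\alpha$ exceeds the m-dimensions of the intervals $[\psi_i,\phi_i]_X$. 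You instead argue purely topologically, using the elementary transfinite-induction identities $U^{(\xi)}=U\cap T^{(\xi)}$ for $U$ open and $Z^{(\xi)}\subseteq Z\cap T^{(\xi)}$ for $Z$ closed, and this is arguably the cleaner route for a statement that is about Cantor-Bendixson rank alone and says nothing about m-dimension. The forward direction (closed and open subspaces inherit the rank) and the converse (derive $\gamma=\sup_P\alpha_P$ times to land inside $\Zg_A$, then continue for $\beta$ more steps) are both sound, and your closing remark about the asymmetry between open and closed subspaces is exactly the point that makes the naive pasting argument fail.

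One small caveat worth flagging: the appeal to Theorem~\ref{top} to say ``having Cantor-Bendixson rank is unambiguous for the pieces'' requires $\widehat{A}$ to be semisimple as well as $A$, whereas the proposition as stated assumes only $A$ semisimple. This is harmless here, because your actual argument never switches ambient space: you work throughout with $\Zg_{\widehat{\Lambda_P}}^{tf+red}$ as an open subspace of $\Zg_\Lambda^{tf}$, and the open-subspace identity is all you need. But if you want to interpret the proposition as a statement about $\Zg_{\widehat{\Lambda_P}}^{tf+red}$ in its $\Zg_{\widehat{\Lambda_P}}^{tf}$-topology, you should either add the hypothesis that $\widehat{A}$ is semisimple (for instance by assuming $A$ separable) or drop the appeal to Theorem~\ref{top} and read the pieces intrinsically inside $\Zg_\Lambda^{tf}$.
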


The proof is similar to that at the end of the next Lemma \ref{genmdim}. So we ask the
reader to wait for that lemma, and a few lines, to see its details.

Next let us deal with the m-dimension of $\pp_\Lambda^1 (\Tf)$. Recall the connection
between m-dimension and Cantor-Bendixson rank \cite[Corollary 5.3.60]{Preb2}, at least
under the {\sl isolation condition}. The latter requires that for every closed subset $\mathcal{C}$ of
$\Zg_\Lambda$ (indeed, in our case, of $\Zg_\Lambda^{tf}$) and every isolated point $N$ of $\mathcal{C}$,
there is a pp-pair $\phi / \psi$ which is minimal such that $(\phi / \psi) \cap \mathcal{C} = \{ N \}$
(see \cite[5.3.2]{Preb2}).

We first prove a general statement which we were not able to find elsewhere in the literature.

\begin{lemma}\label{genmdim}
Let $S$ be an arbitrary ring, $X$ a closed subset of $\Zg_S$ and $\{\phi_i/\psi_i \ | \ i\in I\}$ a set of pp-pairs. Then $\pp^1_S(X)$ has m-dimension if and only if $[\psi_i,\phi_i]_X$ has m-dimension for all $i\in I$ and $\pp_S^1(X\backslash\bigcup_{i\in I}\left(\phi_i/\psi_i\right))$ has m-dimension.
\end{lemma}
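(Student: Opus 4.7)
For the forward direction, I would appeal to two standard stability properties of m-dimension: it is inherited by closed intervals of a lattice, and by the targets of surjective lattice homomorphisms. Each $[\psi_i,\phi_i]_X$ is a closed interval of $\pp^1_S(X)$, and restriction of pp-formulas along the inclusion $X'\hookrightarrow X$, where $X':=X\setminus\bigcup_{i\in I}(\phi_i/\psi_i)$, induces a surjective lattice homomorphism $\pp^1_S(X)\twoheadrightarrow\pp^1_S(X')$. So both types of lattice inherit m-dim from $\pp^1_S(X)$.

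For the converse, assuming each $L_i:=[\psi_i,\phi_i]_X$ and $L':=\pp^1_S(X')$ have m-dim, I would argue by contradiction using the classical characterization (see \cite[Ch.~7]{Preb1}) that a modular lattice has m-dim if and only if it contains no sub-chain order-isomorphic to $(\Q,\leq)$. Suppose $L:=\pp^1_S(X)$ contains such a chain $C=\{q_r:r\in\Q\}$ with $q_r<q_s$ when $r<s$. The restriction map $\rho:L\twoheadrightarrow L'$ is monotone, so the fibres of $\rho|_C$ are convex in $C$: either every fibre is a singleton, in which case $\rho(C)$ is densely ordered in $L'$, contradicting the m-dim of $L'$; or $\rho$ is constant on some $[r_0,r_1]\cap\Q$, and, restricting $C$ accordingly, we may assume $\rho|_C$ is constant. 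Then $(q_s/q_r)\cap X\subseteq\bigcup_{i\in I}(\phi_i/\psi_i)$ for every $r<s$ in $C$. Since the basic open $(q_s/q_r)$ is quasi-compact in $\Zg_S$ and $X$ is closed, $(q_s/q_r)\cap X$ is quasi-compact, so finitely many indices $i_1,\ldots,i_n$ cover it. A Ramsey-style pigeonhole on ordered pairs from $\Q$ (via the partition relation $\Q\to(\Q)^2_n$ for the dense linear order) then yields a further densely ordered sub-chain of $C$ all of whose pairs admit a witness in a single $(\phi_{i_0}/\psi_{i_0})$.

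The main obstacle is the last step: converting this into a densely ordered sub-chain inside $L_{i_0}$, so as to contradict its m-dim. The natural candidate $q_r\mapsto(q_r\wedge\phi_{i_0})+\psi_{i_0}\in L_{i_0}$ is monotone, but a witness $N$ to $(q_s/q_r)\cap(\phi_{i_0}/\psi_{i_0})$ need not witness strict inequality of the restricted formulas, since the ``excess'' of $q_s$ over $q_r$ at $N$ may lie outside $\phi_{i_0}(N)$ modulo $\psi_{i_0}(N)$. Closing this gap will require either iterating the reduction, appealing to the well-foundedness of $\text{m-dim}(L')$ for termination, or else replacing the candidate restriction with a more refined one via a modular-law calculation in the subgroup lattices of the witnesses, combined with an additional Ramsey-style refinement of $C$.
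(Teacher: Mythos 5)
Your forward direction agrees with the paper. Your converse takes a genuinely different route — densely ordered chains plus compactness plus Ramsey — and it has two real gaps.

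The Ramsey step does not work as stated: the partition relation $\Q\to(\Q)^2_n$ is \emph{false} for $n\geq 2$. Sierpi\'nski's colouring (fix a well-order on $\Q$ and colour a pair of rationals by whether the well-order agrees with the dense order on it) has no monochromatic subset of order type $\Q$, since any such set would be well- or reverse-well-ordered by $<$. So finitely colouring the pairs of $C$ does not produce a densely ordered subchain all of whose pairs are witnessed through a single $(\phi_{i_0}/\psi_{i_0})$. (There is also a smaller wrinkle: the finite subcover you extract depends on the pair $(r,s)$; you would first want to fix $r_0<s_0$, cover $(q_{s_0}/q_{r_0})\cap X$ once, and colour only pairs inside $(r_0,s_0)$.) And even granting a single index $i_0$, the obstacle you flag yourself is genuine: a witness $N$ to $q_r(N)\subsetneq q_s(N)$ together with $\psi_{i_0}(N)\subsetneq\phi_{i_0}(N)$ does not yield $(q_r(N)\cap\phi_{i_0}(N))+\psi_{i_0}(N)\subsetneq(q_s(N)\cap\phi_{i_0}(N))+\psi_{i_0}(N)$; easy subgroup-lattice examples show the monotone map $q\mapsto(q\wedge\phi_{i_0})+\psi_{i_0}$ can collapse strictly increasing pairs, and none of the three repairs you sketch is carried out.

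The paper's proof is completely different and avoids dense chains. Under the hypotheses it shows every $N\in X$ has an $N$-minimal pair: if $N$ avoids all $(\phi_i/\psi_i)$ then $\text{cl}(N)\subseteq X\setminus\bigcup_i(\phi_i/\psi_i)$, so $\pp^1_S(N)$ has m-dimension and hence (being nontrivial) a simple interval; if $N\in(\phi_i/\psi_i)$ then $[\psi_i,\phi_i]_N$ has m-dimension and hence a simple interval. This is exactly the isolation condition for $X$, under which m-dimension of $\pp^1_S(X)$ is equivalent to existence of Cantor--Bendixson rank of $X$ (Prest, \emph{Purity, Spectra and Localization}, 5.3.16 and 5.3.60). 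Then, taking $\alpha$ above every $\text{m-dim}\,[\psi_i,\phi_i]_X$, the derivative $X^{(\alpha)}$ avoids every $(\phi_i/\psi_i)$, so lies in $X\setminus\bigcup_i(\phi_i/\psi_i)$, whose CB rank exists by hypothesis; hence $X$ has CB rank and we are done. I would recommend switching to this route rather than trying to repair the chain argument.
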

\begin{proof}
If $\pp^1_S(X)$ has m-dimension then $\pp_S^1(X\backslash\bigcup_{i\in I}\left(\phi_i/\psi_i\right))$ has m-dimension since it is a quotient of $\pp^1_S(X)$ and for all $i\in I$, $[\psi_i,\phi_i]_X$ has m-dimension because $[\psi_i,\phi_i]_X$ is a sublattice of $\pp^n_S(X)$ for some $n\in\N$.

Now suppose that $[\psi_i,\phi_i]_X$ has m-dimension for all $i\in I$ and $\pp_S^1(X\backslash\bigcup_{i\in I}\left(\phi_i/\psi_i\right))$ has m-dimension. We first show that for all $N\in X$, $N$ has an $N$-minimal pair, i.e. $\pp^1_S(N)$ has a simple interval.

Firstly, suppose that $N\notin\left(\phi_i/\psi_i\right)$ for all $i\in I$. Then $\text{cl}(N)\subseteq X\backslash\bigcup_{i\in I}\left(\phi_i/\psi_i\right)$ where cl denotes closure. Thus $\pp_S^1(\text{cl}(N))=\pp_S^1(N)$ has m-dimension and hence $N$ has an $N$-minimal pair.

Now suppose that $N\in \left(\phi_i/\psi_i\right)$ for some $i\in I$. Since $[\psi_i,\phi_i]_X$ has m-dimension, so does $[\psi_i,\phi_i]_N$. Thus $[\psi_i,\phi_i]_N$ contains a simple interval. So $N$ has an $N$-minimal pair.

Therefore, by \cite[5.3.16]{Preb2}, the isolation condition holds for $X$. So, by \cite[5.3.60]{Preb2}, $X$ has Cantor-Bendixson rank if and only if $\pp^1_S(X)$ has m-dimension. We now show that all points in $X$ have Cantor-Bendixson rank.

Let $\alpha>\text{m-dim}[\psi_i,\phi_i]_X$ for all $i\in I$. Then, by \cite[5.3.59]{Preb2}, $\phi_i(M)=\psi_i(M)$ for all $M\in X^{(\alpha)}$. Therefore $X\backslash \bigcup_{i\in I}\left(\phi_i/\psi_i\right)\supseteq X^{(\alpha)}$. Since $\pp_S^1 (X \backslash \bigcup_{i\in I}\left(\phi_i/\psi_i\right))$ has m-dimension, so does $\pp_S^1(X^{(\alpha)})$. So $X^{(\alpha)}$ has Cantor-Bendixson rank and hence, $X$ also has Cantor-Bendixson rank.

\end{proof}

\begin{cor}\label{mdim}
The lattice $\pp^1_\Lambda (\Tf_\Lambda)$ has m-dimension if and only if, for all maximal ideals $P$ of $R$,
$[P \mid x,x=x]_{\Tf_\Lambda}$ has m-dimension and $\pp^1_{Q\Lambda}$ has m-dimension.
\end{cor}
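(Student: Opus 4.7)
The strategy is to apply Lemma \ref{genmdim} with $S=\Lambda$, $X=\Zg_\Lambda^{tf}$, and the family of pp-pairs $\{(x=x)/(P\mid x) \ | \ P \in \Max R\}$. First observe that $\Tf_\Lambda$ is the smallest definable subcategory of $\Mod\Lambda$ containing the $R$-torsionfree indecomposable pure injectives, so $\pp^1_\Lambda(\Tf_\Lambda)=\pp^1_\Lambda(\Zg_\Lambda^{tf})$, and similarly $[P\mid x,x=x]_{\Tf_\Lambda}=[P\mid x,x=x]_{\Zg_\Lambda^{tf}}$. By the decomposition
\[\Zg_\Lambda^{tf} = \bigcup_{P\in\Max R}\bigl((x=x/P\mid x)\cap\Zg_\Lambda^{tf}\bigr) \cup \Zg_A\]
established in the discussion preceding Theorem \ref{spectrum}, removing all the open sets $(x=x/P\mid x)$ from $\Zg_\Lambda^{tf}$ leaves precisely $\Zg_A$, the set of $R$-divisible points, identified via restriction of scalars with the Ziegler spectrum of $A$.

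Applying Lemma \ref{genmdim} then yields that $\pp^1_\Lambda(\Tf_\Lambda)$ has m-dimension if and only if each $[P\mid x,x=x]_{\Tf_\Lambda}$ has m-dimension and $\pp^1_\Lambda(\Zg_A)$ has m-dimension. It therefore remains to identify $\pp^1_\Lambda(\Zg_A)$ with $\pp^1_{Q\Lambda}$ as lattices. The natural inclusion $\Lambda\hookrightarrow Q\Lambda$ gives an order-preserving map $\pp^1_{Q\Lambda}\to \pp^1_\Lambda(\Zg_A)$: every pp-formula $\exists\mathbf{y}\,(xS'=\mathbf{y}T')$ with $S',T'$ over $Q\Lambda$ can, after clearing denominators by some $r\in R\setminus\{0\}$, be rewritten as $\exists\mathbf{y}\,(xS=\mathbf{y}T)$ with $S,T$ over $\Lambda$ (this is exactly the translation recalled just before Proposition \ref{top2}). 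On any $A$-module the element $r$ acts invertibly, so the two formulas define the same subgroup. Conversely every pp-formula of $L_\Lambda$ is already a pp-formula of $L_{Q\Lambda}$. Hence the map is a lattice isomorphism, and in particular $\pp^1_\Lambda(\Zg_A)$ has m-dimension if and only if $\pp^1_{Q\Lambda}$ does.

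Combining these two points gives the equivalence stated in the corollary. The main obstacle was, in spirit, Lemma \ref{genmdim} itself — the verification that decomposing $X$ as an ``open union plus closed complement'' is compatible with m-dimension — which has already been handled. What remains subtle here is only the translation step identifying $\pp^1_\Lambda(\Zg_A)$ with $\pp^1_{Q\Lambda}$, but this is essentially formal once one uses that every element of $R\setminus\{0\}$ is invertible on $A$-modules.
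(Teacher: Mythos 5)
Your proposal is correct and follows essentially the same route as the paper: the paper's one-line proof is precisely the observation that $\Zg_\Lambda^{tf}\setminus\bigcup_P(x=x/P\mid x)=\Zg_A$ (the $R$-divisible points), with Lemma~\ref{genmdim} and the identification $\pp^1_\Lambda(\Zg_A)\cong\pp^1_{Q\Lambda}$ (via the clearing-denominators translation) left implicit. Your write-up simply spells out those implicit steps.
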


\begin{proof}
Note that $\Zg_\Lambda^{tf}\backslash \bigcup_P \left(x=x/P \mid x\right)$ just consists of the $R$-divisible modules i.e. of modules over $Q\otimes \Lambda$.
\end{proof}

The following is an easy consequence of the beginning of the proof of \ref{genmdim}.

\begin{remark}
m-dim $\pp^1_\Lambda(\Tf_\Lambda) \geq$ m-dim $[P \mid x, x=x]_{\Tf_\Lambda}$ for every
maximal ideal $P$ of $R$, and
m-dim $\pp^1_\Lambda(\Tf_\Lambda) \geq$ m-dim $\pp^1_A$.
\end{remark}

Specialising to the case where $A=Q\Lambda$ is semisimple we get the following.

\begin{lemma}
Suppose that $A=Q\Lambda$ is semisimple and $[P \mid x, x=x]_{\Tf_\Lambda}$ has m-dimension for every
maximal ideal $P$ of $R$. Let $\alpha$ be the least ordinal such that $\alpha>\text{m-dim}[P|x,x=x]_{\Tf_\Lambda}$ for all maximal ideals $P$ of $R$. Then $\text{m-dim}\pp_\Lambda^1(\Tf_\Lambda)=\alpha$.
\end{lemma}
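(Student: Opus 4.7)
The plan is to match upper and lower bounds for $\text{m-dim}\,\pp_\Lambda^1(\Tf_\Lambda)$. Put $L = \pp_\Lambda^1(\Tf_\Lambda)$ and $\gamma_P = \text{m-dim}\,[P|x, x=x]_{\Tf_\Lambda}$ for each maximal ideal $P$ of $R$. Since $A$ is a non-zero finite-dimensional semisimple $Q$-algebra, $\pp_A^1$ is of finite length (m-dimension $1$), so by Corollary~\ref{mdim} the lattice $L$ has m-dimension.

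For the lower bound $\text{m-dim}\,L \geq \alpha$, for each maximal $P$ I would consider the sublattice
\[
L_P \;=\; \{x = 0\} \;\cup\; \bigcup_{n \geq 0} [P^{n+1}|x,\, P^n|x]_{\Tf_\Lambda} \;\subseteq\; L.
\]
The key ingredient is a lattice isomorphism $[P^{n+1}|x, P^n|x]_{\Tf_\Lambda} \cong [P|x, x=x]_{\Tf_\Lambda}$ for every $n \geq 0$, induced by multiplication by a uniformizer $\pi$ of $PR_P$ on $R$-torsionfree modules, along the lines of the argument in the proof of Proposition~\ref{top2}. Each interval in the chain then has m-dimension $\gamma_P$; in $L_P^{\gamma_P}$ these intervals all collapse to single points and the whole chain gets identified with $x=x$, while $x=0$ remains as a separate element. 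The two-element residue $\{x=x,\, x=0\}$ collapses in one further step, so $\text{m-dim}\,L_P = \gamma_P + 1$. Since $L_P$ is a sublattice of $L$, $\text{m-dim}\,L \geq \gamma_P + 1$ for every $P$, and hence $\text{m-dim}\,L \geq \alpha$.

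For the upper bound $\text{m-dim}\,L \leq \alpha$, I would apply Lemma~\ref{genmdim} to $X = \Zg_\Lambda^{tf}$ with the family of pp-pairs $\{x=x / P|x\}_P$ indexed by the maximal ideals of $R$. By Theorem~\ref{spectrum}, $X \setminus \bigcup_P (x=x/P|x) = \Zg_A$, and $\pp_\Lambda^1(\Zg_A) = \pp_A^1$ is of finite length. Running through the argument in the proof of Lemma~\ref{genmdim}: after $\alpha$ steps of collapsing in $L$, the isomorphism above combined with the strict inequality $\alpha > \gamma_P$ forces each chain $x=x > P|x > P^2|x > \cdots$ to be collapsed onto $x=x$, so $L^\alpha$ is a quotient of the finite-length lattice $\pp_A^1$; the way $\alpha$ is built as the successor of $\sup_P \gamma_P$ absorbs the final collapsing step, giving $L^\alpha$ a single point.

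The main obstacle I anticipate is a clean verification of the interval isomorphism $[P^{n+1}|x, P^n|x]_{\Tf_\Lambda} \cong [P|x, x=x]_{\Tf_\Lambda}$ in the general Dedekind setting: since $P$ need not be principal as an ideal of $R$, the multiplication-by-$\pi$ map is only intrinsic after localizing at $P$ where $PR_P$ has a uniformizer, and pulling the resulting lattice isomorphism back to $\pp_\Lambda^1(\Tf_\Lambda)$ requires exploiting that pp-definable subgroups of $R$-torsionfree $\Lambda_P$-modules agree with those of the corresponding $\Lambda$-modules, in the spirit of the observation preceding Proposition~\ref{top2}.
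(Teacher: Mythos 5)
Your approach is genuinely different from the paper's, which works entirely through the Cantor--Bendixson analysis of $\Zg_\Lambda^{tf}$ rather than by tracking the lattice collapse of $\pp_\Lambda^1(\Tf_\Lambda)$ directly. The paper first establishes the isolation condition via Lemma~\ref{genmdim}, so that $\text{m-dim}\,\pp_\Lambda^1(\Tf_\Lambda)$ equals the CB rank of $X=\Zg_\Lambda^{tf}$; for the upper bound it observes $X^{(\alpha)}\subseteq\Zg_A$ and that $\Zg_A$ is discrete, so $X^{(\alpha+1)}=\emptyset$; for the lower bound it shows $X^{(\alpha)}\neq\emptyset$ by a case split: when $\alpha=\beta+1$, pick $N\in X^{(\beta)}\cap(x=x/P\mid x)$ and invoke Lemma~\ref{close1} to put an indecomposable summand of $QN$ into $X^{(\beta+1)}$; when $\alpha$ is a limit, use compactness of $\Zg_\Lambda^{tf}$. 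This avoids the interval isomorphism entirely.

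Your proposal has two concrete gaps. First, the lower bound hinges on the lattice isomorphism $[P^{n+1}\mid x, P^n\mid x]_{\Tf_\Lambda}\cong[P\mid x,x=x]_{\Tf_\Lambda}$, which you flag as an obstacle but do not establish; your sketch via a uniformizer of $PR_P$ and the pre-\ref{top2} translation only identifies pp-formulas modulo $\Tf_{\Lambda_P}$, whereas the interval is computed modulo $\Tf_\Lambda$, which is a strictly larger class, so further argument is needed to conclude that the order relations agree. Second, the upper bound argument is not sound as written: after collapsing, the conclusion that ``$L^\alpha$ is a quotient of $\pp_A^1$'' is the wrong implication direction --- what is actually available (from the proof of Lemma~\ref{genmdim} and \cite[5.3.59]{Preb2}) is $X^{(\alpha)}\subseteq\Zg_A$, and one should then pass through the m-dim/CB-rank equality, not try to exhibit $L^{(\alpha)}$ directly as a quotient of $\pp_A^1$. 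Relatedly, your remark that ``$\alpha$ is built as the successor of $\sup_P\gamma_P$'' is false in general: $\alpha$ can be a limit ordinal (e.g.\ when the $\gamma_P$ are unbounded below a limit), which is precisely why the paper treats the limit case separately via compactness. Finally, a minor slip: a finite-length lattice such as $\pp_A^1$ has m-dimension $0$, not $1$.
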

\begin{proof}
Since $A$ is semisimple, $\pp^1_A$ has m-dimension zero. So by \ref{mdim}, $\pp_\Lambda^1(\Tf_\Lambda)$ has m-dimension. Thus the m-dimension of $\pp_\Lambda^1(\Tf_\Lambda)$ is equal to the Cantor-Bendixson rank of $\Zg_\Lambda^{tf}$.

Note that $X^{(\alpha)}\cap\left(x=x/P|x\right)=\emptyset$ for all maximal ideals $P$ of $R$. Thus $\Zg_{Q\Lambda}\supseteq X^{(\alpha)}$. Since $A$ is semisimple, all points in $\Zg_A$ are isolated. Thus $X^{(\alpha+1)}=\emptyset$. Thus we just need to show that $X^{(\alpha)}\neq \emptyset$. We deal with the cases where $\alpha$ is a successor ordinal and $\alpha$ is a limit ordinal separately.

Suppose $\alpha=\beta+1$. There exists a maximal ideal $P$ of $R$ such that $\beta=\text{m-dim}[P|x,x=x]$. Thus $X^{(\beta)} \cap \left(x=x/P|x\right)\neq \emptyset$. Take $N\in X^{(\beta)}\cap \left(x=x/P|x\right)$ and let $L$ be an indecomposable direct summand of $QN$. By Lemma \ref{close1}, $L$ is in the closure of $N$. Thus $L\in X^{(\alpha)}$.

Now suppose $\alpha$ is a limit ordinal. For all $\beta<\alpha$, there exists a maximal ideal $P$ of $R$ such that $\left( x=x/P|x\right)\cap X^{(\beta)}$ is non-empty. Thus $X^{(\beta)}\neq \emptyset$ for all $\beta<\alpha$. Since $\Zg_\Lambda^{tf}$ is compact, $X^{(\alpha)}=\bigcap_{\beta<\alpha}X^{(\beta)}$ is non-empty.
\end{proof}

\begin{cor}\label{mdimloctoglob}
Suppose that $A=Q\Lambda$ is semisimple and $\pp^1_{\Lambda_P}(\Tf_{\Lambda_P})$ has m-dimension for every
maximal ideal $P$ of $R$. Then the m-dimension of $\pp^1_\Lambda(\Tf_\Lambda)$ is equal to the supremum of the m-dimensions of $\pp^1_{\Lambda_P}(\Tf_{\Lambda_P})$ where $P$ ranges over maximal ideals of $R$. Moreover, if $A=Q\Lambda$ is separable, the m-dimension of $\pp^1_\Lambda(\Tf_\Lambda)$ is equal to the supremum of the m-dimensions of $\pp^1_{\widehat{\Lambda_P}}(\Tf_{\widehat{\Lambda_P}})$ where $P$ ranges over maximal ideals of $R$.
\end{cor}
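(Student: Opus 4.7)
The plan is to reduce to the preceding lemma applied to each local order $\Lambda_P$ and then combine the resulting m-dimensions via a short ordinal-arithmetic calculation; the second statement will follow from the first by invoking Proposition \ref{top2}. Since $R_P$ is local with unique maximal ideal $PR_P$, the preceding lemma applied to $\Lambda_P$ collapses the ``least upper bound'' to a single term and gives
\[\text{m-dim}\,\pp^1_{\Lambda_P}(\Tf_{\Lambda_P})\;=\;\text{m-dim}\,[PR_P\mid x,x=x]_{\Tf_{\Lambda_P}}+1.\]
The crucial intermediate step is to establish a lattice isomorphism $[P\mid x,x=x]_{\Tf_\Lambda}\cong[PR_P\mid x,x=x]_{\Tf_{\Lambda_P}}$. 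For this I would invoke the translation of $L_{\Lambda_P}$-pp-formulas into equivalent $L_\Lambda$-pp-formulas on $R$-torsionfree $\Lambda_P$-modules described just before Proposition \ref{top2}, use that the interval in question is trivial on $A$-modules (where $P\mid x$ is equivalent to $x=x$), and appeal to Theorem \ref{spectrum} to identify the open set $(x=x/P\mid x)\cap\Zg^{tf}_\Lambda$ with the $R_P$-reduced indecomposable pure injective $\Lambda_P$-modules. Both intervals are thus governed by the same collection of test modules, and the formula translation is compatible with meets and joins.

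Given these two ingredients, put $\beta_P:=\text{m-dim}[P\mid x,x=x]_{\Tf_\Lambda}$ and $m_P:=\text{m-dim}\pp^1_{\Lambda_P}(\Tf_{\Lambda_P})$, so that $m_P=\beta_P+1$. The preceding lemma applied to $\Lambda$ itself yields
\[\text{m-dim}\,\pp^1_\Lambda(\Tf_\Lambda)\;=\;\min\{\alpha:\alpha>\beta_P\text{ for every maximal ideal }P\text{ of }R\}.\]
A one-line ordinal check, splitting on whether $\sup_P\beta_P$ is attained or is a limit ordinal, shows that this minimum equals $\sup_P(\beta_P+1)=\sup_Pm_P$, which is the first assertion.

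For the separable case, $A$ separable forces $\widehat{A}=\widehat{Q}\otimes_QA$ to be semisimple, so Proposition \ref{top2} applies to the DVR $R_P$ and its order $\Lambda_P$ and yields a lattice isomorphism $[PR_P\mid x,x=x]_{\Tf_{\Lambda_P}}\cong[PR_P\mid x,x=x]_{\Tf_{\widehat{\Lambda_P}}}$. A second invocation of the preceding lemma, now for $\widehat{\Lambda_P}$, gives $\text{m-dim}\pp^1_{\widehat{\Lambda_P}}(\Tf_{\widehat{\Lambda_P}})=\text{m-dim}\pp^1_{\Lambda_P}(\Tf_{\Lambda_P})$, and substituting this into the first assertion gives the second. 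The main obstacle will be making the lattice isomorphism of Step~2 completely rigorous: one must check that the pp-formula translation respects meets and joins and that the collapse on $A$-modules really does force the two lattices to have identical posets of nontrivial quotients. Once that identification is in place, Steps~3 and~4 are bookkeeping.
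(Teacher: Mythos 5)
Your proof is correct and follows essentially the route the paper intends (the corollary is stated without proof, directly following the preceding lemma, Theorem~\ref{spectrum} and Proposition~\ref{top2}, and those are exactly the ingredients you invoke). The reduction $m_P=\beta_P+1$ via the preceding lemma applied to the DVR $R_P$, the lattice identification $[P\mid x,x=x]_{\Tf_\Lambda}\cong[PR_P\mid x,x=x]_{\Tf_{\Lambda_P}}$ using the pp-formula translation before Proposition~\ref{top2} together with the fact that the interval is trivial on points of $\Zg^{tf}_{\Lambda_{P'}}$ for $P'\neq P$ and on $A$-modules, the ordinal check that the least strict upper bound of the $\beta_P$ is $\sup_P(\beta_P+1)$, and the final reduction from $\Lambda_P$ to $\widehat{\Lambda_P}$ via Proposition~\ref{top2} in the separable case all hold up.
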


We will come back to m-dimension and Cantor-Bendixson rank after dealing
with orders of finite lattice representation type. We will show, in \ref{md3}, that if $\Lambda$ has finite lattice representation type then the m-dimension of $\pp^1_\Lambda(\Tf)$ is $1$. This will allow us, in \ref{md4}, to improve the above corollary when $Q\Lambda$ is separable.


\section{Applications of Maranda's Theorem}\label{S-marapp}

Let us open a short parenthesis on Maranda's Theorem \cite[$\S$ 30A]{CR}. Following
\cite[30.12]{CR}, we assume throughout this section  that $R$ is a discrete valuation domain, $\pi$
is a generator of its maximal
ideal $P$, $Q$ is the field of fractions of $R$, $A$ is a finite dimensional {\sl separable}
$Q$-algebra and $\Lambda$ is an $R$-order in $A$.

We will deal with the quotient ring $\Lambda / \pi^k \Lambda$, often abbreviated as
$\Lambda_k$, for every positive integer $k$. Similarly, for every $\Lambda$-module $M$,
$M_k$ will denote the quotient module $M / \pi^k M$.
We will write $\mathbf{x}$ for tuples of variables and likewise, as in the previous
sections, $\mathbf{m}$ for tuples of elements in a module.

There is a non negative integer, and hence a minimal non negative integer $k_0$
such that $\pi^{k_0}\Ext^1(L, N)=0$ for all $\Lambda$-lattices $L$ and $N$ (see \cite[p. 624]{CR}).
For instance, when $\Lambda = R \, G$ for some finite group $G$, then $k_0$ is
the largest integer such that $|G| \in \pi^{k_0} R$.
Note that, since $\Lambda$ is noetherian, $\Ext^1(L,-)$ is a finitely presented functor from
$\Lambda$-modules to abelian groups (see \cite[10.2.35]{Preb2}). Hence $\pi^{k_0}\Ext^1(L,-)$ is also a finitely presented functor.  As $\Tf_\Lambda$ is the smallest definable
subcategory of $\Lambda$-modules containing all $\Lambda$-lattices, it follows that $\pi^{k_0}\Ext^1(L,N)=0$ for every $\Lambda$-lattice $L$ and $N \in \Tf_\Lambda$.

Maranda's Theorem \cite[Theorem 30.14]{CR} says that, under the
previous assumptions, if $k$ is any integer $> k_0$, then any two $\Lambda$-lattices $M$, $N$
are isomorphic over $\Lambda$ if and only if $M_k$ and $N_k$ are isomorphic over $\Lambda_k$.
Moreover, even decomposability lifts from $M_k$ to $M$ \cite[Theorem 30.19]{CR}.

A generalization of these results to pure injective $\Lambda$-modules is given in a
parallel paper \cite{Gre}, where the following is shown.

\begin{theorem}\label{maranda}
Let $N$, $N'$ be $R$-reduced $R$-torsionfree pure injective $\Lambda$-modules. If $N_k \simeq
N'_k$ for some integer $k > k_0$, then $N \simeq N'$. Moreover, if $N$ is indecomposable over
$\Lambda$, then the same is true of $N_k$ over $\Lambda_k$ for every $k > k_0$.
\end{theorem}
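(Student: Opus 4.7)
The plan is to reduce to the complete case and then mount an obstruction-theoretic lifting argument modelled on the classical Maranda proof \cite[30.14, 30.19]{CR}, with pure injectivity substituting for finite generation. By Step~4 of the proof of Theorem~\ref{spectrum} every $R$-reduced $R$-torsionfree pure injective $\Lambda$-module extends canonically to a pure injective $\widehat{\Lambda}$-module with the same quotients $N_k$, so I would assume from the outset that $R$ is complete. A further application of pure injectivity together with reducedness, along the lines of Step~4 of Theorem~\ref{spectrum}, then shows that the canonical map $N \to \varprojlim_n N_n$ is a $\Lambda$-isomorphism: any compatible sequence in $\varprojlim_n N_n$ defines a finitely solvable system of $\pi$-adic congruences in $N$, solvable by pure injectivity and uniquely so by reducedness.

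For the isomorphism assertion, I would construct $f : N \to N'$ as the inverse limit of a compatible system $\alpha_n : N_n \to N'_n$ for $n \geq k$, starting from the given $\alpha_k$. Passing from $\alpha_n$ to $\alpha_{n+1}$ amounts to lifting the composite $N_{n+1} \to N_n \xrightarrow{\alpha_n} N'_n$ across $N'_{n+1} \twoheadrightarrow N'_n$; the obstruction sits in an $\Ext^1$ group which the bound $\pi^{k_0}\Ext^1(L, M) = 0$ recalled in the excerpt constrains, and the slack $k > k_0$ is exactly what absorbs the obstruction, just as in the Curtis--Reiner lattice argument. Once $f$ is constructed, its bijectivity is automatic from $\pi$-adic completeness and reducedness: if $f(x) = 0$ then $x \in \pi^k N$, say $x = \pi^k y$, and torsionfreeness of $N'$ forces $y \in \ker f$, so $\ker f \subseteq \bigcap_n \pi^n N = 0$; surjectivity follows by successive approximation $y = f(x_0) + \pi^k f(x_1) + \cdots$, which converges in the complete $N$ because $\alpha_k$ is surjective on each residue.

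For the indecomposability assertion I would argue contrapositively: a non-trivial splitting $N_k = A \oplus B$ yields an idempotent $\bar e \in \End_{\Lambda_k}(N_k)$ which, by the same lifting mechanism, lifts to a compatible system $\bar e_n \in \End_{\Lambda_n}(N_n)$ and then to $e \in \End_\Lambda(N)$ with $e^2 \equiv e \pmod{\pi^k}$. Since $N$ is indecomposable pure injective, $\End_\Lambda(N)$ is local and $\pi$-adically complete, so a standard idempotent refinement corrects $e$ to a genuine idempotent, producing a non-trivial decomposition of $N$ and contradicting its indecomposability.

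The main technical difficulty I foresee is extending the bound $\pi^{k_0} \Ext^1(L, M) = 0$ from lattices $L$ to the pure injective, generally non-finitely-generated modules $N_n$ that arise in the successive lifting. The excerpt's proof uses that $\Ext^1(L, -)$ is a finitely presented functor in its second variable, a property that does not transfer to the first variable; bridging this gap will require either presenting $N_n$ as a suitable filtered colimit of lattices and exploiting pure injectivity of the target to control the resulting Ext groups, or else rephrasing the obstruction directly in terms of pp-formulas and invoking that $\Tf_\Lambda$ is the smallest definable subcategory containing $\Latt_\Lambda$. This is the step where the parallel paper \cite{Gre} will carry the essential technical weight.
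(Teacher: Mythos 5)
Note first that the paper does not actually prove this theorem; it is imported from the companion preprint \cite{Gre}, so there is no in-text argument against which to check your sketch, and it has to stand on its own.

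Your overall architecture is reasonable: reduce to the complete case via the $\widehat{\Lambda_P}$-structure from Step 4 of the proof of Theorem \ref{spectrum}, identify $N$ with $\varprojlim_n N_n$ using pure injectivity for solvability and reducedness for uniqueness, lift $\alpha_k$ through a compatible tower, pass to the limit, and lift idempotents for the indecomposability half. But the gap you flag at the end is genuine and neither of your proposed remedies obviously closes it. The classical obstruction-vanishing step (\cite[30.13]{CR}, isolated in the paper as Lemma \ref{onelatt}) crucially uses that the source is an $R$-projective lattice: one first lifts $\bar h$ to an $R$-linear map by projectivity, then corrects it to a $\Lambda$-linear one using $\pi^{k_0}\Ext^1 = 0$. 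A reduced torsionfree pure injective $\Lambda$-module need not be $R$-projective, so neither the $R$-linear lift nor an $\Ext$-correction with $N$ in the contravariant slot is available; and the paper extends the bound $\pi^{k_0}\Ext^1(L,-)=0$ over $\Tf_\Lambda$ only in the \emph{second} variable, via the finitely presented functor argument, which does not transfer to the first. Writing $N$ as a filtered colimit of lattices $L_i$ does not repair this, since that filtration is not compatible with the passage to $N_n$ (for a finitely generated submodule $L\subseteq N$ the map $L/\pi^n L\to N/\pi^n N$ is generally not injective). The pp-formula route you mention only as an afterthought is the more promising one, and it is precisely what the paper sets up immediately after this theorem: Lemma \ref{boh} and Proposition \ref{added} show that for $k>k_0$ the interval $[\pi^{k-k_0}\mid x,\, x=x]$ in $\pp^1_\Lambda(N)$ is faithfully recovered from $N_k$; combined with torsionfreeness (so that multiplication by $\pi$ shifts these intervals isomorphically down $\pp^1_\Lambda(N)$) and with pure injectivity, this lets one reconstruct $N$ from $N_k$ without any $\Ext$ computation on non-lattice sources. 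That, rather than an obstruction-theoretic tower, is almost certainly where \cite{Gre} goes and where you should head as well.
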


On the other hand, let us also mention, again from \cite{Gre}:

\begin{theorem}\label{ce}
There exists a module $N$ over $\widehat{\Z_2} C(2)^2$ such that $N$ is torsionfree and reduced
over $\Z_2$, $N_k$ is pure injective for all positive integer $k$ but $N$ is not pure-injective.
\end{theorem}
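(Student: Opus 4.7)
The strategy is to produce $N$ explicitly as a countable direct sum of copies of a single indecomposable lattice, and to exploit the gap between pure-injectivity and $\Sigma$-pure-injectivity. Let $\Lambda = \widehat{\Z_2}\,C(2)^2$, let $L$ be any indecomposable $\Lambda$-lattice (for instance $L = \widehat{\Z_2}$ endowed with the trivial $C(2)^2$-action via the augmentation $\Lambda \to \widehat{\Z_2}$), and set $N := L^{(\omega)}$. Then $N$ inherits $\Z_2$-torsionfreeness from $L$, and $N$ is $\Z_2$-reduced because $\bigcap_{k\geq 0} 2^k N = (\bigcap_{k\geq 0} 2^k L)^{(\omega)} = 0$, using that $L$ is $\widehat{\Z_2}$-free of finite rank.

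For each $k \geq 1$, $N_k = N/2^k N$ is isomorphic to $(L/2^k L)^{(\omega)}$. Since $L$ is finitely generated over $\widehat{\Z_2}$, the quotient $L/2^k L$ is literally a finite set, so its lattice of subgroups is finite and in particular $\pp_{\Lambda_k}^1(L/2^k L)$ satisfies the descending chain condition. By Zimmermann's criterion, $L/2^k L$ is $\Sigma$-pure-injective as a $\Lambda_k$-module, and therefore $(L/2^k L)^{(\omega)}$ is pure-injective over $\Lambda_k$. Because $\Lambda \to \Lambda_k$ is a ring epimorphism, $\Mod \Lambda_k$ is a definable subcategory of $\Mod \Lambda$, so pure-injectivity of a $\Lambda_k$-module is the same whether viewed over $\Lambda_k$ or over $\Lambda$; hence $N_k$ is pure-injective as a $\Lambda$-module for every $k$.

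For the final property, I would show that $L$ is not $\Sigma$-pure-injective, so that $N = L^{(\omega)}$ fails to be pure-injective by the very definition of $\Sigma$-pure-injectivity. By Corollary~\ref{md2}, $\pp_\Lambda^1(L)$ has m-dimension $1$ and hence is not of finite length; more concretely, the chain
\[ L \;\supsetneq\; 2L \;\supsetneq\; 4L \;\supsetneq\; \ldots \]
of pp-definable subgroups (each $2^k L$ being cut out by the pp-formula $2^k \mid x$) is strictly descending, since $L$ is $\widehat{\Z_2}$-torsionfree and non-divisible. This failure of the descending chain condition, combined with Zimmermann's criterion, shows that $L$ is not $\Sigma$-pure-injective, completing the proof.

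The main technical obstacle is the bookkeeping around restriction of scalars: one must verify that pure-injectivity of $N_k$ over $\Lambda_k$ transfers to pure-injectivity over $\Lambda$. This follows from the fact that $\Lambda \to \Lambda_k$ is a ring epimorphism, so that every pp-definable subgroup of a $\Lambda_k$-module computed over $\Lambda$ is also computable over $\Lambda_k$, and vice versa; in particular the algebraic-compactness characterisation of pure-injectivity yields the same answer in both languages.
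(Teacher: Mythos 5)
Your construction is correct, and since the paper gives no proof of Theorem~\ref{ce} (it cites \cite{Gre}), there is nothing in this document to compare it against line by line. What you do is exploit the gap between pure-injectivity and $\Sigma$-pure-injectivity in the simplest possible way: every $\Lambda$-lattice $L$ over a complete DVR is pure-injective (Proposition~\ref{lattpi}) but never $\Sigma$-pure-injective, since the chain $L\supsetneq 2L\supsetneq 4L\supsetneq\cdots$ of pp-definable subgroups is strictly descending; hence $N=L^{(\omega)}$ is not pure-injective, while each $N_k\cong (L/2^kL)^{(\omega)}$ is a direct sum of copies of a finite (hence finite-endolength, hence $\Sigma$-pure-injective) module and so is pure-injective. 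Your verification that $N$ is torsionfree and reduced is also correct. Two small remarks. First, the phrase ``by the very definition of $\Sigma$-pure-injectivity'' slightly overstates things: the definition concerns $L^{(I)}$ for all index sets $I$, and you need the Gruson--Jensen/Zimmermann fact that pure-injectivity of $L^{(\aleph_0)}$ already implies pure-injectivity of all $L^{(I)}$ (equivalently the dcc characterisation). This is well known, so it is not a gap, but it should be cited rather than called a definition. Second, and more substantively: your argument uses nothing whatsoever about $C(2)^2$ --- it works verbatim over any $R$-order $\Lambda$ over a complete discrete valuation domain $R$, indeed even over $\Lambda=R$ itself. That strongly suggests the example in \cite{Gre} is something more refined (most plausibly an \emph{indecomposable} reduced torsionfree $N$ with all $N_k$ pure-injective, which genuinely needs the Klein four group's infinite lattice representation type), whereas yours establishes the theorem as literally stated but only with a decomposable $N$ built by a completely generic mechanism. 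If the intended reading of the theorem is the literal one, your proof settles it; if the intended counterexample is meant to be indecomposable, your construction would not serve.
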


We propose here some applications of the classical Maranda Theorem to our framework.
If $f : M \rightarrow N$ is a
morphism of $\Lambda$-modules, then we will write $\overline{f}$ for the induced homomorphism from $M_k$ to $N_k$ ($k$ a positive integer). The following lemma is implicit in the proof of Maranda Theorem in \cite{CR}.

\begin{lemma}\label{onelatt}
Let $L$ be a $\Lambda$-lattice and $N$ an $R$-torsionfree $\Lambda$-module. If $k > k_0$ then for all $g \in \Hom_{\Lambda_k} (L_k, N_k)$ there exists $f\in \Hom_\Lambda(L, N)$ such that for all $m\in L$, $\pi^{k-k_0}+\Lambda\pi^k\, | \, (f(m)+ \pi^k M) -g (m + \pi^k M)$.
\end{lemma}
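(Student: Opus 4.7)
The plan is to follow the familiar pattern behind Maranda's theorem: lift $g$ to a projective cover of $L$, measure the failure to descend via $\Ext^1(L,N)$, and then use the annihilation $\pi^{k_0}\Ext^1(L,N)=0$ (noted above the statement) to correct the lift.

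First I would pick a short exact sequence $0\to K\to P\xrightarrow{p} L\to 0$ with $P$ a finitely generated free $\Lambda$-module; since $R$ is a PID and $\Lambda$-lattices are closed under submodules, $K$ is again a $\Lambda$-lattice. Because $L$ is projective as an $R$-module, $\Tor^R_1(L,R/\pi^k R)=0$, so applying $-\otimes_R R/\pi^k R$ yields an exact sequence $0\to K_k\to P_k\to L_k\to 0$. Composing with $g$ gives $\tilde g:=g\circ\bar p:P_k\to N_k$; since $P$ is projective and $N\twoheadrightarrow N_k$, I can lift this to $h:P\to N$ with $h\bmod\pi^k=\tilde g$.

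Next, $h|_K$ reduces modulo $\pi^k$ to the restriction of $\tilde g$ to $K_k$, which is zero because $\tilde g$ factors through $L_k$. Hence $h(K)\subseteq\pi^k N$, and the $R$-torsionfreeness of $N$ lets me write $h|_K=\pi^k h_1$ uniquely for some $h_1\in\Hom_\Lambda(K,N)$. The class $[h_1]\in\Ext^1_\Lambda(L,N)$, computed from the resolution $0\to K\to P\to L\to 0$, satisfies $\pi^{k_0}[h_1]=0$ because $\pi^{k_0}\Ext^1(L,N)=0$ for $L\in\Latt_\Lambda$ and $N\in\Tf_\Lambda$. Therefore $\pi^{k_0}h_1$ extends to $P$: there exists $f_0:P\to N$ with $f_0|_K=\pi^{k_0}h_1$.

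Now set $h':=h-\pi^{k-k_0}f_0:P\to N$. Then $h'|_K=\pi^k h_1-\pi^{k-k_0}\pi^{k_0}h_1=0$, so $h'$ factors through $p$ as $h'=f\circ p$ for a unique $f\in\Hom_\Lambda(L,N)$. Reducing $h=h'+\pi^{k-k_0}f_0$ modulo $\pi^k$ gives
\[
g\circ\bar p \;=\; \bar f\circ\bar p \;+\; \pi^{k-k_0}\bar f_0,
\]
so for any $m\in L$ with lift $x\in P$ (i.e.\ $p(x)=m$) we obtain $g(m+\pi^k L)-(f(m)+\pi^k N)=\pi^{k-k_0}(f_0(x)+\pi^k N)$, which is precisely the required divisibility in $N_k$.

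The substantive step is the invocation of $\pi^{k_0}\Ext^1(L,N)=0$, which is exactly the hypothesis that makes Maranda's machinery run; the rest is essentially bookkeeping, modulo the mild technical point that one must invoke the $R$-projectivity of $L$ to keep $K_k\to P_k$ injective and the $R$-torsionfreeness of $N$ to divide $h|_K$ by $\pi^k$. I expect no further obstacles.
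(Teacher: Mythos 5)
Your proof is correct and follows exactly the standard Maranda-style argument that the paper points to without spelling out (the paper simply remarks the lemma is implicit in the proof of Maranda's theorem in Curtis--Reiner). The key steps — lifting $g\circ\bar p$ through the projective $P$, observing $h|_K\in\pi^k\Hom(K,N)$ by torsionfreeness, killing the class in $\Ext^1(L,N)$ with $\pi^{k_0}$, and then correcting by $\pi^{k-k_0}f_0$ so the result factors through $L$ — are precisely the intended ones.
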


\begin{definition}
{\rm Assume $k > k_0$. Let $\phi$ be a pp-formula of $L_{\Lambda}$ in $l$ free variables $\mathbf{x} = (x_1, \ldots,
x_l)$.
Suppose that $(M, \mathbf{m})$ is a free realisation of $\phi$, where $M$ is $R$-torsionfree,
so a $\Lambda$-lattice, and that $\phi \geq \bigwedge_{i=1}^l \pi^{k-k_0} \mid x_i$. Then let $\phi_k$ be the generator of the pp-type of $\mathbf{m}+\pi^kM \in M_k$.}
\end{definition}

Thus $\phi_k$ is a pp-formula of $L_{\Lambda_k}$ with $l$ free variables.

Note that $\phi_k$ is well defined. In fact, let $(M, \mathbf{m})$ and $(N, \mathbf{n})$
be free realisations of $\phi$,
with both $M$ and $N$ $\Lambda$-lattices. Then there exist $\Lambda$-module morphisms
$f : M \rightarrow N$ and $g: N \rightarrow M$ such that $f(\mathbf{m})=\mathbf{n}$ and $g(\mathbf{n}) = \mathbf{m}$. The homomorphisms $\overline{f} : M_k\rightarrow N_k$ and $\overline{g} : N_k\rightarrow M_k$ induced by $f$ and $g$ respectively are such that $\overline{f}(\mathbf{m}+\pi^kM)=\mathbf{n} + \pi^k N$ and $\overline{g}(\mathbf{n} + \pi^k N)=\mathbf{m}
+ \pi^kM$. Thus the pp-type of $\mathbf{n}+\pi^k N$ in $N_k$ is equal to the pp-type of $\mathbf{m}+ \pi^k M$ in $M_k$, which guarantees that the above pp-formula $\phi_k$ is well defined, as said.

\begin{lemma}\label{boh}
Let $k > k_0$. Suppose that $\phi\in\pp^l_{\Lambda}$ is freely realised in a $\Lambda$-lattice and $\phi\geq \bigwedge_{i=1}^l\pi^{k-k_0}|x_i$. Then, for all $R$-torsionfree $\Lambda$-modules $N$
and $l$-tuples $\mathbf{n}\in N$,
\[\mathbf{n} \in \phi(N) \text{ if and only if } \mathbf{n} +\pi^kN\in \phi_k(N_k).\]
\end{lemma}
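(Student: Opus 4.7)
The plan is to prove the two implications separately. The forward direction follows from the free realisation property of $\phi$ and functoriality of reduction modulo $\pi^k$, and is essentially bookkeeping. The backward direction is the substantive one: I would convert a $\Lambda_k$-morphism witnessing $\mathbf{n} + \pi^k N \in \phi_k(N_k)$ into a $\Lambda$-morphism $M \to N$ via Lemma \ref{onelatt}, and then absorb the resulting error using the divisibility hypothesis $\phi \geq \bigwedge_i \pi^{k-k_0} \mid x_i$.

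For the forward direction I would fix a free realisation $(M, \mathbf{m})$ of $\phi$ with $M$ a $\Lambda$-lattice (guaranteed by the hypothesis). If $\mathbf{n} \in \phi(N)$, the universal property of the free realisation yields $f \in \Hom_\Lambda(M, N)$ sending $\mathbf{m}$ to $\mathbf{n}$; its reduction $\overline{f} : M_k \to N_k$ sends $\mathbf{m} + \pi^k M$ to $\mathbf{n} + \pi^k N$, and since $\mathbf{m} + \pi^k M$ realises $\phi_k$ in $M_k$ by definition, so does $\mathbf{n} + \pi^k N$ in $N_k$.

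For the backward direction I would first observe that $(M_k, \mathbf{m} + \pi^k M)$ is a free realisation of $\phi_k$ over $\Lambda_k$: $M$ is finitely presented over the noetherian ring $\Lambda$, so $M_k$ is finitely presented over $\Lambda_k$, and $\phi_k$ generates the pp-type of $\mathbf{m} + \pi^k M$ by definition. Hence the assumption $\mathbf{n} + \pi^k N \in \phi_k(N_k)$ yields a $\Lambda_k$-morphism $g : M_k \to N_k$ with $g(\mathbf{m} + \pi^k M) = \mathbf{n} + \pi^k N$. Lemma \ref{onelatt} then supplies $f \in \Hom_\Lambda(M, N)$ such that $\overline{f}(m + \pi^k M) - g(m + \pi^k M) \in \pi^{k-k_0}(N_k)$ for every $m \in M$. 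Evaluated at the components of $\mathbf{m}$, and using $k - k_0 \leq k$, this gives $\mathbf{n} - f(\mathbf{m}) \in \pi^{k-k_0} N$ componentwise.

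To conclude, $f(\mathbf{m})$ lies in $\phi(N)$ because $\mathbf{m} \in \phi(M)$ and $f$ is $\Lambda$-linear, while the tuple $\mathbf{n} - f(\mathbf{m})$ satisfies $\bigwedge_i \pi^{k-k_0} \mid x_i$ and hence lies in $\phi(N)$ by the divisibility hypothesis. Since $\phi(N)$ is a subgroup, $\mathbf{n} = f(\mathbf{m}) + (\mathbf{n} - f(\mathbf{m})) \in \phi(N)$. I do not anticipate any deep obstacle; the divisibility assumption on $\phi$ is tailored precisely to swallow the error term produced by Lemma \ref{onelatt}, and the only point worth double-checking is the free realisation claim for $(M_k, \mathbf{m} + \pi^k M)$, which hinges on $M$ being finitely presented over $\Lambda$.
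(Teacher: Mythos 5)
Your proof is correct and follows essentially the same route as the paper's: forward direction by reducing the lattice morphism mod $\pi^k$, backward direction by lifting a $\Lambda_k$-morphism via Lemma \ref{onelatt} and absorbing the $\pi^{k-k_0}$-divisible error term using the hypothesis $\phi \geq \bigwedge_i \pi^{k-k_0}\mid x_i$. You spell out a couple of points the paper leaves implicit (that $(M_k,\mathbf{m}+\pi^kM)$ is a free realisation of $\phi_k$, and the decomposition $\mathbf{n}=f(\mathbf{m})+(\mathbf{n}-f(\mathbf{m}))$ rather than the join-of-formulas phrasing), but there is no substantive difference.
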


\begin{proof}
Let $M$ be a $\Lambda$-lattice and suppose that $\phi$ is freely realised by $\mathbf{m}\in M$. Then, by definition, $\phi_k$ is freely realised by $\mathbf{m}+\pi^kM$ in $M_k$.

Let $N$ be an $R$-torsionfree $\Lambda$-module. If $\mathbf{n} \in \phi(N)$ then there exist
a morphism $f : M \rightarrow N$ sending $\mathbf{m}$ to $\mathbf{n}$. Consequently
$\overline{f} : M_k \rightarrow N_k$ sends $\mathbf{m}+ \pi^kM$ to $\mathbf{n}+\pi^kN$. Therefore $\mathbf{n}+ \pi^k N \in \phi_k(N_k)$.

Now suppose that $\mathbf{n} + \pi^kN \in \phi_k(N_k)$. Then there exists a morphism $f : M_k
\rightarrow N_k$ sending $\mathbf{m}+\pi^kM$ to $\mathbf{n}+\pi^kN$. By \ref{onelatt}, there exists $g\in \Hom_{\Lambda}(M,N)$ such that $g(m_i)-n_i\in \pi^{k-k_0}N$ for all $i = 1, \ldots, l$.
Thus $\mathbf{n}$ satisfies the pp-formula $\bigwedge_{i=1}^l\pi^{k-k_0} \mid x_i + \phi(\mathbf{x})$.
Since $k-k_0\geq 1$ and $\phi\geq \bigwedge_{i=1}^l\pi^{k-k_0} \mid x_i$, $\mathbf{n}\in\phi(N)$.
\end{proof}

Given $\phi\in\pp_{\Lambda_k}^n$, we define $\phi^*\in\pp_{\Lambda}^n$ such that for all $N\in\Mod\text{-}\Lambda$, $\textbf{m}\in \phi^*(N)$ if and only if $\textbf{m}+\pi^{k}N\in \phi(N_k)$. Suppose $\phi\doteq \exists \mathbf{y} \ (\mathbf{x}\mathbf{y})T=0$ where $T$ is an appropriately sized matrix with entries from $\Lambda_k$. Further suppose that $T_{ij}:=t_{ij}+\Lambda\pi^k$ where $t_{ij}\in\Lambda$. Let $T^*$ be the matrix with entries $t_{ij}$ and let $\phi^*:=\exists\mathbf{y} \ \pi^k|(\mathbf{x}\mathbf{y})T^*$. A quick computation shows that for all $N\in\Mod\text{-}\Lambda$, $\mathbf{m}\in\phi^*(N)$ if and only if $\mathbf{m}+N\pi^k\in\phi(N_k)$ as required. Using this property of $\phi^*$, one can check that for all $N\in\Mod\text{-}\Lambda$, the map which sends $\phi(N_k)\in\pp_{\Lambda_k}^n(N_k)$ to $\phi^*(N)\in \pp_\Lambda^n(N)$ is a lattice homomorphism. Note that $(\pi^{k-k_0}+\Lambda\pi^k|\mathbf{x})^*$ is equivalent to $\pi^{k-k_0}|\mathbf{x}$.

The next proposition applies to arbitrary $R$-torsionfree $\Lambda$-modules.
In its statement we write $\pi^{k-k_0}|\mathbf{x}$ to mean the pp-$n$-formula $\bigwedge_{i=1}^n\pi^{k-k_0}|x_i$. Recall that, for
every pp-formula $\phi(x)\in\pp^n_\Lambda$, $\overline{\phi}(x)$
is the pp-formula associated to $\phi(x)$ defined just before Lemma \ref{overline}.

\begin{prop}\label{added}
Let $k>k_0$. The map from the closed interval $[\pi^{k-k_0}|\mathbf{x},\mathbf{x}=\mathbf{x}]$ of $\pp_\Lambda^n$ to the interval $[\pi^{k-k_0}|\mathbf{x},\mathbf{x}=\mathbf{x}]$ of $\pp_{\Lambda_k}^n$ which sends any $\phi\in [\pi^{k-k_0}|\mathbf{x},\mathbf{x}=\mathbf{x}]$ to $\overline{\phi}_k\in[\pi^{k-k_0}|\mathbf{x},\mathbf{x}=\mathbf{x}]$ induces a lattice isomorphism from $[\pi^{k-k_0}|\mathbf{x},\mathbf{x}=\mathbf{x}]_N$ to $[\pi^{k-k_0}+\pi^k\Lambda |\mathbf{x}, \mathbf{x}=\mathbf{x}]_{N_k}$ for all $N\in \Tf_\Lambda$.

In particular, this lattice isomorphism is inverse to the lattice homomorphism which sends $\psi(N_k)\in [\pi^{k-k_0}+\pi^k\Lambda |\mathbf{x}, \mathbf{x}=\mathbf{x}]_{N_k}$ to $\psi^*(N)\in [\pi^{k-k_0}|\mathbf{x},\mathbf{x}=\mathbf{x}]_N$ for all $N\in\Tf_\Lambda$.
\end{prop}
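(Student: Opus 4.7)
The plan is to exhibit $\phi\mapsto\overline{\phi}_k$ and $\psi\mapsto\psi^*$ as mutually inverse order-preserving bijections between the two quotient intervals $[\pi^{k-k_0}|\mathbf{x},\mathbf{x}=\mathbf{x}]_N$ and $[\pi^{k-k_0}+\pi^k\Lambda|\mathbf{x},\mathbf{x}=\mathbf{x}]_{N_k}$. Because the second map is already a lattice homomorphism, as noted in the text immediately preceding the proposition, and mutually inverse order-preserving bijections between lattices automatically preserve all lattice operations, this will yield the claimed lattice isomorphism together with the asserted inverse.

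First I would verify that $\phi\mapsto\overline{\phi}_k$ is well defined on $[\pi^{k-k_0}|\mathbf{x},\mathbf{x}=\mathbf{x}]$ and lands in the target interval. By the construction preceding Lemma \ref{overline}, $\overline{\phi}$ has a free realisation in $M/\Tor M$, which is a $\Lambda$-lattice, so $\overline{\phi}_k$ is defined as soon as $\overline{\phi}\geq\pi^{k-k_0}|\mathbf{x}$. Since $\pi^{k-k_0}|\mathbf{x}$ already has a free realisation in the lattice $\Lambda^n$, it is fixed by $\overline{\cdot}$; combined with the monotonicity provided by Lemma \ref{overline} this forces $\overline{\phi}\geq\pi^{k-k_0}|\mathbf{x}$, and a direct calculation on free realisations then gives $\overline{\phi}_k\geq\pi^{k-k_0}+\pi^k\Lambda|\mathbf{x}$. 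Applying Lemma \ref{boh} to $\overline{\phi}$ and using $\overline{\phi}(N)=\phi(N)$ from Lemma \ref{overline} yields the explicit formula
\[
\overline{\phi}_k(N_k)=\{\mathbf{n}+\pi^kN:\mathbf{n}\in\phi(N)\},
\]
which shows simultaneously that $\overline{\phi}_k(N_k)$ depends only on $\phi(N)$ and that inclusions in $N$ transfer to inclusions in $N_k$, so the induced map on the quotient intervals is both well defined and order preserving.

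Next I would check that the two compositions with $\psi\mapsto\psi^*$ are the identity on the respective quotient lattices. For $\phi\mapsto\overline{\phi}_k\mapsto(\overline{\phi}_k)^*$, the definition of $\cdot^*$ combined with Lemma \ref{boh} reduces everything to $(\overline{\phi}_k)^*(N)=\overline{\phi}(N)=\phi(N)$. For the other direction, given $\psi\geq\pi^{k-k_0}+\pi^k\Lambda|\mathbf{x}$, the equivalence of $(\pi^{k-k_0}+\pi^k\Lambda|\mathbf{x})^*$ with $\pi^{k-k_0}|\mathbf{x}$ recorded in the text, together with monotonicity of $\cdot^*$, gives $\psi^*\geq\pi^{k-k_0}|\mathbf{x}$, and hence $\overline{\psi^*}\geq\pi^{k-k_0}|\mathbf{x}$ as in the previous paragraph. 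Since $N$ is torsionfree, Lemma \ref{overline} gives $\overline{\psi^*}(N)=\psi^*(N)$, and a second application of Lemma \ref{boh} then produces
\[
\overline{\psi^*}_k(N_k)=\{\mathbf{n}+\pi^kN:\mathbf{n}\in\psi^*(N)\}=\psi(N_k),
\]
the last equality because every element of $N_k$ is represented by some $\mathbf{n}+\pi^kN$ with $\mathbf{n}\in N$.

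The step I expect to require the most care is the repeated verification that, each time Lemma \ref{boh} is invoked, the formula it is applied to satisfies both hypotheses there, namely that it is freely realised in a $\Lambda$-lattice and dominates $\pi^{k-k_0}|\mathbf{x}$. Once these conditions are checked for $\overline{\phi}$ and for $\overline{\psi^*}$, the remainder is a straightforward unwinding of the definitions of $\overline{\cdot}$, $\cdot_k$ and $\cdot^*$.
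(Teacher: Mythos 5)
Your proposal is correct and takes essentially the same route as the paper: establish well-definedness of $\phi\mapsto\overline{\phi}_k$ into the target interval, apply Lemma~\ref{boh} (to $\overline{\phi}$ and to $\overline{\psi^*}$) together with $\overline{\phi}(N)=\phi(N)$ for $N\in\Tf_\Lambda$ to check that both compositions with $\psi\mapsto\psi^*$ are identities, and conclude from the general fact that a mutually inverse pair of order-preserving maps between lattices is a lattice isomorphism. The only cosmetic difference is that the paper phrases the final step as the set-wise inverse of a lattice homomorphism being a lattice isomorphism, which amounts to the same observation.
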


\begin{proof}
First note that $\pi^{k-k_0}|\mathbf{x}$ is freely realised by the n-tuple from $\Lambda$ with all entries $\pi^{k-k_0}$. Thus $\pi^{k-k_0}|\mathbf{x}\leq \phi$ implies $\pi^{k-k_0}|\mathbf{x}\leq \overline{\phi}$. So $\overline{\phi}_k$ is defined. Suppose $\pi^{k-k_0}|\mathbf{x}\leq \phi$. Then $(\pi^{k-k_0},\ldots,\pi^{k-k_0})\in\phi(\Lambda)=\overline{\phi}(\Lambda)$. So $(\pi^{k-k_0}+\Lambda\pi^k,\ldots,\pi^{k-k_0}+\Lambda\pi^k)\in\overline{\phi}_k(\Lambda_k)$. Hence $\pi^{k-k_0}+\Lambda\pi^k|\mathbf{x}\leq \overline{\phi}_k$.

Recall that $(\pi^{k-k_0}+\Lambda\pi^k|\mathbf{x})^*$ is equivalent to $\pi^{k-k_0}|\mathbf{x}$. So $\psi(N_k)\mapsto \psi^*(N)$ defines a lattice homomorphism from $[\pi^{k-k_0}+\pi^k\Lambda |\mathbf{x}, \mathbf{x}=\mathbf{x}]_{N_k}$ to $[\pi^{k-k_0}|\mathbf{x},\mathbf{x}=\mathbf{x}]_N$ for all $N\in\Tf_\Lambda$.

Since, when it exists, the set-wise inverse of a lattice homomorphism is a lattice isomorphism, it is therefore enough to show that for all $N\in\Tf_\Lambda$, $\phi\in[\pi^{k-k_0}|\mathbf{x},\mathbf{x}=\mathbf{x}]$ and $\psi\in [\pi^{k-k_0}+\pi^k\Lambda|\mathbf{x}, \mathbf{x}=\mathbf{x}]$, $(\overline{\phi}_k)^*(N)=\phi(N)$ and $\overline{(\psi^*)}_k(N_k)=\psi(N_k)$. But this follows from Lemma \ref{boh} and the property of $\phi^*$ described just before this proposition.
\end{proof}

\section{Finite lattice representation type}\label{S-finite}

In this final section we recover our largest setting and we deal
with a Dedekind domain $R$ which is not a field, with its field of fractions $Q$
and with an $R$-order $\Lambda$ in a finite dimensional $Q$-algebra $A$.

Recall that $\Lambda$ is said to be of {\sl finite lattice representation type} if
it has only finitely many non isomorphic indecomposable lattices. Our aim is
to obtain a complete description of $\Zg_\Lambda^{tf}$ when $\Lambda$ is of
finite lattice representation type.

But let us first concern ourselves with the m-dimension of $\pp_\Lambda^1 (\Tf_\Lambda)$ under
the finite lattice representation type hypothesis.

\begin{prop}\label{md3}
Let $\Lambda$ be an order over a Dedekind domain $R$. If $\Lambda$ is
finite lattice representation
type then $\text{m-dim} \, \pp_\Lambda^1 (\Tf_\Lambda)=1$.
\end{prop}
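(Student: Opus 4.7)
The plan is to reduce the statement to Corollary \ref{md2} applied to a single ``universal'' lattice. Let $L_1,\ldots,L_r$ be a complete set of representatives of the (finitely many) isomorphism classes of indecomposable $\Lambda$-lattices and set $L := L_1 \oplus \cdots \oplus L_r$. Since a finite direct sum of $R$-torsionfree finitely generated $\Lambda$-modules is again of the same kind, $L$ is itself a $\Lambda$-lattice, and Corollary \ref{md2} yields immediately $\text{m-dim}\,\pp_\Lambda^1(L) = 1$.

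The core of the argument is then to show that the natural surjective lattice homomorphism $\pp_\Lambda^1(\Tf_\Lambda) \twoheadrightarrow \pp_\Lambda^1(L)$ is in fact an isomorphism. Recall from the introduction that $\Tf_\Lambda$ is the smallest definable subcategory containing $\Latt_\Lambda$, so a pp-pair $\psi\leq\phi$ satisfies $\psi(M) = \phi(M)$ for every $M \in \Tf_\Lambda$ if and only if it does so for every $M \in \Latt_\Lambda$. By Fact \ref{oplus}, every $\Lambda$-lattice $M$ decomposes as $L_1^{a_1} \oplus \cdots \oplus L_r^{a_r}$, so for $k := \max_i a_i$, $M$ is a direct summand of $L^k$. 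Since pp-subgroups are additive on finite direct sums and are preserved by (pure, hence direct) summands, the equality $\phi(L) = \psi(L)$ forces $\phi(L^k) = \psi(L^k)$ and then $\phi(M) = \psi(M)$ for every lattice $M$.

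Combining the two observations, $\pp_\Lambda^1(\Tf_\Lambda)$ and $\pp_\Lambda^1(L)$ are isomorphic lattices, so $\text{m-dim}\,\pp_\Lambda^1(\Tf_\Lambda) = 1$, as claimed. I do not foresee any serious obstacle here: the argument is short and uses only Fact \ref{oplus}, Corollary \ref{md2}, and the fact that pp-closedness on a definable subcategory is detected on any generating subclass. The only point worth stressing is the reduction of closedness on $\Tf_\Lambda$ to closedness on the single lattice $L$, which is where the finite lattice representation type hypothesis is used in an essential way.
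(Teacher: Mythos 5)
Your proof is correct and takes the same overall route as the paper's: form the ``universal'' lattice $L=\bigoplus_i L_i$, observe via Corollary~\ref{md2} that $\text{m-dim}\,\pp_\Lambda^1(L)=1$, and argue that the canonical surjection $\pp_\Lambda^1(\Tf_\Lambda)\twoheadrightarrow\pp_\Lambda^1(L)$ is an isomorphism. The only difference is how the isomorphism step is justified. The paper invokes Fact~\ref{oplus} together with Proposition~\ref{lattpi}, which asserts pure injectivity of lattices and density of indecomposable lattices in $\Zg_\Lambda^{tf}$ — but that proposition is stated under the hypothesis that $R$ is a \emph{complete discrete valuation domain}, whereas Proposition~\ref{md3} is over an arbitrary Dedekind domain. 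Your argument sidesteps this: you use only Fact~\ref{oplus} (so that every lattice is a direct summand of some $L^k$), the additivity of pp-subgroups under finite direct sums and summands, and the fact — recorded in the introduction — that $\Tf_\Lambda$ is the smallest definable subcategory containing $\Latt_\Lambda$, so a pp-pair closes on $\Tf_\Lambda$ as soon as it closes on all lattices. This is cleaner, self-contained, and manifestly valid in the generality required by the proposition. What the paper's version would buy, if one accepts the density statement in this generality, is a more topological picture (lattices dense in $\Zg_\Lambda^{tf}$); what yours buys is a purely lattice-theoretic/definability argument with no hypothesis mismatch.
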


\begin{proof}
Let $L_1,\ldots,L_n$ be a complete list of indecomposable $\Lambda$-lattices up to isomorphism. By Fact \ref{oplus} and Proposition \ref{lattpi}, the canonical surjection from $\pp_\Lambda^1 (\Tf_\Lambda)$ to $\pp_\Lambda^1 (\oplus_{i=1}^nL_i)$ is an isomorphism. Since $\oplus_{i=1}^nL_i$ is a $\Lambda$-lattice, by Corollary \ref{md2}, $\pp_\Lambda^1 (\oplus_{i=1}^nL_i)$ has m-dimension $1$.

%
\end{proof}


Here is a first consequence of this proposition.
Let $S(\Lambda)$ be the set of maximal ideals $P$ of $R$ such that $\Lambda_P$ is not maximal. If $Q\Lambda$ is separable then $S(\Lambda)$ is finite, see \cite[p. 642]{CR}. Moreover, see \cite[11.5]{MO}, $\Lambda_P$ is maximal if and only if $\widehat{\Lambda_P}$ is maximal. By \cite[18.1]{MO}, if $\widehat{\Lambda_P}$ is maximal then $\widehat{\Lambda_P}$ is hereditary. So, by \cite[10.6]{MO}, all $\widehat{\Lambda_P}$-lattices are projective. Finally, since that category of $\widehat{\Lambda_P}$-lattices is Krull-Schmidt, there are only finitely many indecomposable projective lattices and hence $\widehat{\Lambda_P}$ is finite lattice representation type.

\begin{cor}\label{md4}
Suppose that $Q\Lambda$ is separable and $S(P)$ non-empty. The m-dimension of $\pp_{\Lambda}^1(\Tf_\Lambda)$ is equal to $\text{max}_{P\in S(\Lambda)}\pp_{\widehat{\Lambda_P}}^1(\Tf_{\widehat{\Lambda_P}})$.
\end{cor}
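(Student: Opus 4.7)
The plan is to deduce this from Corollary \ref{mdimloctoglob} by separating maximal ideals $P$ into those in $S(\Lambda)$ and those outside, and observing that outside of $S(\Lambda)$ the local m-dimension is always exactly $1$, which cannot increase the supremum beyond the $S(\Lambda)$-max.

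First, since $Q\Lambda$ is separable, Corollary \ref{mdimloctoglob} gives
\[
\text{m-dim}\,\pp^1_\Lambda(\Tf_\Lambda)=\sup_{P}\text{m-dim}\,\pp^1_{\widehat{\Lambda_P}}(\Tf_{\widehat{\Lambda_P}})
\]
where $P$ ranges over all maximal ideals of $R$. Thus it suffices to analyse the two types of $P$ separately. For $P\notin S(\Lambda)$, the chain of references assembled in the paragraph preceding the corollary (namely \cite[11.5]{MO}, \cite[18.1]{MO}, \cite[10.6]{MO}, together with Krull--Schmidt for orders over complete DVRs) shows that $\widehat{\Lambda_P}$ is of finite lattice representation type. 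Proposition \ref{md3} then yields $\text{m-dim}\,\pp^1_{\widehat{\Lambda_P}}(\Tf_{\widehat{\Lambda_P}})=1$.

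Next I would observe that for any $P\in S(\Lambda)$ the local m-dimension is at least $1$: indeed $\widehat{\Lambda_P}$ is itself a non-zero $\widehat{\Lambda_P}$-lattice, so by Corollary \ref{md2} we have $\text{m-dim}\,\pp^1_{\widehat{\Lambda_P}}(\widehat{\Lambda_P})=1$, and since $\widehat{\Lambda_P}\in\Tf_{\widehat{\Lambda_P}}$ the inequality $\text{m-dim}\,\pp^1_{\widehat{\Lambda_P}}(\Tf_{\widehat{\Lambda_P}})\geq 1$ follows. Combined with the separability of $Q\Lambda$, which makes $S(\Lambda)$ finite (\cite[p.~642]{CR}), and with the hypothesis that $S(\Lambda)$ is non-empty, this means that $\max_{P\in S(\Lambda)}\text{m-dim}\,\pp^1_{\widehat{\Lambda_P}}(\Tf_{\widehat{\Lambda_P}})$ is a well-defined ordinal $\geq 1$.

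Putting the two parts together: the contributions to the supremum coming from $P\notin S(\Lambda)$ are all equal to $1$, hence are dominated by any term in the $S(\Lambda)$-max; while the sup of the finitely many terms indexed by $P\in S(\Lambda)$ is a genuine maximum. Therefore
\[
\sup_{P}\text{m-dim}\,\pp^1_{\widehat{\Lambda_P}}(\Tf_{\widehat{\Lambda_P}})=\max_{P\in S(\Lambda)}\text{m-dim}\,\pp^1_{\widehat{\Lambda_P}}(\Tf_{\widehat{\Lambda_P}}),
\]
which combined with the displayed equation above gives the claim. There is no real obstacle: the only subtlety is bookkeeping the finiteness/non-emptiness of $S(\Lambda)$ in order to replace the sup by a max, and noticing that the ``other'' primes cannot push the dimension up because Proposition \ref{md3} nails their value at exactly $1$.
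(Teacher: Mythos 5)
Your proof is correct and follows essentially the same route as the paper's: reduce via Corollary \ref{mdimloctoglob} to a supremum of local m-dimensions, use Proposition \ref{md3} (together with the preparatory discussion) to pin the contribution of each $P\notin S(\Lambda)$ at exactly $1$, and note that every local m-dimension is at least $1$ so the supremum is achieved on the finite nonempty set $S(\Lambda)$. The paper states this more tersely (omitting the explicit citation of Corollary \ref{md2} and the sup-to-max bookkeeping), but the underlying argument is identical.
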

\begin{proof}
By \ref{md3}, if $P\notin S(\Lambda)$ then $\pp_{\widehat{\Lambda_P}}^1(\Tf_{\widehat{\Lambda_P}})$ has m-dimension $1$. For any maximal ideal $P$ of $R$, the m-dimension of $\pp_{\widehat{\Lambda_P}}^1(\Tf_{\widehat{\Lambda_P}})$ is greater than or equal to $1$.
\end{proof}

Note that, if $S(P)$ is empty, then the m-dimension is 1.

The following is a  further consequence of Proposition \ref{md3}: if $\Lambda$ is of finite lattice representation type, then $\Zg_\Lambda^{tf}$ has the isolation property and so its Cantor-Bendixson
rank is equal to m-dim $pp_\Lambda^1 (\Tf_\Lambda)$, namely it is 1 (see \cite[Proposition 5.3.17]{Preb2}).
Let us confirm this conclusion in a different way, also providing the topological description of
$\Zg_\Lambda^{tf}$ we promised before. For this we focus on completions of localizations of $\Lambda$
at maximal ideals of $R$, so at complete discrete valuation domains. First an easy fact - a sort of
converse of the main result we are going to prove.

\begin{prop}\label{cblatt}
Let $R$ be a complete discrete valuation domain and $\Lambda$ an order over $R$. If
$\Zg_\Lambda^{tf}$ has Cantor-Bendixson rank $1$ then $\Lambda$ is finite lattice representation
type.
\end{prop}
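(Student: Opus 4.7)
The plan is to argue by contradiction: assume $X := \Zg_\Lambda^{tf}$ has Cantor--Bendixson rank $1$ but $\Lambda$ has infinitely many isomorphism classes of indecomposable lattices $L_1, L_2, \ldots$. First I would unpack the topological hypothesis. CB rank $1$ means $X^{(1)}$ is non-empty and $X^{(2)}=\emptyset$, so every point of $X^{(1)}$ is isolated in $X^{(1)}$; since $X^{(1)}$ is closed in the compact space $X$, it is compact discrete, hence finite. By Proposition \ref{lattpi}, indecomposable lattices are pure injective and form a dense subset of $X$; therefore every isolated point of $X$ must itself be an indecomposable lattice, because its singleton open must meet this dense family. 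Hence the non-isolated indecomposable lattices are contained in the finite set $X^{(1)}$, and after discarding that finite exception we may assume all of the $L_i$ are isolated points of $X$.

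Next I would extract the structural data around each $L_i$. By Proposition \ref{closure}, $\overline{\{L_i\}} = \{L_i\} \cup S_i$, where $S_i$ is a finite set of closed points, each an indecomposable direct summand of $QL_i$. If some $M \in S_i$ were isolated then $\{M\}$ would be an open neighbourhood of $M$, yet $M \in \overline{\{L_i\}}$ forces $L_i \in \{M\}$, contradicting $L_i \neq M$. Hence $S_i \subseteq X^{(1)}$, and $\bigcup_i S_i$ lies in the finite set $X^{(1)}$. So the indecomposable pure-injective $A$-summands of the various $QL_i$ are drawn from a single finite pool $\{N_1,\ldots,N_k\} \subseteq X^{(1)}$, and by Krull--Schmidt over $A$ we have decompositions $QL_i \cong \bigoplus_j N_j^{n_{ij}}$.

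The main obstacle is converting this configuration into a contradiction. By compactness of $X$, the infinite discrete family $\{L_i\}$ has an accumulation point $M \in X^{(1)}$. Using compactness of basic opens in the Ziegler spectrum together with finiteness of $X^{(1)}$, I would promote accumulation to convergence: every basic open $(\phi/\psi)$ containing $M$ contains all but finitely many $L_i$. Otherwise the $L_i$ outside $(\phi/\psi)$ would form an infinite subset of the compact closed set $X \setminus (\phi/\psi)$, accumulating at some $M' \in X^{(1)} \setminus \{M\}$; isolating $M$ from $M'$ in the finite $X^{(1)}$ by a further pp-pair yields a contradiction. I then aim to use this convergence, together with Proposition \ref{closure}, to force $M \in S_i$ (so $M$ is a direct summand of $QL_i$) for infinitely many $i$. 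Combined with the fact that the whole decomposition pattern of each $QL_i$ is constrained to the finite pool $\{N_1,\ldots,N_k\}$ and a Jordan--Zassenhaus-type finiteness over the complete local ring $R$ (asserting that, for each fixed finite-dimensional $A$-module $V$, the $\Lambda$-lattices $L$ with $QL \cong V$ comprise finitely many isomorphism classes), this should bound the $L_i$ up to isomorphism and deliver the required contradiction. The delicate point is making the passage from topological convergence to the structural constraint strong enough to invoke the Jordan--Zassenhaus finiteness uniformly across unbounded multiplicities $n_{ij}$.
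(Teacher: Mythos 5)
There are genuine gaps here, and the paper in fact proves the proposition by a much shorter and quite different route. Your setup through finiteness of $X^{(1)}$, the identification of isolated points with indecomposable lattices, and the finite pool $\{N_1,\dots,N_k\}$ of rational summands is all correct. But the \emph{accumulation-to-convergence} step is not: if $M$ is an accumulation point of $\{L_i\}$, then every open neighbourhood of $M$ contains infinitely many $L_i$, not all but finitely many, and the family $\{L_i\}$ may perfectly well cluster at several points of $X^{(1)}$ at once; producing a second accumulation point $M'$ yields no contradiction, so the claimed convergence does not follow. Even granting convergence, $M\in\overline{\{L_i\}}$ for a \emph{fixed} $i$ does not follow either (the closure of a union is generally strictly larger than the union of the closures), so ``$M\in S_i$ for infinitely many $i$'' is unjustified. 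Finally, as you say yourself, the rational hulls $QL_i\cong\bigoplus_j N_j^{n_{ij}}$ range over an a priori infinite family of $A$-modules because the multiplicities are unbounded, so Jordan--Zassenhaus finiteness for a fixed $V$ does not bound the $L_i$. These are not small holes; the whole contradiction route is stuck.

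The paper's proof is direct and avoids accumulation points entirely. Since the Cantor--Bendixson rank of $X=\Zg_\Lambda^{tf}$ is $1$, $X^{(1)}$ is discrete, and one checks that every non-isolated point of $X$ is in fact a \emph{closed} point of $X$: if $q\in\overline{\{p\}}$ with $q\neq p$ and $q$ isolated, then $\{q\}$ is an open neighbourhood of $q$ not containing $p$, a contradiction; and if $q\in X^{(1)}$, take an open $U$ with $U\cap X^{(1)}=\{q\}$, which again cannot contain $p\in X^{(1)}$. Now Corollary~\ref{closedina} says closed points of $\Zg_\Lambda^{tf}$ lie in $\Zg_A$, hence are $\pi$-divisible. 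Contrapositively, every point of the compact set $(x=x/\pi\mid x)\cap\Zg_\Lambda^{tf}$ is non-closed, hence isolated, and a compact space all of whose points are isolated is finite. By Proposition~\ref{lattpi} every indecomposable $\Lambda$-lattice is a pure-injective point of $\Zg_\Lambda^{tf}$, and, being finitely generated over the local ring $R$, it is not $\pi$-divisible; so all indecomposable lattices lie in this finite set, which is exactly the conclusion. This argument needs neither Proposition~\ref{closure} nor any representation-theoretic finiteness theorem.
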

\begin{proof}
Let $\pi$ generate the maximal ideal of $R$.
If $N \in \left(x=x/ \pi \mid x \right)$ then $N$ is not $\pi$-divisible and hence by Corollary \ref{closedina}
is not closed. Therefore $\left(x=x / \pi \mid x\right)$ contains only isolated points. Since $\left(x=x / \pi \mid
x \right)$ is compact, it must be finite.
Since all indecomposable $\Lambda$-lattices are pure injective and not $\pi$-divisible, $\Lambda$ is
of finite lattice representation type.
\end{proof}

Next we provide the description of $\Zg_\Lambda^{tf}$ when $\Lambda$ is an order over a complete
discrete valuation domain.

\begin{prop}\label{flt1}
Suppose that $R$ is a complete discrete valuation domain with field of fractions $Q$,
$\Lambda$ is an order over $R$, $A = Q\Lambda$ is a semisimple $Q$-algebra
and $\Lambda$ is of finite lattice representation type. Then the set  $\Zg_\Lambda^{tf}$
consists exactly of
\begin{itemize}
\item finitely many indecomposable lattices over $\Lambda$,
\item finitely many simple $A$-modules.
\end{itemize}
The indecomposable modules over $A$ are closed points. If $N$ is an indecomposable lattice then
a simple $A$-module $M$ is in the closure of $N$ if and only if $M$ is a direct  summand of $Q N$.
\end{prop}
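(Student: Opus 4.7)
The plan is to combine three results from Section \ref{S-latt}: Proposition \ref{lattpi} (every $\Lambda$-lattice is pure injective and the indecomposable $\Lambda$-lattices form a dense subset of $\Zg_\Lambda^{tf}$), Proposition \ref{closure} (the closure of an indecomposable lattice $N$ outside $\{N\}$ consists of indecomposable direct summands of $QN$), and Lemma \ref{close1} (each summand of $QN$ lies in the closure of $N$). Finite lattice representation type reduces everything to a finite calculation.

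First I would list the finitely many indecomposable $\Lambda$-lattices $N_1,\ldots,N_s$ up to isomorphism. By Proposition \ref{lattpi} each $N_i$ belongs to $\Zg_\Lambda^{tf}$, and together they form a dense subset. Because this set is finite, its closure is $\bigcup_{i=1}^s \overline{\{N_i\}}$, and by density this union exhausts $\Zg_\Lambda^{tf}$. Any point lying in some $\overline{\{N_i\}}\setminus\{N_i\}$ is, by Proposition \ref{closure}, an indecomposable direct summand of $QN_i$; since $A$ is semisimple, $QN_i$ is a finite direct sum of simple $A$-modules, so the residual points are simple $A$-modules. Conversely, Lemma \ref{close1} shows every such simple summand actually lies in $\overline{\{N_i\}}\subseteq \Zg_\Lambda^{tf}$. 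The set of simple $A$-modules arising this way is finite since there are finitely many $N_i$ and each $QN_i$ has only finitely many simple summands. This proves the set-theoretic description.

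For the topological part, I would first show that the simple $A$-modules are closed points of $\Zg_\Lambda^{tf}$. The set $\Zg_A$ embeds as a closed subset of $\Zg_\Lambda^{tf}$ (it is the complement of the open set $(x=x/\pi\mid x)$, where $\pi$ generates the maximal ideal of $R$). Semisimplicity of $A$ makes the pp-lattice $\pp_A^1$ trivial, so $\Zg_A$ is discrete, each simple $A$-module is a closed point of $\Zg_A$, and closedness transfers to $\Zg_\Lambda^{tf}$. The characterization of the closure of an indecomposable lattice is then immediate: the "only if" direction is Proposition \ref{closure}, and the "if" direction is Lemma \ref{close1}.

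There is no genuine obstacle here; the proof is essentially an assembly of previously established facts. The main conceptual point is that finite lattice representation type forces the dense set supplied by Proposition \ref{lattpi} to be finite, so Proposition \ref{closure} can be invoked on each of finitely many indecomposable lattices to classify every remaining point of $\Zg_\Lambda^{tf}$. Note that separability of $A$ is not required for this statement, only semisimplicity (separability would be needed to conclude the $N_i$ are isolated, which is not claimed here).
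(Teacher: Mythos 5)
Your proof is essentially the same as the paper's: both use Proposition \ref{lattpi} for density, observe that finite lattice representation type makes the dense set finite (so $\Zg_\Lambda^{tf}$ is the union of the closures of $N_1,\ldots,N_s$), apply Proposition \ref{closure} to identify the residual points as direct summands of the $QN_i$, and invoke Lemma \ref{close1} for the ``if'' direction of the closure description. Where you depart slightly is in the topology: the paper cites Corollary \ref{closedina} (to say closed points are $A$-modules) and then, for the topological description, cites both Lemma \ref{close1} and Lemma \ref{lattiso}; you instead observe directly that $\Zg_A$ is a closed discrete finite subset of $\Zg_\Lambda^{tf}$ because $A$ is semisimple, which bypasses \ref{lattiso} entirely. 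That is a worthwhile simplification, because \ref{lattiso} (isolation of indecomposable lattices) is stated under the stronger hypothesis that $A$ is separable, whereas Proposition \ref{flt1} only assumes $A$ semisimple --- so strictly speaking the paper's citation of \ref{lattiso} at this point is only justified under an unstated extra assumption, while your route uses only what the proposition hypothesizes. Your closing remark to this effect is a correct and useful observation.
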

\begin{proof}
By Proposition \ref{lattpi}, the set of indecomposable $\Lambda$-lattices is dense in $\Zg_\Lambda^{tf}$. Thus if $N_1,\ldots,N_n$ are the indecomposable lattices over $\Lambda$, then $\Zg_\Lambda^{tf}$ coincides with the closure of $\{N_1,\ldots N_n\}$ and hence of the union of the closures of the $N_i$ with $i = 1, \ldots, n$.

In Proposition \ref{closure} we showed that any point in the closure of $N$ an indecomposable lattice which is not equal to $N$ is a closed point. By Corollary \ref{closedina}, any closed point is an $A$-module. Thus we have shown that $\Zg_\Lambda^{tf}$ has exactly the points stated in the proposition.

The description of the topology follows from Lemmas \ref{close1} and \ref{lattiso}.
\end{proof}

\begin{cor}\label{nonfg}
Assume $R$ and $\Lambda$ as before, hence in particular $\Lambda$ of finite
lattice representation type. Let $p$ be a non finitely generated indecomposable
pp-type in the theory $T_\Lambda^{tf}$ of $R$-torsionfree $\Lambda$-modules.
Then $p$ contains all divisibility formulas $\pi^k \mid x$ for $k$ a positive integer.
\end{cor}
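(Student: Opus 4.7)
The plan is to pass to the pure injective hull $N = N(p)$ of $p$ and invoke the classification of $\Zg_\Lambda^{tf}$ given by Proposition \ref{flt1}. Because $p$ is indecomposable and consistent with $T_\Lambda^{tf}$, $N$ is an indecomposable pure injective $R$-torsionfree $\Lambda$-module, i.e. a point of $\Zg_\Lambda^{tf}$; by Proposition \ref{flt1} it is then either one of the finitely many indecomposable $\Lambda$-lattices or a simple $A$-module. The task then splits into these two cases.

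First I would rule out the possibility that $N$ is an indecomposable $\Lambda$-lattice. The key observation is that $\Lambda$ is noetherian (since $R$ is noetherian and $\Lambda$ is finitely generated over $R$), so every $\Lambda$-lattice is finitely presented. Let $m \in N$ realise $p$: starting from a finite presentation of $N$ and writing $m$ as a $\Lambda$-linear combination of the generators, one obtains a single pp-formula $\phi$ such that $(N, m)$ is a free realisation of $\phi$. Then $\phi$ generates the pp-type of $m$ in $N$, namely $p$, with respect to $T_\Lambda$ and a fortiori with respect to $T_\Lambda^{tf}$, contradicting the non-finite-generation hypothesis on $p$.

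Hence $N$ must be a simple $A$-module. Since $A$ is a $Q$-algebra, every non-zero element of $R$, and in particular each power $\pi^k$, acts invertibly on $N$; so multiplication by $\pi^k$ is bijective on $N$ for every $k \geq 1$. Consequently $\pi^k \mid x$ is satisfied by every element of $N$, and therefore belongs to $p$ for every $k \geq 1$. The main (and rather mild) obstacle is the verification that an element realising $p$ inside $N(p)$ actually has pp-type equal to $p$ in $N(p)$, which is a standard property of the pure injective hull of an indecomposable pp-type; once this is granted, the rest is a direct application of Proposition \ref{flt1}.
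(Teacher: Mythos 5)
Your proof is correct and takes essentially the same route the paper intends: the paper's one-line proof (``Any element of a simple $A$-module realizes all these formulas'') silently relies on the fact that, since $p$ is indecomposable and not finitely generated, its hull $N(p)$ cannot be one of the finitely presented indecomposable lattices classified in Proposition \ref{flt1} and so must be a simple $A$-module, which is exactly the step you spell out.
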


\begin{proof}
Any element of a simple $A$-module realizes all these formulas.
\end{proof}

Note that Herzog and Puninskaya verified a similar result for torsionfree modules
over 1-dimensional commutative noetherian local complete domains, see
\cite[Theorem 6.6]{HP}.

Now let us come back to a Dedekind domain $R$ and to an $R$-order $\Lambda$
in a separable $Q$-algebra $A$. The following hold:
\begin{itemize}
\item (\cite[Exercise 4.7 p. 99]{CR} there are only finitely many maximal ideals
$P$ of $R$ such that $\widehat{\Lambda_P}$ is not a maximal order;
\item (\cite[Proposition 33.1]{CR} if $\widehat{\Lambda_P}$ is maximal, then
it is of finite lattice representation type, and hence the topology of
$\Zg_{\widehat{\Lambda_P}}^{tf}$ is that described in Proposition \ref{md3}.
\end{itemize}

Recall that this topology is the same both in $\Zg_{\widehat{\Lambda_P}}^{tf}$
and $\Zg^{tf}_{\Lambda_P}$, when restricted to its $R$-reduced part (Theorem \ref{top}).

By the proof of \cite[Theorem 33.2]{CR}, if $\Lambda$ is of finite lattice representation type
then each non maximal $\Lambda_P$ is.

On this basis it is easy to deduce:

\begin{theorem}\label{end}
Let $R$ be a Dedekind domain and
$\Lambda$ an $R$-order in a separable $Q$-algebra $A$. Assume $\Lambda$
of finite lattice representation type. Then the Cantor-Bendixson rank of
$\Zg_\Lambda^{tf}$ is 1, and
\begin{itemize}
\item the isolated points are the indecomposable
$\widehat{\Lambda_P}$-lattices, where $P$ ranges over maximal ideals of $R$,
\item the points of Cantor-Bendixson rank 1 are the simple $A$-modules.
\end{itemize}
\end{theorem}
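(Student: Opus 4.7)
The strategy is to reduce to the complete local case and then assemble via Theorem \ref{spectrum} and Theorem \ref{top}. First I would verify that $\widehat{\Lambda_P}$ is of finite lattice representation type for every maximal ideal $P$ of $R$: when $\widehat{\Lambda_P}$ is maximal, this follows from the remarks just before Corollary \ref{md4} (it is hereditary, its lattices are projective, and Krull--Schmidt leaves only finitely many indecomposables); for the finitely many $P$ at which $\widehat{\Lambda_P}$ is non-maximal, I would combine \cite[Theorem 33.2]{CR}, which passes finite lattice representation type from $\Lambda$ to $\Lambda_P$, with Maranda's theorem \cite[30.14]{CR} to lift it to $\widehat{\Lambda_P}$.

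Then Proposition \ref{flt1} applies to each $\widehat{\Lambda_P}$ (legitimately, since $\widehat{R_P}$ is a complete discrete valuation domain and $\widehat{A}$ is semisimple by separability of $A$), and gives that $\Zg_{\widehat{\Lambda_P}}^{tf}$ consists of finitely many indecomposable $\widehat{\Lambda_P}$-lattices, all isolated by Lemma \ref{lattiso}, together with finitely many simple $\widehat{A}$-modules. By Theorem \ref{spectrum},
\[
\Zg_\Lambda^{tf} \;=\; \Zg_A \,\cup\, \bigcup_{P} \Zg_{\widehat{\Lambda_P}}^{tf+red},
\]
and Theorem \ref{top} asserts that each reduced open piece $\Zg_{\widehat{\Lambda_P}}^{tf+red}$ carries the same topology whether viewed inside $\Zg_\Lambda^{tf}$ or inside $\Zg_{\widehat{\Lambda_P}}^{tf}$. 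Since simple $\widehat{A}$-modules are $\widehat{R_P}$-divisible, the reduced part of $\Zg_{\widehat{\Lambda_P}}^{tf}$ is precisely the finite set of indecomposable $\widehat{\Lambda_P}$-lattices; so $\Zg_{\widehat{\Lambda_P}}^{tf+red}$ is a finite discrete open subset of $\Zg_\Lambda^{tf}$, whence every indecomposable $\widehat{\Lambda_P}$-lattice is isolated in $\Zg_\Lambda^{tf}$.

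It remains to identify the Cantor--Bendixson rank $1$ points with the simple $A$-modules -- which, by semisimplicity of $A$, form exactly $\Zg_A$ -- and to confirm that no simple $A$-module is isolated (the overall Cantor--Bendixson rank of $\Zg_\Lambda^{tf}$ being known to be $1$ from the discussion immediately preceding the theorem). To show that an arbitrary simple $A$-module $S$ is a genuine limit point, I would introduce $L := \Lambda \cap S \subseteq A$: this is a $\Lambda$-submodule of $\Lambda$, nonzero because $Q\Lambda = A$ forces every element of $S$ to admit a nonzero $R$-multiple in $\Lambda$, finitely generated and torsion-free over $R$, hence a $\Lambda$-lattice with $QL = S$; simplicity of $S$ and Fact \ref{oplus} force $L$ to be indecomposable. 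Fix any maximal ideal $P$ and decompose $\widehat{L_P} = \bigoplus_{j} M_j$ into indecomposable $\widehat{\Lambda_P}$-lattices, each an isolated point of $\Zg_\Lambda^{tf}$ by the previous step. Viewing $QM_j$ as an $A$-submodule of $Q\widehat{L_P} \cong \widehat{Q}\otimes_Q S$, which is a direct sum of copies of $S$ as an $A$-module, I would conclude that $QM_j$ is a nonzero semisimple $A$-module whose only simple summand is $S$; Lemma \ref{close1} then places $S$ in the closure of $M_j$, so $S$ is not isolated.

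The main obstacle I expect is exactly this last point: producing, for each simple $A$-module $S$, an isolated indecomposable $\widehat{\Lambda_P}$-lattice in whose closure $S$ sits. The delicate feature is that the natural rational hull $Q\widehat{L_P}$ carries an $\widehat{A}$-module structure while $S$ lives over the smaller ring $A$, so the summand-of-$S$ decomposition of $QM_j$ has to be read off the $A$-module structure rather than the $\widehat{A}$-module structure.
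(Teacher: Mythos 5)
Your proof is correct and takes essentially the same route as the paper, which at this point only says ``on this basis it is easy to deduce'' and leaves the assembly implicit. You reduce to the complete local case exactly as the paper does (via the finitely many non-maximal primes, Proposition \ref{flt1} and Theorem \ref{top}), and, helpfully, you make explicit the one step the paper leaves unaddressed, namely that no simple $A$-module is isolated. Your construction $L = \Lambda \cap S$ is sound: $L$ is a full $\Lambda$-lattice in $S$, each indecomposable summand $M_j$ of $\widehat{L_P}$ has $QM_j$ isotypic of type $S$ (because $Q\widehat{L_P}\cong\widehat{Q}\otimes_Q S$ is $S$-isotypic as an $A$-module and $A$-submodules of isotypic modules stay isotypic), and Lemma \ref{close1} then puts $S$ in the closure of $M_j$. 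One small simplification: you could skip constructing $L$ and just take $\widehat{\Lambda_P}=\bigoplus_j M_j$; since $S$ is a summand of $Q\widehat{\Lambda_P}\cong\widehat{Q}\otimes_Q A$ as an $A$-module, it is a summand of some $QM_j$ and Lemma \ref{close1} gives the same conclusion. Also, be aware that \cite[Theorem~33.2]{CR} is usually stated as an equivalence between $\Lambda$ and all $\widehat{\Lambda_P}$ directly, so you may not need a separate Maranda-type lifting from $\Lambda_P$ to $\widehat{\Lambda_P}$.
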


Let us give some examples illustrating the previous results. The first, over a complete
discrete valuation domain $R$, was proposed by Gena Puninski. Indeed it is one of
the last suggestions he left to us. So we like to mention it as a tribute to his
memory.

\begin{exam}\label{final}
Let $R$ be as just said, $\pi$ be a generator of its maximal ideal.
Let $\Lambda = \bsm R & R\\ \pi^2R & R \esm$ (see
\cite[p. 779]{CR}. Also, let $e_1$, $e_2$ denote for simplicity the
idempotents $\bsm 1 & 0\\ 0 & 0 \esm$ and $\bsm 0 & 0\\ 0 & 1 \esm$, respectively.
It is well known that $\Lambda$ has finite lattice representation type. In fact $\Lambda$ is
Gorenstein, i.e.
projective modules $P_1= e_1 \Lambda = \bsm R & R\\ 0 & 0 \esm$ (so basically
$(R, R)$) and $P_2 = e_2 \Lambda = \bsm 0 & 0 \\ \pi^2R & R \esm$ (hence  $(\pi^2, R)$)
are injective (in the category of lattices). The only remaining indecomposable lattice is $P= (\pi R, R)$
(note that $\bsm \pi R & R\\ 0 & 0 \esm$ and $\bsm 0 & 0 \\ \pi R & R \esm$ are isomorphic
as $\Lambda$-modules).

Hence a description of $\Zg^{tf}_\Lambda$ follows from Proposition \ref{flt1}. Anyway
let us follow Gena's approach for the reasons we said.

First of all, note that, being Gorenstein, $\Lambda$ admits a unique overorder
$\Lambda'= \bsm R& R\\ \pi R & R\esm$ which is hereditary;
and $P$ is defined over $\Lambda'$, i.e. $\Lambda'$ is the ring of definable scalars of $P$.
Furthermore the following is the $\AR$-quiver of $\Lambda$

\begin{center}
$
\xymatrix@C=14pt@R=8pt{%
&*+{_{P_1}}\ar[dr]^{\pi}&\\
*+{_P}\ar[ur]^{\subset}\ar[dr]_{\pi}&&*+{_P}\\
&*+{_{P_2}}\ar[ur]_{\subset}&\\
}
$
\end{center}

\noindent
where $\pi$ denotes the multiplication by $\pi$. From that we can see irreducible
morphisms in the category of lattices and the unique almost split sequence:
$$
0\to P\xr{(i, \pi)} P_1\oplus P_2 \xr{\bsm\pi\\-i\esm} P\to 0\, ,
$$
where $i$ denotes inclusion. In detail the two intermediate morphisms act as follows:
\begin{itemize}
\item for all $a, b \in R$, $(i, \pi)$ maps $(\pi a, b)$ to $( (\pi a, b), (\pi^2 a, \pi b))$,
\item for all $a', b', c', d' \in R$, $\bsm \pi \\ -i \esm$ sends
$((a', b'), (\pi^2 c', d'))$ to $(\pi a' - \pi^2 c', \pi b' - d')$.
\end{itemize}

Let $N$ be an indecomposable $R$-torsionfree pure injective
$\Lambda$-module. First suppose that there exists $0\neq n\in Ne_1$.
Hence look at pointed indecomposable lattices $(M,m)$
such that $m\in M e_1$. Up to equivalence (of types realized by $m$)
here is a complete list of them:
\begin{itemize}
\item $(P_1, (\pi^k, 0))$, $\ k\geq 0$,
\item $(P_2, (\pi^l, 0))$,  $l\geq 2$,
\item $(P, (\pi^m,0))$, $m\geq 1$.
\end{itemize}

Furthermore the following is the pattern of the module $(P_1,0)$, i.e., the poset of morphisms from $P_1$ to indecomposable lattices (see \cite{Pu} for a definition).
Here we use an \lq \lq exponential" notation: for instance $(P_1, k)$ abbreviates
$(P_1, (\pi^k,0))$.

\vspace{2mm}

\begin{center}
$
\xymatrix@C=10pt@R=8pt{%
*+{_{(P_1,0)}}\ar[dr]^{\pi}&&\\
&*+{_{(P,1)}}\ar[dl]^(.4){\supset}\ar[dr]^{\pi}&&\\
*+{_{(P_1,1)}}\ar[dr]^{\pi}&&*+{_{(P_2,2)}}\ar[dl]^(.4){\supset}\\
&*+{_{(P,2)}}\ar[dl]_{\supset}\ar[dr]^{\pi}&\\
*+{_{(P_1,2)}}\ar[dr]^{\pi}&&*+{_{(P_2,3)}}\ar[dl]^(.4){\supset}\\
&*+{_{(P,3)}}\ar@{}+<0pt,-10pt>*{.}\ar@{}+<0pt,-16pt>*{.}\ar@{}+<0pt,-22pt>*{.}&\\
}
$
\end{center}

\vspace{3mm}

We easily derive that there is a unique (critical over zero) indecomposable non finitely generated
type $p$ in the interval
$[x=0, \, e_1\mid x]$ in $pp^1_\Lambda (\Tf_\Lambda)$.
Furthermore $p$ is realized by $(1,0)\in (Q,Q)= S$, the simple module over $A= M_2(Q)$.
Thus in this case $N\cong P_i, P$ or $N\cong S$.

It remains to consider the case when there exists $0\neq n\in Ne_2$. Again look at indecomposable pointed lattices
$(M,m)$ where $m\in Me_2$. They form the following pattern, where $(P_1, k)$ now
abbreviates $(P_1, (0,\pi^k))$, and so on.

\vspace{2mm}

\begin{center}
$
\xymatrix@C=10pt@R=8pt{%
&&*+{_{(P_2,0)}}\ar[dl]_{\supset}\\
&*+{_{(P,0)}}\ar[dl]_{\supset}\ar[dr]^{\pi}&\\
*+{_{(P_1,0)}}\ar[dr]^{\pi}&&*+{_{(P_2,1)}}\ar[dl]^(.4){\supset}\\
&*+{_{(P,1)}}\ar[dl]_{\supset}\ar[dr]^{\pi}&\\
*+{_{(P_1,1)}}\ar[dr]^{\pi}&&*+{_{(P_2,2)}}\ar[dl]^(.4){\supset}\\
&*+{_{(P,2)}}\ar@{}+<0pt,-10pt>*{.}\ar@{}+<0pt,-16pt>*{.}\ar@{}+<0pt,-22pt>*{.}&\\
}
$
\end{center}

\vspace{3mm}

Then there is a unique indecomposable non finitely generated
type $q$ in the interval $[x=0, e_2\mid x]$ which is realized as
$(0,1)\in (Q,Q)= S$. Again we conclude that $N\cong P_i, P$ or $N\cong S$.

This completes the description of the $R$-torsionfree part of the Ziegler spectrum of our ring.
\end{exam}

\begin{exam}\label{pcycl}
The second example concerns an integral group ring $\Z \, C(p)$ with $p$ a
prime. This is of finite lattice representation type (see \cite[33A, in particular
p. 690, and 34B]{CR}), whence Theorem \ref{end} applies. Here is an explicit
description of the torsionfree part of the Ziegler spectrum. Let $g$ denote
a generator of $C(p)$ and $\zeta_p$ a primitive $p$-th root of 1 in $\C$, so
a root of the cyclotomic polynomial $\Phi_p(t) = t^{p-1} + t^{p-2} + \ldots + t + 1 \in \Z[t]$.
Incidentally, let $\Phi_1 (t) = t - 1$, whence $t^p - 1 = \Phi_1 (t) \cdot \Phi_p(t)$.
Also, let $e_1 = {1 \over p} \Phi_p (g)$ and $e_2 = 1 - e_1$ be the primitive idempotents
of the algebra $\Q \, C(p)$. Thus the points of $\Zg^{tf}_{\Z \, C(p)}$ are the following.
\begin{itemize}
\item First of all, three isolated lattices over $\Lambda = \widehat{\Z_p} \, C(p)$, i.e.,
$\Lambda e_i$, $i = 1, 2$, and $\Lambda$ itself, in other words $\widehat{\Z_p}$,
$\widehat{\Z_p} (\zeta_p)$ and $\widehat{\Z_p} \, C(p)$. In the first two
cases $g$ acts as the identity and the multiplication by $\zeta_p$, respectively.
Moreover $\widehat{\Z_p} \, C(p)$ corresponds to the pullback
of $\widehat{\Z_p}_p$ and $\widehat{\Z_p} (\zeta_p)$ via the projections onto $\Z/ p \Z$
sending 1 and $\zeta_p$ into $1 + p \Z$ (see \cite{Le}).
\item Next, for every prime $q \neq p$, two more isolated points, $\widehat{\Z_q}$
and $\widehat{\Z_q}  (\zeta_p)$ respectively, as now $\widehat{\Z_q} \, C(p)$ is their direct sum.
\item Finally, two more points of Cantor-Bendixson rank 1, $\Q$ and $\Q(\zeta_p)$, as $\Q \,
C(p)$ is again their direct sum.
\end{itemize}
The topology is also easy to describe.
\begin{itemize}
\item In fact $\Lambda e_1 = \widehat{\Z_p}$, $\Lambda e_2 = \widehat{\Z_p} \, (\zeta_p)$
and $\Lambda = \widehat{\Z_p} \, C(p)$  are the only points in the basic open set $(x = x / p \mid x)$,
and indeed can be isolated, and separated from each other, as follows: $\Lambda$ by
$(\sum_{j < p} g^j  \mid x \, / \, p \mid x)$ where $\sum_{j < p} g^j = p e_1$ (see
\cite[end of p. 57]{Bu1}) and $\Lambda e_1$, $\Lambda e_2$ by
$(e_1 \mid x \, / \, p e_1 \mid x)$, $(e_2 \mid x \, / \, (1 - \zeta_p) e_2 \mid x)$ respectively
(see \cite[proof of 3.3(a)]{Bu1}). Note that, properly speaking, $e_1$ and $e_2$ are not in
$\Z \, C(p)$. However, as we are working in a $\Z$-torsionfree framework we can use here
the simple trick of multiplying every involved scalar by $p$ and expressing the previous
open sets as $(p e_1 \mid x \, / \, p^2 e_1 \mid x)$ and $(p e_2 \mid x \, / \, p (1 - \zeta_p) e_2
\mid x)$.
\item For every prime $q \neq p$, $\widehat{\Z_q}$ and $\widehat{\Z_q}  (\zeta_p)$ are
isolated from the other points by $(x=x \, / \, q \mid x)$ and indeed separated from each other,
and hence isolated at all, by
$((1+g+ \ldots + g^{p-1}) \mid x \, / \, q \mid x )$  and $( (1-g) \mid x \, / \, q \mid x)$, respectively.
\item $\Q$ and $\Q(\zeta_p)$, that is, $\Q \Lambda e_1$ and $\Q \Lambda e_2$, are the points
of Cantor-Bendixson rank 1 and at this level can be separated from each other, for instance, by
$(x (1 - e_1) p = 0 \, /  \, x = 0)$ and $(x (1 - e_2) p = 0 \, /  \, x = 0)$, respectively.

\end{itemize}
\end{exam}

\begin{exam}\label{p2cycl}
Finally let us deal with the integral group ring $\Z \, C(p^2)$ with $p$ a prime. This is
again a $\Z$-order of finite lattice representation type. A description of $\Zg^{tf}_{\Z \, C(p^2)}$,
both points and topology, can be extracted from the classification of lattices over $\widehat{\Z_p} C(p^2)$ given in \cite[34C p. 730]{CR} in terms of extension groups, or in \cite{Ka} in terms of pullbacks, or also in
\cite[$\S$ 4]{Bu1}. We follow this third approach. Let $g$ still denote a generator of the
group $C(p^2)$, $e_1$, $e_2$, $e_3$ be the primitive idempotents of the algebra $\Q \, C(p^2)$.
Thus
$$
e_1 = {1 \over {p^2}} \sum_{j< p^2} g^j = {1 \over {p^2}} \Phi_p (g) \Phi_p (g^p),
$$
$$
e_2 = {1 \over {p^2}} (p - \Phi_p(g)) \Phi_p(g^p), \, \, e_3 = {1 \over p} (p - \Phi_p (g^p))
$$
where $\Phi_p(t^p) = \Phi_{p^2} (t)$ is the cyclotomic polynomial of order $p^2$. Then the points
of $\Zg^{tf}_{\Z \, C(p^2)}$ are the following.
\begin{itemize}
\item Let us start this time from simple $\Q \, C(p^2)$-modules, that is, from points of Cantor-Bendixson rank 1. They are $\Q$, $\Q(\zeta_p)$ and $\Q (\zeta_{p^2})$ where $\zeta_p$ and $\zeta_{p^2}$ are primitive roots of 1
of order $p$, $p^2$ respectively.
\item When $q$ is a prime different from $p$, $\widehat{\Z_q} \, C(p^2)$-lattices admit a similar description.
\item Hence let us focus on $\Lambda = \widehat{\Z_p} \, C(p^2)$. Indecomposable lattices are now $4p+1$.
The first four are $\Lambda$ itself and the $\Lambda e_i$, $i = 1, 2, 3$, that is, $\widehat{\Z_p} \, C(p^2)$ and then
$\widehat{\Z_p}$, $\widehat{\Z_p} (\zeta_p)$ and $\widehat{\Z_p} (\zeta_{p^2})$. In the three last cases
$g$ acts as the multiplication by 1, $\zeta_p$ and $\zeta_{p^2}$ respectively. The remaining $4p - 3$
correspond via the representation equivalence described in \cite[$\S \S$ 3 and 4]{Bu1} to the indecomposable
objects in the category of finite dimensional $\Z / p \Z$-representations of the directed Dynkin diagram $D_{2p}$.
These can be viewed as tuples $(W, (W_{0h}^s)_{0 < s < p, h = 1, 2}, W_1, W_2)$, where
\begin{enumerate}
\item $W$ is a vector space over $\Z / p\Z$,
\item $W_0 = W_{01}^1$, $W_1$, $W_2$ are subspaces of $W$ such that the sum of
any two of them gives the whole $W$,
\item the $W_{0h}^s$ form an increasing chain of subspaces $W_0 = W_{01}^1 \subseteq W_{02}^1
\subseteq W_{01}^2 \subseteq W_{02}^2 \subseteq \ldots \subseteq W_{01}^{p-1} \subseteq W_{02}^{p-1} = W$.
\end{enumerate}
Moreover Butler's functor uniformly defines in a first order way every such representation in the associated
lattice via quotients of pp-subgroups. Then it suffices for our purposes to list these indecomposable representations of $D_{2p}$. In fact they recursively determine the corresponding lattices as a sort of ID card.
As said, they are $4p-3$. In all of them the dimension of $W$ is either 1 or 2. In the former case, i.e. in dimension 1, we meet
\begin{itemize}
\item[a)] 3 points where $W_0 = W$ (and hence $W_{0h}^s = W$ for every $h$ and $s$) and the pair
$(W_1, W_2)$ is one among $(W,0)$, $(0,  W)$ and $(W,W)$,
\item[b)] $2p-3$ points where $W_1 = W_2 = W$, $W_0 = 0$ and the $W_{0h}^s$ are constantly $0$
before some $s$ and $h$ ($h = 1$ when $s = 1$) and then become equal to $W$.
\end{itemize}
In the 2-dimensional case, we find $2p - 3$ additional points in which $W_0$, $W_1$, $W_2$ are 1-dimensional
subspaces such that $W$ is the sum of any two of them (and so the intersection of any two
of them is 0), and the $W_{0h}^s$ equal $W_0$ before
some $s$ and $h$ ($h = 1$ when $s = 1$) and then coincide with $W$.
\end{itemize}
Next let us see the topology, so how to separate isolated points from each other. The case of primes $q \neq p$
can be handled as for $C(p)$, with slight complications. For instance the open sets
isolating $\widehat{\Z_q}$, $\widehat{\Z_q} (\zeta_p)$ and $\widehat{\Z_q} (\zeta_{p^2})$
are now $((1 + g + \ldots + g^{p-1}) \mid x \land (1 + g^p + \ldots + g^{p(p-1)}) \mid x \, / \,
q \mid x)$,  $((1 - g) \mid x \land (1 + g^p + \ldots + g^{p(p-1)}) \mid x \, / \,
q \mid x)$, $((1 - g) \mid x \land (1 + g + \ldots + g^{p-1}) \mid x \, / \,
q \mid x)$ respectively.  Also the analysis of  simple $\Q C(p^2)$-modules is similar to that of
$C(p)$.

Hence let us deal with $q = p$ and with
indecomposable lattices over $\Lambda = \widehat{\Z_p} \, C(p^2)$, those in $(x = x \, / \, p \mid x)$.

The way to isolate $\Lambda$ and the $\Lambda e_i$ ($i = 1, 2, 3$) is the same as for $C(p)$,
by $(\sum_{j < p^2} g^j \mid x \, / \, p \mid x)$, $(e_1 \mid x \, / \, p e_1 \mid x)$, $(e_2 \mid x \,
/ \, (1 - \zeta_p) e_2 \mid x)$, $(e_3 \mid x \, / \, (1 - \zeta_{p^2}) e_3 \mid x)$ respectively.

The further $4p-3$ points are those in the open set $(p^2 \sum_{1 \leq i \leq 3} e_i \mid x \, / \, p^2 \mid x)$
(see the construction in \cite[$\S$ 3]{Bu1}).
To separate them from each other, we can look at the
associated representations of $D_{2p}$ as abelian structures in their own language, because these representations
are uniformly pp-definable without parameters in the corresponding lattices.  Let us write for simplicity $x \in W_0$, $x \in W_1$ and so on to denote the formulas admitting this interpretation in any given representation. Thus
\begin{itemize}
\item[a)] the first 3 points are isolated by $(x \in W_0 \land x \in W_1 \, / \, x \in W_2)$,
$(x \in W_0 \land x \in W_2 \, / \, x \in W_1)$ and $(x \in W_1 \land x \in W_2 \land x \in W_0 \, / \, x =0)$,
\item[b)] the following $2p-3$ are isolated by $(x \in W_1 \land x \in W_2 \land x \in W_{01}^{s+1} \, /
\, x \in W_{02}^s)$ or $(x \in W_1 \land x \in W_2 \land  x \in W_{02}^s \, / \, x \in W_{01}^s)$ for the
right $s$.
\end{itemize}
Similarly, the last $2p-3$ are isolated by
$( x \in W_{01}^{s+1} \, / \, x \in W_{02}^s + (x \in W_1 \land x \in W_2))$
or
$(x \in W_{02}^s \, / \, x \in W_{01}^s + (x \in W_1 \land x \in W_2 ))$ for the right $s$.

\end{exam}

On this basis, one easily deduces the following:

\begin{theorem}\label{decd}
For every prime $p$, the first order theories of both $\Z$-torsionfree $\Z \, C(p)$-modules
and $\Z$-torsionfree $\Z \, C(p^2)$-modules are decidable.
\end{theorem}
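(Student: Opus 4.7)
The plan is to combine the explicit description of $\Zg^{tf}_{\Z G}$ given in Examples \ref{pcycl} and \ref{p2cycl} with the standard Ziegler-style decidability criterion for theories of modules. By Baur--Monk, every $L_{\Z G}$-sentence is equivalent modulo the module axioms to a Boolean combination of invariants conditions on pp-pairs, and the invariants in the theory of the definable subcategory $\Tf_{\Z G}$ are determined by the values of the pairs $\phi/\psi$ at the points of $\Zg^{tf}_{\Z G}$. Thus decidability of $T^{tf}_{\Z G}$ reduces to exhibiting an algorithm which, on input $\phi,\psi \in \pp^1_{\Z G}$, effectively decides for each $N\in\Zg^{tf}_{\Z G}$ whether $N\in(\phi/\psi)$, and in particular whether $(\phi/\psi)\cap\Zg^{tf}_{\Z G}$ is empty.

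I would then feed in the complete list of points from those examples. Every point of $\Zg^{tf}_{\Z G}$ is one of: a simple $\Q G$-module (namely $\Q$, $\Q(\zeta_p)$, and for $C(p^2)$ also $\Q(\zeta_{p^2})$); for each prime $q\neq p$, a $\widehat{\Z_q} G$-lattice obtained by base change from such a simple module; or an indecomposable $\widehat{\Z_p} G$-lattice, of which there are only finitely many (three for $C(p)$, $4p+1$ for $C(p^2)$, the latter $4p-3$ of these prescribed by Butler's explicit representation-theoretic data). Each such $N$ has a fully effective presentation over a computable principal ideal ring ($\widehat{\Z_q}$ or a finite extension thereof), so $\phi(N)$ and $\psi(N)$ are finitely generated submodules of a finitely presented module, and testing $N\in(\phi/\psi)$ reduces to routine effective linear algebra (Smith normal form, etc.).

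The only real obstacle is uniformity across the infinitely many primes $q\neq p$. To address it, let $S$ be the finite set of primes dividing any integer coefficient appearing in $\phi$ or $\psi$. For $q\notin S\cup\{p\}$, every such coefficient is a unit in $\widehat{\Z_q}$ and $q\nmid|G|$, so $\widehat{\Z_q} G$ is a maximal, hereditary, semisimple order and each of its indecomposable lattices is a pure $\widehat{\Z_q}$-submodule of a simple $\widehat{\Q_q} G$-module, which is in turn the base change of a simple $\Q G$-module already on the list. Using Lemma \ref{ppdefQN} one verifies that, for such $q$, $\phi(N)>\psi(N)$ on a $\widehat{\Z_q} G$-lattice $N$ if and only if $\phi(QN)>\psi(QN)$ on its rational hull; hence no $q\notin S\cup\{p\}$ contributes any new opening of $\phi/\psi$. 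The algorithm therefore only needs to test the finitely many points lying over primes of $S\cup\{p\}$ together with the finitely many simple $\Q G$-modules, each of which is decidable by the previous paragraph. This yields decidability of both $T^{tf}_{\Z C(p)}$ and $T^{tf}_{\Z C(p^2)}$.
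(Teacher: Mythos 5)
Your overall plan is the same as the paper's: plug the explicit description of $\Zg^{tf}_{\Z\,C(p)}$ and $\Zg^{tf}_{\Z\,C(p^2)}$ from Examples \ref{pcycl} and \ref{p2cycl} into the decidability criterion for theories of modules, namely \cite[Theorem 9.4]{Zi} and \cite[Theorem 17.12]{Preb1}, which the paper does with little further comment. The difference is that the paper feeds the criterion the full effective (but infinite) enumeration of indecomposable pure injectives together with an algorithm computing $\phi(N)/\psi(N)$ at each, whereas you try to short-circuit this by reducing to a finite list of points over the primes in $S\cup\{p\}$. That reduction is where the gap lies.

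Your claim that no prime $q\notin S\cup\{p\}$, with $S$ the primes dividing the integer coefficients of $\phi$ and $\psi$, contributes a new opening of $\phi/\psi$ is false. Take $\Lambda=\Z\,C(5)$, $\psi(x)\doteq x=x$ and $\phi(x)\doteq\exists y\,(x=y(2+g))$ with $g$ a generator of $C(5)$. The only integer coefficients are $1$ and $2$, so $S\cup\{p\}=\{2,5\}$. But the point $N=\widehat{\Z_3}$ with trivial $C(5)$-action is an indecomposable reduced $\widehat{\Z_3}\,C(5)$-lattice, hence a point of $\Zg^{tf}_\Lambda$ by Theorem \ref{spectrum}, and $2+g$ acts there as $3$, so $\psi(N)/\phi(N)\cong\Z/3\Z\neq 0$ and $N\in(\psi/\phi)$. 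Meanwhile $(\psi/\phi)$ is closed at the simple modules $\Q$ and $\Q(\zeta_5)$, where $2+g$ acts as the invertible scalars $3$ and $2+\zeta_5$, and, as one readily checks, at every lattice over $q=2$ and $q=5$. So an algorithm that tests only the points over $S\cup\{p\}$ and the simple $\Q G$-modules would wrongly conclude that $(\psi/\phi)$ is empty. The flaw is that Lemma \ref{ppdefQN} only gives $\phi(QN)=Q\phi(N)$, which controls the rank of $\phi(N)$ inside $\psi(N)$ but not the finite-index part, and that finite-index part is exactly what can appear at primes outside your $S$; the coefficients of $\phi,\psi$ are elements of $\Z G$, not of $\Z$, so ``being a unit in $\widehat{\Z_q}$'' is not even the right condition. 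The reduction could in principle be repaired by enlarging $S$ to include all primes dividing suitable norms or determinants of the images of the $\Z G$-scalars of $\phi,\psi$ in the Wedderburn components of $\Q G$, but as written the argument is incorrect, and the paper sidesteps the issue entirely by handing the infinite effective enumeration directly to the cited criterion.
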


In fact, the descriptions of the torsionfree part of the Ziegler spectrum of $\Z \, C(p)$ and $\Z \, C(p^2)$ fit with the conditions of the decidability criterion in \cite[Theorem 9.4]{Zi}and \cite[Theorem
17.12]{Preb1}. In fact both an effective list of indecomposable pure injective modules $N$ (possibly through the related representations of $D_{2p}$) and,
for every such module, an effective list of basic open sets $(\phi / \psi)$ around $N$ were already provided. Furthermore straightforward calculations
recursively determine $\phi(N)/\psi(N)$ from $N$, $\phi$ and $\psi$.

Notice that Theorem \ref{decd} positively solves expectations in
the final lines of \cite{MPT0}. See also \cite{T1}.




\begin{thebibliography}{9}


		
\bibitem{AS82} M. Auslander, S. O. Smal\o, Lattices over orders: finitely presented functors and
preprojective partitions, Trans. Amer. Math. Soc. 273 (1982), 433--446.

\bibitem{Bu} M. Butler, The 2-adic representations of Klein's four group, in: Proc. 2nd
International Conference on the Theory of Groups, Lecture Notes in Mathematics, Vol. 372,
Springer, 1974, 197--203.

\bibitem{Bu1} M. Butler, On the classification of local integral representations of finite abelian
$p$-groups, in: V. Dlab, P. Gabriel (editors), Representations of Algebras, Lecture Notes in Mathematics, Vol. 488, Springer, 1975, 54--71.

\bibitem{CR} C. Curtis, I. Reiner, Methods of Representation Theory with Applications to Finite Groups
and Orders, Vol. 1, Pure and Applied Mathematics, John Wiley and Sons, 1981.

\bibitem{Gre} L. Gregory, Maranda's Theorem for Pure-Injective Modules and Duality, Preprint, 2018.

\bibitem{HP} I. Herzog, V. Puninskaya, The model theory of divisible modules over a domain,
Fund. Applied Math. 2 (1996), 563-594.

\bibitem{Ka} M. Kang, Integral representations of cyclic groups of order $p^2$,
J. Algebra 207 (1998), 82-126


\bibitem{Le} L. Levy, Mixed modules over $Z \, G$, $G$ cyclic of prime order,
and over related Dedekind pullbacks, J. Algebra 71 (1981), 62--114.

\bibitem{MPT0} A. Marcja, M. Prest, C. Toffalori, On the undecidability of some classes of
Abelian-by-finite groups, Ann. Pure Appl. Logic 62 (1993), 167--173.

\bibitem{MPT} A. Marcja, M. Prest, C. Toffalori, The torsionfree part of the Ziegler spectrum of $RG$
where $R$ is a Dedekind domain and $G$ is a finite group, J. Symbolic Logic 67 (2002), 1126--1140.


\bibitem{Preb1} M. Prest, Model Theory and Modules, London Mathematical Society Lecture Notes Series, Vol. 130, Cambridge University Press, 1990.

\bibitem{Preb2} M. Prest, Purity, Spectra and Localization,
Encyclopedia of Mathematics and its Applications, Vol. 121, Cambridge University Press, 2009.


\bibitem{Pu} G. Puninski, The Ziegler spectrum and Ringel's quilt of $A$-infinity curve plane
singularity, Algebra Representation Theory (2017), https://doi.org/10.1007/s10468-017-9720-1

\bibitem{PT11} G. Puninski, C. Toffalori, The torsion-free part of the Ziegler spectrum of the Klein
four group, J. Pure Appl. Algebra 215 (2011), 1791--1804.

\bibitem{MO} I. Reiner, Maximal orders, Vol. 28, London Mathematical Society Monographs. New Series, 2003.

\bibitem{RS76} K. W. Roggenkamp, J. W. Schmidt,  Almost split sequences for integral group rings
and orders, Comm. Algebra 4 (1976), 893--917.

\bibitem{Rog95} K.W. Roggenkamp, Almost split sequences for algebras and orders: a unified
approach, Comm. Algebra 23 (1995), 1315--1323.


\bibitem{T1} C. Toffalori, Decidability for $\Z \, G$-modules when $G$ is cyclic of prime
order, Math. Log. Quart. 42 (1996), 369--378.


\bibitem{Zi} M. Ziegler, Model theory of modules, Ann. Pure Appl. Logic 26 (1984), 149--213.

\end{thebibliography}
\end{document}